 \theoremstyle{plain}            
 \newtheorem{theorem}{Theorem}[section]
 \newtheorem*{maintheorem*}{Main Theorem}
 \newtheorem*{maintheorem.}{Main Theorem~1'}
 \newtheorem{proposition}[theorem]{Proposition}
 \newtheorem{lemma}[theorem]{Lemma}
 \newtheorem{corollary}[theorem]{Corollary}
 \theoremstyle{definition}       
 \theoremstyle{remark}
 \newtheorem{remark}[theorem]{Remark}
\newcommand{\eps}{\varepsilon} 
\DeclareMathAlphabet{\Mb}{U}{msb}{m}{n}
  \def\C{\Mb{C}}
  \def\N{\Mb{N}}
  \def\R{\Mb{R}}
  \def\Z{\Mb{Z}}
\newcommand{\vth}{\vartheta}
\def\cc{{\mathcal C}}
\def\cd{{\mathcal D}}
\def\ch{{\mathcal H}}
\def\cl{{\mathcal L}}
\def\cam{{\mathcal M}}
\def\cn{{\mathcal N}}
\def\co{{\mathcal O}}
\def\cp{{\mathcal P}}
\def\car{{\mathcal R}}
\def\cs{{\mathcal S}}
\def\cw{{\mathcal W}}
\DeclareMathOperator{\sym}{Sym}
\DeclareMathOperator{\supp}{supp}
\def\uni{u}
\title[MODULAR THEORY IN LOCAL QUANTUM PHYSICS]{MODULAR THEORY FOR THE VON~NEUMANN ALGEBRAS OF LOCAL QUANTUM PHYSICS}
\author{Daniele Guido}
\address{Department of Mathematics, Universit\`a di Roma "Tor Vergata"}
\email{guido@mat.uniroma2.it}
 \date{\today}
\begin{document}
\begin{abstract}

In the first part, the second quantization procedure and the free Bosonic scalar field will be introduced, and the axioms for quantum fields and nets of observable algebras will be discussed.

The second part is mainly devoted to an illustration of the Bisognano-Wichmann theorem for Wightman fields and in the algebraic setting,  with a discussion on the physical meaning of this result.

In the third part some reconstruction theorems based on modular groups will be described, in particular the possibility of constructing an action of the symmetry group of a given theory via modular groups, and the construction of free field algebras via representations of the symmetry group. 

\end{abstract}
\maketitle
\tableofcontents

\section{Introduction}
Since the beginning of modular theory for von~Neumann algebras in the late sixties \cite{ToTa}, the relations with quantum physics became apparent, first through the interpretation of the analyticity properties of the modular automorphism group as a condition for thermodynamical equilibrium due to Haag, Hugenoltz and Winnink \cite{HHK}, and few years later with the theorem of Bisognano and Wichmann \cite{BW1,BW2}, relating the symmetries of quantum field theory on Minkowski space with the modular objects associated with suitable regions.
The mathematical instruments needed to prove and consolidate these relations are the theory of operator algebras together with its connections with the theory of analytic functions in one or more variables and the theory of group representations, plus some geometrical properties of the spacetimes.
In this notes we present  the descriptions of quantum field theory by the G{\aa}rding-Wightman axioms and by the Haag-Kastler axioms together with the common example of the free scalar field, showing in particular how, for the free Bose fields, the local subspaces and the associated modular objects in the one-particle space are connected with the algebras of local observables and the associated modular objects on the Fock space. We refer to \cite{Lledo} for the analogous connection in the case of Fermi fields (CAR algebras). Then we discuss the relations of the modular objects for the algebras of observables of some regions with the symmetry group of the quantum theory, both in the concrete form of the Bisognano-Wichmann theorem for Wightman fields and in the more abstract form of the Borchers theorem. Finally we analyze some reconstruction results in this context, e.g. reconstruct modular objects via symmetries of a net of von~Neumann algebras, reconstruct the symmetries of a net of von~Neumann algebras via modular objects, reconstruct a net of von~Neumann algebras via a representation of the symmmetry group.

\section{Free quantum fields and local quantum theories}\label{Free&Local}

 \subsection{Free scalar field}
 As a first basic example of a quantum field, I will introduce the free quantum scalar field (of mass $m$) on the Minkowski space. This example is ubiquitous, so I  quote here only some basic references, the book of Streater \& Wightman \cite{SW}, chapter X.7 of the book of Reed and Simon \cite{RS2}, and
the papers of Araki \cite{Araki1,Araki3}, by which this presentation has been strongly influenced. Other references will be given below.

\subsubsection{Spacetime and Symmetries}\label{1pSp}

Let me recall that the (four-dimensional) Minkowski space $M^4$ is the real manifold $\R^4$ with the Minkowski pseudometric given by the signature $(+,-,-,-)$, the first component describing the time coordinate, the others the space coordinates. The group of diffeomorphisms of $M^4$ preserving this (pseudo)-metric is the so called Poincar\'e group $\cp$, which is given by the semi-direct product $\cl \times \R^4$ of the Lorentz group $\cl$ with the translations, where $\cl$ is the group of isometries of $M^4$ as a vector space with the (indefinite) inner product $x\cdot y=x_0y_0-x_1y_1-x_2y_2-x_3y_3$. The Lorentz group is a finite dimensional locally compact Lie group. We denote by $\cl_+$ the proper Lorentz group, namely the subgroup of orientation preserving transformations, and by  $\cl^\uparrow$ the orthochronous Lorentz group, namely the subgroup of  time-orientation preserving transformations.  $\cl_+$ is also denoted by $SO(1,3)$.
The intersection $\cl_+^\uparrow=\cl_+\cap \cl^\uparrow$ is the identity component of $\cl$. Subgroups $\cp_+^\uparrow=\cp_+\cap \cp^\uparrow$ of the Poincar\'e group are defined analogously.

Elements of the Lorentz group can be represented as $4\times4$ matrices. Elements of the form $\begin{pmatrix}1&0\\0&V\end{pmatrix}$, where $V$ is an orthogonal transformation on $\R^3$, represent a change of the space coordinates. Elements of the form $$\begin{pmatrix}\cosh\vth&-\sinh\vth&0&0\\
-\sinh\vth&\cosh\vth&0&0\\
0&0&1&0\\
0&0&0&1
\end{pmatrix}\quad,\qquad\vth\in\R,$$
are called {\it boosts}, and relate two reference frames in relative uniform motion. In the example above, the motion is along the axis $x_1$, with velocity $v$ given by $\cosh\vth=(1+v^2/c^2)^{-1/2}$.

Only {\it positive energy representations} of $\cp_+^\uparrow$ will be considered, namely representations for which the joint spectrum of the unitaries implementing the translation group is contained in the closed forward light cone $\overline{V}_+=\{x\in M^4: x\cdot x\geq0, x_0\geq 0\}$. Such condition is also called {\it spectrum condition}.

Positive energy, irreducible representations of the (proper orthochronous) Poin\-car\'e group are labeled by two parameters, the mass $m\geq 0$ and the spin $s\in\N$ (indeed for $m=0$ also ``infinite spin" is allowed).

More precisely, since $\cp_+^\uparrow=\cl_+^\uparrow \times \R^4$, irreducible positive energy representations are constructed as follows: consider the action of  $\cl_+^\uparrow $ on $ \R^4$ and choose an orbit contained in the closed forward light cone with the origin excluded. These orbits are the mass hyperboloids $H_m=\{p\in M^4:p\cdot p = m^2, p_0>0\}$, $m\geq0$. Then choose an irreducible representation of the stabilizer of a point in the given orbit. For $m>0$, the stabilizer is  (isomorphic to) $SO(3)$, the group of orientation preserving rotations; for $m=0$ the stabilizer is (isomorphic to) $E(2)$, the Euclidean group of the plane. Finally use Mackey induction to get a representation of the Poincar\'e group (cf. e.g. \cite{Lip,Simms}).
Since irreducible representations of $SO(3)$ are parametrized by the spin $s=0,1,2,\dots$, irreducible representations of $\cp_+^\uparrow$ are labeled by the pair $(m,s)$.

The irreducible representations of  $E(2)$ are of two kinds: finite dimensional, when the translational part is trivially represented, and infinite-dimensional, otherwise. The first are just representations of the circle group, hence are labeled by the so-called helicity $s\in\Z$. They give rise to the mass zero, helicity $s$ representations. The latter give rise to the so called {\it infinite spin} representations, cf. section \ref{modloc}.

\subsubsection{One-particle space} Let me describe the mass $m$, spin 0 irreducible representation ${\uni}$ of the (proper, orthochronous) Poincar\'e group. It acts on the Hilbert space $L^2(H_m,\ d\Omega_m)$ of square-summable functions on the mass-hyperboloid $H_m$, w.r.t. the Lorentz invariant measure $d\Omega_m$. It is called the one-particle space for the quantum fields describing particles of mass $m$ and spin $s$. The representation ${\uni}$ acts follows:

\begin{equation}\label{PoiAct}
({\uni}(\Lambda,a)\varphi)(p)=e^{ia\cdot p}\varphi(\Lambda^{-1}p), \qquad \Lambda\in \cl^\uparrow_+ ,  \ a\in R^4,
\end{equation}
where elements of $\cp_+^\uparrow$ are written as pairs $(\Lambda,a)$ with $\Lambda\in \cl^\uparrow_+ ,  \ a\in R^4$. Since the measure $d\Omega_m$ is Lorentz invariant, the action is unitary.

Let us notice that, if we wish to extend the representation to the proper Lorentz group, it is sufficient to describe ${\uni}(\gamma,0)$, where $\gamma$ represents the space-time reflection. If  $P$ denotes the four component generator of the translation subgroup, we have ${\uni}(\gamma,0)e^{ia\cdot P}{\uni}(\gamma,0)=e^{-ia\cdot P}$, which implies ${\uni}(\gamma,0)i P{\uni}(\gamma,0)=-i P$. In order to have positive energy, ${\uni}(\gamma,0)$ has to be conjugate-linear. We call (anti)-unitary a representation of $\cp_+$ such that ${\uni}(g)$ is unitary if $g$ is time-preserving and is anti-unitary if $g$ is time-reversing. In quantum field theory, the anti-unitary implementing the reflection $\gamma$ is called the PCT (parity, charge conjugation, time) transformation, and is denoted by $\Theta$.

We shall consider the following real-linear embedding of the space $\cs(M^4,\R)$ of smooth rapidly decreasing functions into the complex Hilbert space $L^2(H_m,\ d\Omega_m)$:
\begin{equation}\label{localization}
f\in\cs(M^4,\R)\to Ef\in L^2(H_m,\ d\Omega_m), Ef(p)=\sqrt{2\pi}\hat{f}(p), p\in H_m.
\end{equation}
$\hat f$ denoting the Fourier transform (up to a sign). 
Proposition below follows by a direct computation.

\begin{proposition}\label{poincareaction}
Setting $f_{(\Lambda,a)}(x)=f(\Lambda^{-1}(x-a))$, $\Lambda\in \cl^\uparrow_+, a\in\R^4$, we have
\begin{equation}\label{cov1}
Ef_{(\Lambda,a)}={\uni}(\Lambda,a)Ef.
\end{equation}
\end{proposition}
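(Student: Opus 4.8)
The plan is to reduce \eqref{cov1} to an elementary change of variables in the Fourier integral, using only that each $\Lambda\in\cl^\uparrow_+$ is a linear isometry of the Minkowski form which maps $H_m$ onto itself, together with the fact that the sign convention in the Fourier transform $f\mapsto\hat f$ is the one for which translating the argument of $f$ by $a$ produces the factor $e^{ia\cdot p}$.

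First I would write $\widehat{f_{(\Lambda,a)}}(p)$ as a constant times $\int_{M^4} f(\Lambda^{-1}(x-a))\,e^{ip\cdot x}\,dx$ and substitute $y=\Lambda^{-1}(x-a)$, i.e. $x=\Lambda y+a$. Since $\Lambda$ preserves a nondegenerate bilinear form, $|\det\Lambda|=1$, so Lebesgue measure is invariant and $dx=dy$; and since $\Lambda$ is a Lorentz transformation, $p\cdot(\Lambda y)=(\Lambda^{-1}p)\cdot y$, whence $p\cdot x=(\Lambda^{-1}p)\cdot y+a\cdot p$. This yields $\widehat{f_{(\Lambda,a)}}(p)=e^{ia\cdot p}\,\widehat f(\Lambda^{-1}p)$ for all $p\in M^4$. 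Restricting to $p\in H_m$ and noting that $\Lambda^{-1}p\in H_m$ as well — because $\Lambda$ fixes $p\cdot p=m^2$ and, being orthochronous, preserves the sign of $p_0$ — the identity persists on the mass shell; multiplying by $\sqrt{2\pi}$ and using \eqref{localization} gives $Ef_{(\Lambda,a)}(p)=e^{ia\cdot p}(Ef)(\Lambda^{-1}p)$.

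Comparing this with \eqref{PoiAct}, the right-hand side is precisely $({\uni}(\Lambda,a)Ef)(p)$, which is \eqref{cov1}. There is no genuine obstacle here; the only two points deserving a word of care are the invariance of Lebesgue measure under $x\mapsto\Lambda x+a$ and the adjunction $p\cdot(\Lambda y)=(\Lambda^{-1}p)\cdot y$, both immediate from $\Lambda$ being a Lorentz transformation, together with the observation that $E$ being merely real-linear is irrelevant, since \eqref{cov1} is an equality of two explicitly given vectors.
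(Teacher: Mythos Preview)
Your proof is correct and is precisely the ``direct computation'' the paper alludes to (the paper does not spell out the argument beyond that phrase). The change of variables $y=\Lambda^{-1}(x-a)$ together with $|\det\Lambda|=1$ and the Lorentz adjunction $p\cdot(\Lambda y)=(\Lambda^{-1}p)\cdot y$ is exactly what is meant, and your remark that $\Lambda^{-1}$ preserves $H_m$ because $\Lambda$ is orthochronous is the one point worth making explicit.
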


\begin{remark}
Equation (\ref{cov1}) can be used as a prescription for the definition of  ${\uni}(\gamma,0)$. A straightforward computation gives ${\uni}(\gamma,0)\varphi=\overline\varphi$.
\end{remark}

\subsubsection{Local structure of the one-particle space} \label{locstruc}
Now, for any bounded open region $\co\subset M^4$, let us define the corresponding local space as the real-linear closed vector space given by
\begin{equation}\label{localspaces}
K(\co)=\{Ef:f\in\cs(M^4,\R), {\rm supp\ }f\subset\co\}^-.
\end{equation}
By Proposition \ref{poincareaction}, $K(g\co)={\uni}(g)K(\co)$, for any $g\in\cp^\uparrow_+$.

For unbounded regions $\cc$, we set $K(\cc)=\bigvee_{\co\subset\cc}K(\co)$, $\vee$ denoting the generated real-linear closed space.

For any open region $\co\subset M^4$, we consider its (open) space-like complement $\co'$, with
\begin{equation}\label{slc}
\co'=\{x\in M^4: (x-y)^2 < 0, y\in\co\}.
\end{equation}

For any real-linear closed subspace $K\subset  L^2(H_m,\ d\Omega_m)$, we set
\begin{equation}\label{symplectic complement}
K'=\{h\in L^2(H_m,\ d\Omega_m): \Im(h,k)=0, k\in K\}.
\end{equation}

The following theorem has been proven by Araki \cite{Araki3}

\begin{theorem}\label{lattices1}
For any bounded open nonempty simply connected region $\co$ with regular boundary we have
\item{$(i)$}   $K(\co')=K(\co)'.$
\item{$(ii)$}  $K(\co)\cap iK(\co)=\{0\}.$
\item{$(iii)$} $K(\co)+ iK(\co)$ is dense.
\end{theorem}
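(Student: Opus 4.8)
The plan is to deal with the four inclusions in increasing order of difficulty; the point is that only the nontrivial half of $(i)$ is really hard. First I record the basic locality computation. For real $f,g$ one has $(Ef,Eg)=2\pi\int_{H_m}\overline{\hat f(p)}\,\hat g(p)\,d\Omega_m(p)$ and $\overline{\hat f(p)}=\hat f(-p)$, so a short calculation gives $\Im(Ef,Eg)=\mathrm{const}\cdot\int\!\!\int f(x)\,\Delta(x-y)\,g(y)\,dx\,dy$, where $\Delta(z)=\mathrm{const}\cdot\int_{H_m}\bigl(e^{-ip\cdot z}-e^{ip\cdot z}\bigr)\,d\Omega_m(p)$ is the Pauli--Jordan commutator distribution. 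Since $\Delta$ is Lorentz invariant and odd, and every spacelike vector is conjugate to its negative under $\cl^\uparrow_+$, $\Delta$ vanishes on spacelike vectors; hence $\Im(Ef,Eg)=0$ whenever $\supp f$ and $\supp g$ are mutually spacelike. Passing to real-linear closed spans, this gives the easy inclusion of $(i)$, namely $K(\co')\subseteq K(\co)'$, for \emph{arbitrary} open $\co$.

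Next I would prove $(iii)$, which in fact holds for any nonempty open $\co$. If $h\perp_{\C}\bigl(K(\co)+iK(\co)\bigr)$, then $(h,Ef)=0$ for every real $f$ supported in $\co$, which says exactly that the distributional boundary value $F$ of $G(z)=\int_{H_m}\overline{h(p)}\,e^{-ip\cdot z}\,d\Omega_m(p)$ vanishes on $\co$; here $G$ is holomorphic on the backward tube $\{z\in\C^4:\Im z\in -V_+\}$ ($V_+$ the open forward light cone) and has a tempered boundary value because $h\in L^2(H_m,d\Omega_m)$ and $p\cdot y<0$ for $p\in H_m$, $y\in-V_+$. By the edge-of-the-wedge theorem (cf.\ \cite{SW}) applied with the zero function on the forward tube, $F$ vanishing on a nonempty open set forces $G\equiv0$; since $F$ is the Fourier transform of the tempered distribution $\overline h\,d\Omega_m$, injectivity gives $\overline h\,d\Omega_m=0$, i.e.\ $h=0$. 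Thus $K(\co)+iK(\co)$ is dense.

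Statement $(ii)$ then follows without further analysis. By the standard duality lemma for closed real subspaces of a complex Hilbert space (Rieffel--van Daele), $K\cap iK=(K'+iK')^{\perp_{\C}}$ — which uses the bipolar identity $K''=K$ for closed real $K$, itself a Hahn--Banach argument — so $K(\co)\cap iK(\co)=\{0\}$ is equivalent to $K(\co)'+iK(\co)'$ being dense. As $\co$ is bounded, $\co'$ is a nonempty open set, hence contains some nonempty bounded open $\cd$; by the easy inclusion of $(i)$ and isotony, $K(\co)'+iK(\co)'\supseteq K(\cd)+iK(\cd)$, which is dense by $(iii)$. Hence $K(\co)\cap iK(\co)=\{0\}$.

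The real work, and the step I expect to be the main obstacle, is the nontrivial inclusion $K(\co)'\subseteq K(\co')$ of $(i)$ — Haag duality in the one-particle space — and it is here that the geometric hypotheses on $\co$ are used. The plan is to pass to Cauchy data: using Poincar\'e covariance $K(g\co)={\uni}(g)K(\co)$ to normalise, and simple-connectedness and boundary regularity to ensure that $\co$ is the domain of dependence of $B:=\co\cap\Sigma$ for a suitable spacelike hyperplane $\Sigma$ while $\co'$ is the domain of dependence of $\R^3\setminus\overline{B}$, one identifies $\bigl(K(\co),\Im(\cdot,\cdot)\bigr)$ with the completion of $\{(q,\pi):q,\pi\in C_c^\infty(B,\R)\}$ carrying the \emph{local} symplectic form $\sigma\bigl((q,\pi),(q',\pi')\bigr)=\int_B(q\pi'-\pi q')$, the identification sending $f$ to the Cauchy data on $\Sigma$ of the Klein--Gordon solution $\Delta*f$. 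What must be proved is that the form $\Im(Ef,Eg)=\mathrm{const}\cdot\int\!\!\int f\,\Delta\,g$ is independent of the Cauchy surface and that this data map has dense range. Granting this, $K(\co)'$ becomes the $\sigma$-complement of the $B$-supported data, which is visibly the closure of the $(\R^3\setminus\overline{B})$-supported data, i.e.\ $K(\co')$; the one delicate point left is the density of $C_c^\infty(\R^3\setminus\overline{B},\R)\oplus C_c^\infty(\R^3\setminus\overline{B},\R)$ in the data supported on $\R^3\setminus B$, which is exactly where regularity of $\partial B$ enters. I expect the surface-independence of the symplectic form, together with this Cauchy-data identification, to be the technical heart of the argument.
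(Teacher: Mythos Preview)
The paper does not give its own proof of this theorem: it is stated with the attribution ``The following theorem has been proven by Araki \cite{Araki3}'' and no argument is supplied. So there is nothing in the paper to compare your sketch against line by line; what one can do is compare it with Araki's strategy, which your outline in fact reproduces rather faithfully. The easy inclusion of $(i)$ via the support properties of the Pauli--Jordan distribution, the Reeh--Schlieder argument for $(iii)$ via analyticity in the tube and edge-of-the-wedge, and the derivation of $(ii)$ from $(iii)$ through the real-subspace identity $K\cap iK=(K'+iK')^{\perp_{\C}}$ are all correct and standard. Your identification of the hard half of $(i)$ as the genuine content, to be attacked by passing to time-zero Cauchy data and exploiting the locality of the symplectic form there, is exactly Araki's route.

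One caution on the geometric step: the sentence ``simple-connectedness and boundary regularity \dots\ ensure that $\co$ is the domain of dependence of $B:=\co\cap\Sigma$'' is not right as stated. A bounded simply connected open set with smooth boundary need not be the domain of dependence of its intersection with any spacelike hyperplane (think of a Euclidean ball). What actually happens in Araki's argument is that the map $E$ kills $(\Box+m^2)\cs$, so $K(\co)$ depends only on the equivalence class of test functions modulo this image; the propagation lemma then lets one replace $f$ by a $g$ supported in an arbitrarily thin slab about $\Sigma$, with $\supp g$ contained in the causal shadow of $\supp f$. This yields an identification of $K(\co)$ with a time-zero space $K_0(B)$ for the appropriate $B\subset\R^3$, and it is for the \emph{time-zero} net $B\mapsto K_0(B)$ that duality is proved directly, using that the symplectic form is the local pairing $\int(q\pi'-\pi q')$. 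The regularity hypothesis on $\partial\co$ is used at this last stage, to ensure that data supported on $\R^3\setminus B$ can be approximated by data supported on $\R^3\setminus\overline{B}$. You clearly see where the difficulty lies and what machinery resolves it; just be careful not to conflate ``simply connected with regular boundary'' with ``is a domain of dependence''.
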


\subsubsection{Tomita-Takesaki theorem for real subspaces}
A closed real subspace $K$ satisfying properties $(ii)$ and $(iii)$ of Theorem \ref{lattices1} is called {\it standard}. Given a standard subspace $K$, let us consider the following operator:
\begin{equation}\label{1pT}
\begin{matrix}
s_K:&K+iK&\to& K+iK\\
        &h+i k&\mapsto & h-i k
\end{matrix}
\end{equation}

\begin{proposition}\label{1pTTthm} There is a one-to-one correspondence between standard subspaces and closed, anti-linear operators $s$ satisfying $\cd(s)=Rg(s)$ and $s^2=1$.
Let now $K$ be a standard subspace, $s_K$ be as above, $s_K=j_K\delta_K^{1/2}$ be the polar decomposition. We have:
\item{$(i)$}  $s_{K'}=s_K^*$ and $K\cap K'=\{x:j_Kx=x\ \&\ \delta_K x=x\}$.
\item{$(ii)$} $j_K K=K',\quad \delta_K^{it}K=K.$
\end{proposition}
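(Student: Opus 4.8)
The plan is to regard $s_K$ as the one-particle analogue of Tomita's $S$-operator, with the symplectic complement $K'$ playing the role of the commutant, and to run the Tomita--Takesaki argument in this setting; write $\ch$ for the ambient complex Hilbert space. First I would verify that, for standard $K$, the prescription $h+ik\mapsto h-ik$ really defines a closed, densely defined, anti-linear operator $s_K$ with $\cd(s_K)=Rg(s_K)=K+iK$ and $s_K^2=1$: well-definedness of the map is precisely property $(ii)$ of standardness, density of the domain is $(iii)$, anti-linearity is a one-line check on a scalar $a+ib$, $s_K^2=1$ is obvious, and closedness holds because if $h_n+ik_n\to\zeta$ and $h_n-ik_n\to\omega$ with $h_n,k_n\in K$ then $h_n\to\tfrac12(\zeta+\omega)$ and $k_n\to-\tfrac i2(\zeta-\omega)$, both limits lying in the closed space $K$. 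Conversely, to a closed anti-linear $s$ with $\cd(s)=Rg(s)$, $s^2=1$ (and dense domain) I would attach $K_s=\ker(s-1)$; then $iK_s=\ker(s+1)$ by anti-linearity, $K_s$ is closed, $K_s\cap iK_s=\{0\}$, and $K_s+iK_s=\cd(s)$ via $x=\tfrac12(x+sx)+\tfrac12(x-sx)$, so $K_s$ is standard and $s=s_{K_s}$. Since also $\ker(s_K-1)=K$, the two assignments are mutually inverse.

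For part $(i)$, the heart of the matter is the operator identity $s_K^*=s_{K'}$. For $\eta=u+iv$ with $u,v\in K'$ and $\xi=h+ik\in K+iK$, expanding $(s_K\xi,\eta)$ and $(s_{K'}\eta,\xi)$ one sees they coincide, the key point being that the four scalars $(h,u),(h,v),(k,u),(k,v)$ are all \emph{real} by the very definition of $K'$; hence $K'+iK'\subseteq\cd(s_K^*)$ and $s_K^*$ agrees with $s_{K'}$ there. For the opposite inclusion, take $\eta\in\cd(s_K^*)$, set $\eta'=s_K^*\eta$, and test the defining identity $(s_K\xi,\eta)=(\eta',\xi)$ against $\xi=h\in K$ (where $s_Kh=h$): this gives $(h,\eta)=(\eta',h)$ for every $h\in K$, so $(h,\eta+\eta')=2\Re(h,\eta)$ is real and $(h,\eta-\eta')=2i\Im(h,\eta)$ is purely imaginary for every $h\in K$; the first says $\eta+\eta'\in K'$, the second says $-i(\eta-\eta')\in K'$, and therefore $\eta=\tfrac12(\eta+\eta')+i\cdot\bigl(\tfrac12(-i)(\eta-\eta')\bigr)\in K'+iK'$. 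Thus $\cd(s_K^*)=K'+iK'$ and $s_K^*=s_{K'}$. Along the way I would record the bipolar-type facts that make $s_{K'}$ available: writing $\Im(x,y)=\langle x,iy\rangle_\R$ shows $K'=(iK)^{\perp_\R}$, and since multiplication by $i$ is a real-orthogonal isometry one gets $K''=K$ together with the equivalences ``$K\cap iK=\{0\}$ iff $K'+iK'$ dense'' and ``$K+iK$ dense iff $K'\cap iK'=\{0\}$''; in particular $K$ standard iff $K'$ standard, so $s_{K'}$ is itself a closed, densely defined anti-linear involution.

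Next I would extract the polar-decomposition relations. Since $s_K$ is injective with dense range, $j_K$ is anti-unitary and $\delta_K=s_K^*s_K$ is injective positive self-adjoint. From $s_K^2=1$ one also gets $(s_K^*)^2=1$, hence $\delta_K^{-1}=s_Ks_K^*=j_K\delta_Kj_K^{-1}$; feeding this into the anti-linear functional calculus $j_Kf(\delta_K)j_K^{-1}=\overline f(\delta_K^{-1})$ yields $j_K\delta_K^{1/2}j_K^{-1}=\delta_K^{-1/2}$ and $j_K\delta_K^{it}j_K^{-1}=\delta_K^{it}$, while comparing $s_K^2$ with $(j_K\delta_K^{1/2})^2$ gives $j_K^2=1$; moreover $s_K^*=\delta_K^{1/2}j_K^{-1}=j_K\delta_K^{-1/2}$, i.e. $j_{K'}=j_K$ and $\delta_{K'}=\delta_K^{-1}$. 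Now for $x\in K$, $j_K\delta_K^{1/2}x=x$ forces $\delta_K^{1/2}x=j_Kx$, and then $s_{K'}(j_Kx)=j_K\delta_K^{-1/2}j_Kx=j_K(\delta_K^{-1/2}j_K)x=j_K(j_K\delta_K^{1/2})x=\delta_K^{1/2}x=j_Kx$, so $j_Kx\in K'$; applying this with $K'$ in place of $K$ (legitimate because $j_{K'}=j_K$ and $K''=K$) gives the reverse inclusion, so $j_KK=K'$. Similarly $s_K(\delta_K^{it}x)=j_K\delta_K^{1/2+it}x=\delta_K^{-1/2+it}j_Kx=\delta_K^{it}\delta_K^{-1/2}(\delta_K^{1/2}x)=\delta_K^{it}x$, hence $\delta_K^{it}K\subseteq K$ for all $t$, whence $\delta_K^{it}K=K$.

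Finally, $x\in K\cap K'$ iff $j_K\delta_K^{1/2}x=x=j_K\delta_K^{-1/2}x$; applying $j_K$ and then $\delta_K^{1/2}$ forces $\delta_Kx=x$, hence $\delta_K^{1/2}x=x$ and then $j_Kx=x$, while the converse is immediate, so $K\cap K'=\{x:j_Kx=x,\ \delta_Kx=x\}$. The one step requiring real care is the operator identity $s_K^*=s_{K'}$ above — in particular the inclusion $\cd(s_K^*)\subseteq K'+iK'$, together with the attendant unbounded-operator bookkeeping (closedness of $s_K$, existence of the anti-linear polar decomposition, and the rule that conjugating a Borel function of $\delta_K$ by the anti-unitary $j_K$ replaces that function by its complex conjugate evaluated at $\delta_K^{-1}$); everything after that is the short algebraic manipulation with $j_K$ and $\delta_K^{1/2}$ familiar from the Tomita--Takesaki theorem, modulo domain checks that are automatic because $\cd(\delta_K^{-1/2})=Rg(\delta_K^{1/2})$ and $\delta_K^{it}$ commutes with $\delta_K^{1/2}$.
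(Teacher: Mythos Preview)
Your proof is correct. The correspondence between standard subspaces and anti-linear involutions, the identification $s_{K'}=s_K^*$, and the description of $K\cap K'$ are handled essentially as in the paper; your sequence argument for closedness of $s_K$ is equivalent to the paper's graph-norm computation $\|k_1+ik_2\|_{G(s_K)}^2=2(\|k_1\|^2+\|k_2\|^2)$.

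The genuine difference is in part $(ii)$. The paper proves $j_KK=K'$ and $\delta_K^{it}K=K$ by a spectral decomposition: it disintegrates $\ch$ into two-dimensional fibers indexed by an ``angle'' operator $\Theta$ (defined via $|\log\delta|=-2\log\tan\tfrac{\Theta}{2}$), writes $j$ and $\delta$ as explicit $2\times 2$ matrices on each fiber, exhibits explicit basis vectors $y_\pm$ for $K$ and $y'_\pm$ for $K'$, and then checks $jy_\pm=y'_\pm$ and $\delta^{it}y_\pm\in\mathrm{span}_\R\{y_+,y_-\}$ by hand. You instead work purely with the operator identities $j_K^2=1$, $j_K\delta_K j_K=\delta_K^{-1}$, $j_Kf(\delta_K)j_K=\overline f(\delta_K^{-1})$ coming from the polar decomposition, and verify directly that $s_{K'}(j_Kx)=j_Kx$ and $s_K(\delta_K^{it}x)=\delta_K^{it}x$ for $x\in K$. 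Your route is shorter and avoids the auxiliary conjugation $C$ and the fiber analysis altogether; the paper's route, on the other hand, is more constructive and yields as a by-product the explicit geometric picture of $K$, $K'$ and the angle between $K$ and $iK$ (which the paper exploits elsewhere, cf.\ the reference to \cite{FiGu2}). Both arguments are standard; yours is closer to the usual Tomita--Takesaki presentation, the paper's closer to the Rieffel--van Daele ``real subspace'' viewpoint.
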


\begin{proof}
Assume $K$ is standard. The operator $s_K$ is clearly well defined, since property  $K\cap iK=\{0\}$ implies that a vector in $K+iK$ can be uniquely decomposed as $k_1+ik_2$, with $k_1,k_2\in K$, and densely defined. A simple computation shows that $\|k_1+ik_2\|^2_{G(s_K)}=2(\|k_1\|^2+\|k_2\|^2)$, namely $(K+iK,\|\cdot\|_{G(s_K)})$ is isomorphic, as a real Hilbert space, to $K\oplus K$, hence is closed. Properties $\cd(s_K)=Rg(s_K)$ and $s_K^2=1$ are now obvious. Conversely, if $s$ has the mentioned properties, any vector $x\in\cd(s)$ can be uniquely decomposed as a sum of an invariant and of an anti-invariant vector for $s$, $x=\frac12(x+sx)+\frac12(x-sx)$, $x$ is invariant {\it iff} $ix$ is anti-invariant, hence setting $K:=\{x:sx=x\}$ we get a closed standard subspace. In the following we drop the subscript $K$ from the operators $s,j,\delta$ when no confusion arises.

$(i)$. From the properties above we get $j^2=1$, $j\delta j=\delta^{-1}$, and $s^*=j\delta^{-1/2}$, hence $\cd(s^*)=Rg(s^*)$ and $(s^*)^2\subset 1$. This implies that,  as for the operator $s$, $s^*$ is determined by its invariant subspace $\{x\in\ch:s^*x=x\}$.

Let us recall that, since $s$ is antilinear, $(sx,y)=(s^*y,x)$.  If $s k=k$ and $s^*h=h$, we have $(k,h)=(sk,h)= (s^*h,k)=(h,k)$, namely $\Im (k,h)=0$. As a consequence, $s_{K'}\supseteq s_K^*$. Conversely, if $k_1, k_2\in K$ and $h_1,h_2\in K'$, one gets, by a straightforward computation, $(h_1 + i h_2 , s(k_1+ i k_2))=(k_1 + i k_2,h_1 - i h_2)$, hence  $h_1 + i h_2\in\cd(s^*_{K})$ and $s^*_{K}(h_1 + i h_2)=h_1 - i h_2$. This prove the equality $s_{K'}=s_K^*$. Then, $x\in K\cap K'$ {\it iff} $s_Kx=x$ and $s_{K}^*x=x$, namely $\delta x=s^*s x=x$, from which $\delta^{1/2} x=x$ and $jx=x$. The converse implication is proved analogously.

$(ii)$. Let me first give the idea of the proof: we may decompose the complex Hilbert space $\ch$ as a direct integral of 2-dimensional spaces (possibly up to the eigenspace $\{x=\delta x\}$, which may be odd-dimensional), in such a way that $K$, $K'$, $s$ and $s^*$ are decomposed accordingly. In any such fiber, the operators $j$ and $\delta$ can be written as
$$
j=\begin{pmatrix}0&C\\C&0\end{pmatrix}\quad
\delta=\begin{pmatrix}\tan^2\frac\vth2&0\\0&\tan^{-2}\frac\vth2\end{pmatrix},
$$
where $C$ denotes the complex conjugation and $\vth\in(0,\pi/2]$, the space $K$ is generated by the vectors 
$$
y_+=\begin{pmatrix}\cos\frac\vth2\\\sin\frac\vth2\end{pmatrix}\quad
y_-=\begin{pmatrix}i\cos\frac\vth2\\-i\sin\frac\vth2\end{pmatrix},
$$
and the space $K'$ is generated by the vectors 
$$
y'_+=\begin{pmatrix}\sin\frac\vth2\\\cos\frac\vth2\end{pmatrix}\quad
y'_-=\begin{pmatrix}i\sin\frac\vth2\\-i\cos\frac\vth2\end{pmatrix}.
$$
Form this one gets $jy_\pm=y'_\pm$, hence $j K=K'$, and $ \delta^{it}y_\pm=\cos[(\log\tan^2\frac\vth2)t] y_\pm \pm \sin[(\log\tan^2\frac\vth2)t] y_\mp$, hence $\delta^{it}K=K$.

The previous argument can be made rigorous as in \cite{GuLo4}, Proposition 1.14, where we are assuming for symplicity that $1\not\in\sigma_p(\delta)$:

Let us choose a selfadjoint antiunitary $C$ commuting with $j$
and $\delta$, and set $U=jC$, so that $U(\log\delta) U=-\log\delta$. Then denote
with $\cl$ the real vector space of $C$-invariant vectors in the
spectral subspace $\{\log\delta>0\}$ and by $\psi^{\pm}$ the maps $\psi^{+}
:y\in \cl\mapsto U\cos\frac\Theta2 y+\sin\frac\Theta2 y$, $\psi^{-}:y\in\cl\mapsto
iU\cos\frac\Theta2 y-i\sin\frac\Theta2 y$, where the operator $\Theta$ is defined by
$|\log\delta|=-2\log\tan\frac\Theta2$, $\sigma(\Theta)\subseteq[0,\pi/2]$.

Since $U$ maps the spectral space $\{\log\delta>0\}$ onto the spectral
space $\{\log\delta<0\}$, both $\psi^{+}$ and $\psi^{-}$ are isometries,
and a simple calculation shows that their ranges are real-orthogonal.
Moreover, decomposing $\ch$ as $\{\log\delta<0\}\oplus\{\log\delta>0\}$, one
can show that any solution of the equation $sx=x$ can be written as a
sum $\psi^{+}(y)+\psi^{-}(z)$, namely the map $\psi^{-}+\psi^{+} :
\cl\oplus_{\R} \cl\to K$ is an isometric isomorphism of real Hilbert
spaces. For a more detailed proof and the relation of $\Theta$ with the angle between $K$ and $iK$, see \cite{FiGu2}.
\end{proof}

\begin{remark}\label{relation}
Let us notice that, if $\car$ is a von~Neumann algebra acting on a Hilbert space $\ch$ with a standard vector $\Omega$, the closure $K$ of the real space $\car_{sa}\Omega$ is standard, and the Tomita operator $S$ coincides with the operator $s_K$ considered above. Compare the statement of the previous theorem with that of the Tomita-Takesaki theorem in this volume \cite{Lledo}
\end{remark}

\subsubsection{Second quantization functor}

 Let $\ch$ be a complex separable Hilbert
space. The {\it symmetric Fock space} over it is
$$e^\ch=\bigoplus^\infty_{n=0}\ch^{\otimes_{\sym} n}$$
where $\ch^{\otimes_{\sym} n}$ is the subspace of the $n-$th tensor product of
$\ch$ which is pointwise invariant under the natural action of the
permutation group. More precisely, $\ch^{\otimes_{\sym}n}=\sym (\ch^{\otimes n})$, where  the orthogonal projection $\sym$ is defined as
$$
\sym (x_1\otimes\dots\otimes x_n):=\frac1{n!}\sum_{\sigma\in P(n)}x_{\sigma(1)}\otimes\dots\otimes x_{\sigma(n)}.
$$
$\ch^{\otimes_{\sym} n}$ is called the $n$-particle space\footnote{The reason for $\sym$ is to the fact that we describe quantum particles (hence indistinguishable particles) obeying Bose-Einstein statistics.}.

The set of {\it coherent} vectors in $e^\ch$ consists of the vectors
$$e^h=\bigoplus^\infty_{n=0}\frac{h^{\otimes n}}{\sqrt{n!}}.$$
This set turns out to be total in $e^\ch$ (see property $(a)$ below).

The first  important class of operators acting on $e^\ch$ is that of {\it second quantization operators}.
For any closed, densely defined, operator $a$ on the one-particle space $\ch$,  we set
$$
e^a=\bigoplus^\infty_{n=0}a^{\otimes n}\ ,
$$
on the linear span of the symmetrized elementary tensors on $\cd(a)$, namely of the vectors $\sym (x_1\otimes\dots\otimes x_n)$, with $x_i\in\cd(a)$, $i=1,\dots, n$, so that $e^a$ is densely defined. Let us observe that $(e^a)^*\supseteq e^{(a^*)}$, and since the latter is densely defined, $e^a$ is indeed closable, cf. \cite{RS1} Theorem VIII.1. In the following, we shall denote its closure with the same symbol $e^a$. 

In the particular case in which  $u$ is unitary on the one-particle space, $e^u$ is unitary on the Fock space. Setting $U(g)=e^{u(g)}$, $g\in\cp_+^\uparrow$, for the representation $u$ of the Poincar\'e group on $\ch$, we get a positive energy representation $U$ on the Fock space.

The second class consists of {\it Weyl unitaries}, which are the range of the map 
$$
h\to W(h)
$$ 
from $\ch$ to the unitaries on $e^\ch$ defined by
\begin{align*}
 W(h)e^0&=\exp\left(-\frac14\|h\|^2\right)
e^{\frac i{\sqrt2}h},\quad h\in\ch\\
W(h)W(k)&=\exp\left(-\frac i2 \Im(h,k)\right)W(h+k)\quad h,k\in\ch
\end{align*}

The vector $e^0=\Omega$ is called {\it vacuum} and the relations in the
last equality  are called Canonical Commutation Relations (CCR). We refer to \cite{Lledo} for the treatment of fields obeying Canonical Anticommutation Relations (CAR). 

Via the preceding equalities $W(h)$ becomes a well defined, isometric and
invertible (with inverse $W(-h)$) operator on the dense set spanned
by coherent vectors, and hence it extends to a unitary on $e^\ch$. Weyl
unitaries generate the so-called {\it second quantization algebras}.
With each closed real linear subspace $K\subset\ch$, a von~Neumann 
algebra $\car(K)$ is associated, defined by 
$$
\car(K)=\{W(h),\quad h\in K\}''.
$$ 

The following theorems give some properties of the second quantization algebras and their modular operators.

\begin{theorem}[\cite{EO}]\label{thm:2qS} A second quantization algebra $\car(K)$ is
in standard form w.r.t. the vacuum if and only if $K$ is standard.
In this case $S=e^s$, $\Delta=e^\delta$, and $J=e^j$, where $S$ is the
Tomita operator of $(\car(K),e^0)$ and $S=J\Delta^{1/2}$, $s=j\delta^{1/2}$ are
the polar decompositions of $S$ and $s$, respectively.
\end{theorem}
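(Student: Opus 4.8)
The plan is to treat separately the characterization of when $\Omega$ is cyclic and separating and, assuming $K$ standard, the identification of the modular data.

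\textit{The equivalence.} For cyclicity I would argue that $\Omega$ is cyclic for $\car(K)$ iff it is cyclic for the ($\sigma$-weakly dense, $*$-closed) algebra $\car_0$ generated by the $W(h)$, $h\in K$, which by the Weyl relations is the linear span of the $W(h)$'s. Since $W(h_1)\cdots W(h_n)\Omega$ equals, up to a phase, $W(h_1+\dots+h_n)\Omega$ with $h_1+\dots+h_n\in K$, and $W(h)\Omega=e^{-\|h\|^2/4}e^{ih/\sqrt2}$, the space $\overline{\car_0\Omega}$ is the closed span of the coherent vectors $e^{v}$, $v\in iK$. Differentiating the norm-analytic curve $t\mapsto e^{tv}$ (which stays in that span, since $tv\in iK$) recovers every $v^{\otimes n}$, and polarization then yields every $\sym(v_1\otimes\dots\otimes v_n)$ with $v_i\in iK$; conversely each $e^v$ is a norm limit of such vectors. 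Hence $\Omega$ is cyclic for $\car(K)$ iff the complex span of $iK$, i.e. $K+iK$, is dense — property $(iii)$ of Theorem \ref{lattices1}. For the separating property: if $K$ is standard then so is $K'$ by Proposition \ref{1pTTthm}, hence $\Omega$ is cyclic for $\car(K')$ by the step just done; as $\Im(h,k)=0$ for $h\in K$, $k\in K'$ makes $W(h)$ and $W(k)$ commute, $\car(K')\subseteq\car(K)'$, so $\Omega$ is cyclic for $\car(K)'$, i.e. separating for $\car(K)$. Conversely, if $0\neq h\in K\cap iK$ then $\C h\subseteq K$, and through the exponential splitting $e^\ch=e^{\C h}\otimes e^{(\C h)^\perp}$ the subalgebra $\car(\C h)\cong B(e^{\C h})$ (irreducibility of the one-mode Weyl system) sits in $\car(K)$ with $\Omega$ restricting to the pure vacuum state, which is not separating on a type $I$ factor of dimension $>1$; so $\Omega$ is not separating. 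Together this gives the ``if and only if''.

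\textit{The modular operators.} Assume $K$ standard, so $S$ is the closure of $x\Omega\mapsto x^*\Omega$; since $\car_0\Omega$ is a core for $S$ (Kaplansky density) and $S(W(h)\Omega)=W(-h)\Omega$, one has $S\,e^{ih/\sqrt2}=e^{-ih/\sqrt2}$ for $h\in K$. On the other hand $s_K(ih/\sqrt2)=-ih/\sqrt2$ (write $ih/\sqrt2=0+i(h/\sqrt2)$ with $0,h/\sqrt2\in K$), and $e^{s_K}e^{v}=e^{s_K v}$ for such $v$: the partial sums of $e^v$ lie in the natural core of $e^{s_K}$ and, since here $\|s_K v\|=\|v\|$, their images converge, so closedness of $e^{s_K}$ applies. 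Thus $e^{s_K}$ agrees with $S$ on the core $\car_0\Omega$, giving $S\subseteq e^{s_K}$. For the reverse inclusion — the heart of the matter — I would take adjoints, using $s_K^*=s_{K'}$ from Proposition \ref{1pTTthm} and $(e^a)^*=e^{a^*}$, to get $e^{s_{K'}}\subseteq S^*$; then, invoking the second-quantization duality $\car(K)'=\car(K')$ (\cite{EO}; recall $K'$ is standard), $S^*$ is the Tomita operator of $\car(K')$, to which the previous step applies verbatim and yields $S^*\subseteq e^{s_{K'}}$. Hence $S^*=e^{s_{K'}}$, and taking adjoints again $S=(S^*)^*=e^{s_{K''}}=e^{s_K}=e^s$. (One can avoid the quoted duality by proving directly that every $\sym(w_1\otimes\dots\otimes w_n)$, $w_i\in K+iK$, lies in $\cd(S)$ with $S$ acting as $e^{s_K}$ — built from bounded functions of the selfadjoint generators $\Phi(k)$ of $t\mapsto W(tk)$, $k\in K$ — but this essentially re-proves the duality, and is the main obstacle on either route.)

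\textit{Polar decomposition.} Finally $e^{j_K}=\bigoplus_n j_K^{\otimes n}$ is an antiunitary involution ($j_K^2=1$ and tensor powers of an antiunitary are antiunitary, so $(e^{j_K})^2=e^{j_K^2}=1$), while $e^{\delta_K^{1/2}}=\bigoplus_n(\delta_K^{1/2})^{\otimes n}$ is positive selfadjoint with square $e^{\delta_K}$, so $e^{\delta_K^{1/2}}=(e^{\delta_K})^{1/2}$. A routine domain check using multiplicativity of the functor gives $e^{j_K}e^{\delta_K^{1/2}}=\bigoplus_n(j_K\delta_K^{1/2})^{\otimes n}=e^{s_K}=S$; as $e^{j_K}$ is an (antilinear) surjective isometry with trivial kernel and $e^{\delta_K^{1/2}}\geq0$, uniqueness of the polar decomposition of $S$ forces $J=e^{j_K}=e^j$ and $\Delta^{1/2}=e^{\delta_K^{1/2}}$, i.e. $\Delta=e^{\delta_K}=e^\delta$. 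The only nonroutine ingredient is the duality used to close $S=e^{s_K}$; everything else — the cyclic/separating dichotomy and the manipulations with the exponential functor — is bookkeeping.
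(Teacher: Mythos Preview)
Your argument is broadly sound, but for the key step it takes the opposite route from the paper and leans on an external input that the paper does \emph{not} assume.

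\textbf{The reverse inclusion.} You prove $S\subseteq e^{s}$ exactly as the paper does (agreement on the core $\mathrm{span}\{e^{ik}:k\in K\}$, which is Lemma~\ref{lemma:core}(ii)). For $e^{s}\subseteq S$ you invoke the duality $\car(K)'=\car(K')$ and run the core argument again for $K'$. The paper instead proves $e^{s}\subseteq S$ directly, via what you relegate to the parenthetical: Lemma~\ref{lemma:core}(iii) shows that every $\sym(k_1\otimes\dots\otimes k_n)$ with $k_i\in K$ lies in $\cd(S)$ and is $S$-fixed; since $S$ is conjugate-linear, this extends to the complex span, i.e.\ to the defining domain of $e^{s}$, and closedness of $S$ gives $e^{s}\subseteq S$. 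With $S=e^{s}$ in hand, the paper then \emph{derives} $\car(K)'=J\car(K)J=\car(jK)=\car(K')$ as a corollary. So the paper's route is self-contained, whereas yours imports Araki's duality (which, incidentally, is \cite{Araki1}, not \cite{EO}). Your route is not circular if one takes Araki's 1963 proof as given, but within the paper's logic duality is an output, not an input.

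\textbf{Two technical slips.} First, you write $(e^{a})^{*}=e^{a^{*}}$, but the paper only records $(e^{a})^{*}\supseteq e^{a^{*}}$. Your argument actually only needs this inclusion (trace through: $S\subseteq e^{s_K}$ gives $(e^{s_K})^{*}\subseteq S^{*}$, hence $e^{s_{K'}}\subseteq S^{*}$; after the core step for $K'$ you get equality, and then $S=(S^{*})^{*}\supseteq e^{s_{K}}$), so the conclusion survives, but the equality as stated is unjustified. Second, asserting that $e^{\delta^{1/2}}$ is positive \emph{selfadjoint} is not automatic: a positive symmetric operator need not be selfadjoint. The paper handles this with Lemma~\ref{lemma:sa} (Nelson's analytic-vector criterion on spectrally bounded tensor powers), which you should cite or reproduce.

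\textbf{A minor difference.} For ``$\Omega$ separating $\Rightarrow K\cap iK=\{0\}$'' you use irreducibility of the one-mode CCR on $e^{\C h}$; the paper instead observes (Lemma~\ref{lemma:core}(iii)) that $Sk=k$ for $k\in K$, so $k\in K\cap iK$ forces $Sk=\pm k$, hence $k=0$. Both are fine.
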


\begin{theorem}[\cite{Araki1}] \label{thm:lattices}The map $K\to\car(K)$ is an
isomorphism of complemented nets, where the complementation of an
algebra is its commutant and the complementation of a real subspace
$K$ is the simplectic complement $K'=\{h\in \ch:\Im(h,k)=0\}$.
\end{theorem}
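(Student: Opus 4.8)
\noindent\emph{Plan of proof.}
Unwinding the statement, one has to show that $K\mapsto\car(K)$ is injective, order preserving with order-preserving inverse, sends arbitrary joins to joins and meets to meets, and intertwines the two complementations; the substance lies in the last item, the \emph{duality}
\[
\car(K')=\car(K)'\qquad\text{for every closed real subspace }K\subseteq\ch ,
\]
and I would organise everything around it. Monotonicity is clear. For joins, write $\bigvee_\alpha K_\alpha=\big(\sum_\alpha K_\alpha\big)^{-}$: any $h$ in it is a limit of finite sums $\sum_j h_{\alpha_j}$ with $h_{\alpha_j}\in K_{\alpha_j}$, the CCR make $W\!\big(\sum_j h_{\alpha_j}\big)$ a phase times $\prod_j W(h_{\alpha_j})\in\bigvee_\alpha\car(K_\alpha)$, and strong continuity of $h\mapsto W(h)$ then gives $W(h)\in\bigvee_\alpha\car(K_\alpha)$; hence $\car\big(\bigvee_\alpha K_\alpha\big)=\bigvee_\alpha\car(K_\alpha)$, the inclusion $\supseteq$ being monotonicity.

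For the duality, $\car(K')\subseteq\car(K)'$ is immediate: $\Im(k,h)=0$ for $k\in K'$, $h\in K$ forces $W(k)$ and $W(h)$ to commute, again by the CCR. The opposite inclusion I would first settle for \emph{standard} $K$. By Theorem~\ref{thm:2qS}, $(\car(K),e^0)$ is in standard form with modular conjugation $J=e^{j_K}$, so Tomita--Takesaki gives $\car(K)'=e^{j_K}\car(K)e^{j_K}$; a direct computation on coherent vectors gives $e^{j_K}W(h)e^{j_K}=W(-j_Kh)$, and since $-j_KK=j_KK$ this means $e^{j_K}\car(K)e^{j_K}=\car(j_KK)$, which by Proposition~\ref{1pTTthm}$(ii)$ equals $\car(K')$.

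To remove standardness I would use the canonical orthogonal splitting of a closed real subspace. Put $\ch_0:=K\cap iK$ (a closed complex subspace, being $i$-invariant), $\ch_2:=\overline{K+iK}$ (a closed complex subspace containing $K$), $\ch_s:=\ch_2\ominus\ch_0$ and $\ch_1:=\ch\ominus\ch_2$; one checks $K=\ch_0\oplus K_s$ with $K_s:=K\cap\ch_s$ standard in $\ch_s$, and $K'=\ch_1\oplus\{0\}\oplus K_s'$, the symplectic complement $K_s'$ being computed inside $\ch_s$. Under the exponential law $e^{\ch_1\oplus\ch_0\oplus\ch_s}=e^{\ch_1}\otimes e^{\ch_0}\otimes e^{\ch_s}$, along which the Weyl operators factorise, one gets $\car(K)=\C 1\,\overline{\otimes}\,\car(\ch_0)\,\overline{\otimes}\,\car(K_s)$, the last two factors acting on $e^{\ch_0}$ and $e^{\ch_s}$; since the Fock representation of the CCR over the complex Hilbert space $\ch_0$ is irreducible, $\car(\ch_0)=B(e^{\ch_0})$. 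Taking commutants, and using the already proven standard case for $\car(K_s)$, gives $\car(K)'=B(e^{\ch_1})\,\overline{\otimes}\,\C 1\,\overline{\otimes}\,\car(K_s')$, which is precisely what the same factorisation produces for $\car(K')$.

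The remaining properties are now formal. Viewing $\ch$ as a real Hilbert space under $\Re(\cdot,\cdot)$, one has $K'=i K^{\perp}$ with $K^{\perp}$ the real-orthogonal complement, hence $K''=K$; together with the duality this yields the reconstruction formula $K=\{h\in\ch:W(th)\in\car(K)\text{ for all }t\in\R\}$, because if $h\notin K=K''$ there is $k\in K'$ with $\Im(k,h)\neq0$, so that $W(sk)\in\car(K')=\car(K)'$ fails to commute with $W(th)$ for suitable $s,t\in\R$, forcing $W(th)\notin\car(K)$. This formula makes $\car$ injective and its inverse order preserving, and De~Morgan's laws --- which hold both for the symplectic complement and for the commutant --- combined with the join and complement properties give $\car\big(\bigcap_\alpha K_\alpha\big)=\car\big(\big(\bigvee_\alpha K_\alpha'\big)'\big)=\car\big(\bigvee_\alpha K_\alpha'\big)'=\big(\bigvee_\alpha\car(K_\alpha')\big)'=\bigcap_\alpha\car(K_\alpha')'=\bigcap_\alpha\car(K_\alpha)$. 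The step I expect to be the real obstacle is the non-standard case of the duality: one must justify the orthogonal splitting $K=\ch_0\oplus K_s$ with $K_s$ standard and correctly identify the commutants of the three tensor factors --- in particular one needs the classical fact that the Fock representation of the CCR over $\ch_0$ is irreducible. In the applications to local quantum physics the local subspaces $K(\co)$ are standard by Theorem~\ref{lattices1}, so there the short standard-case argument suffices.
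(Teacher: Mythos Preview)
Your argument is correct. The paper itself does not prove the full statement of Theorem~\ref{thm:lattices}: it singles out, as ``property~$(b)$'', only the duality $\car(K)'=\car(K')$ in the \emph{standard} case, and proves it by exactly the route you take --- establish $J=e^{j}$ via Theorem~\ref{thm:2qS}, then compute $J\car(K)J=\{W(jk)^*:k\in K\}''=\car(K')$ using Proposition~\ref{1pTTthm}$(ii)$. For the full lattice isomorphism the paper simply refers to Araki.

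Your proof therefore goes well beyond what the paper actually establishes. The reduction from general $K$ to the standard case via the decomposition $\ch=\ch_1\oplus\ch_0\oplus\ch_s$, the exponential-law factorisation of Fock space, and the irreducibility of $\car(\ch_0)$ on $e^{\ch_0}$ is the standard and correct way to handle non-standard subspaces; your computation of $K'=\ch_1\oplus\{0\}\oplus K_s'$ and the matching of commutants is accurate. The reconstruction formula $K=\{h:W(th)\in\car(K)\ \forall t\}$, derived from duality plus $K''=K$, is the right device for injectivity and for the order-preserving inverse, and the De~Morgan derivation of meets from joins and complements is clean. One cosmetic point: in the reconstruction step the parameter $s$ in $W(sk)$ is unnecessary --- already $W(k)$ fails to commute with $W(th)$ for suitable $t$ once $\Im(k,h)\neq0$.
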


\subsubsection{Some proofs}
We shall now prove the main results concerning second quantization algebras, and in particular the following results from Theorems \ref{thm:2qS} and \ref{thm:lattices}:
\begin{itemize}
\item[$(a)$] The vacuum vector $\Omega$ is cyclic and  separating for the second quantization algebra $\car(K)$  if and only if $K$ is standard. In particular, in this case the set $\{e^k:k\in K\}$ is total in $e^\ch$.
\item[$(b)$] If $K$ is standard, $S=e^s$, $\Delta=e^\delta$, and $J=e^j$. In this case $\car(K)'=\car(K')$. 
 \end{itemize}

\begin{lemma}\label{lemma:coherentvectors}
Let $a$ be a closed, densely defined, operator on the one-particle space $\ch$. Then the vectors $e^h$, with $h\in\cd(a)$, belong to $\cd(e^a)$, and $e^ae^h=e^{ah}$.
\end{lemma}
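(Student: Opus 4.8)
The plan is to verify the claim $e^a e^h = e^{ah}$ for $h \in \cd(a)$ by reducing everything to the definition of the second quantization operator on elementary symmetrized tensors and then passing to the closure. First I would recall that by definition $e^a = \bigoplus_{n=0}^\infty a^{\otimes n}$ on the linear span of symmetrized tensors $\sym(x_1 \otimes \cdots \otimes x_n)$ with each $x_i \in \cd(a)$, closed from this core. Since $h \in \cd(a)$, each tensor power $h^{\otimes n}$ lies in $\cd(a^{\otimes n})$ and $a^{\otimes n}(h^{\otimes n}) = (ah)^{\otimes n}$, so formally applying $e^a$ termwise to $e^h = \bigoplus_n h^{\otimes n}/\sqrt{n!}$ gives $\bigoplus_n (ah)^{\otimes n}/\sqrt{n!} = e^{ah}$. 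The content of the lemma is to justify that $e^h$ actually lies in the domain of the \emph{closure} $e^a$, not merely that this termwise computation makes sense on the algebraic core.

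To make this rigorous I would approximate $e^h$ by vectors in the core. Let $e^h_{(N)} = \bigoplus_{n=0}^N h^{\otimes n}/\sqrt{n!}$ be the truncation; each $e^h_{(N)}$ is a finite sum of symmetrized elementary tensors built from $h \in \cd(a)$, hence lies in $\cd(e^a)$ (on the algebraic core), with $e^a e^h_{(N)} = \bigoplus_{n=0}^N (ah)^{\otimes n}/\sqrt{n!} = e^{ah}_{(N)}$. As $N \to \infty$ we have $e^h_{(N)} \to e^h$ in $e^\ch$ since $\sum_n \|h\|^{2n}/n! = e^{\|h\|^2} < \infty$, and likewise $e^{ah}_{(N)} \to e^{ah}$ since $\sum_n \|ah\|^{2n}/n! = e^{\|ah\|^2} < \infty$ (both norms are finite because $h, ah \in \ch$). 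Since $e^a$ is closed and we have a sequence in its domain converging to $e^h$ whose images converge to $e^{ah}$, it follows that $e^h \in \cd(e^a)$ and $e^a e^h = e^{ah}$, which is exactly the assertion.

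The only mild subtlety — and the step I would be most careful about — is the claim that $\|e^a e^h_{(N)} - e^a e^h_{(M)}\|$ and more precisely the images $e^{ah}_{(N)}$ genuinely converge: this rests on $ah$ being a bona fide vector of $\ch$, i.e.\ on $h$ lying in $\cd(a)$, which is our hypothesis, so there is no obstacle there. One should also note for cleanliness that the $n$-particle components of distinct $e^h_{(N)}$ live in mutually orthogonal summands $\ch^{\otimes_{\sym} n}$, so the convergence of $e^h_{(N)}$ is just convergence of the partial sums of an absolutely summable orthogonal series, and similarly for $e^{ah}_{(N)}$; this makes both limits transparent and the closedness argument completely standard. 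No deeper estimate or analytic continuation is needed: the lemma is essentially the statement that second quantization commutes with applying an operator coordinatewise, together with a one-line closure argument.
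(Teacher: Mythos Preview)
Your proof is correct and follows essentially the same approach as the paper: truncate $e^h$ to finitely many tensor powers, observe that these truncations lie in the core of $e^a$ with images equal to the corresponding truncations of $e^{ah}$, and then use closedness to pass to the limit. The paper phrases the last step as convergence of the truncations to $e^h$ in the graph norm of $e^a$, which is exactly your simultaneous convergence of $e^h_{(N)}\to e^h$ and $e^a e^h_{(N)}=e^{ah}_{(N)}\to e^{ah}$.
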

\begin{proof}
For any $h\in\ch$, 
$$
\|e^h-\bigoplus_{n=0}^N\frac{h^{\otimes n}}{\sqrt{n!}}\|^2\leq\sum_{n=N+1}^\infty\frac{\|h\|^{2n}}{n!}\to0,
\text{for}\ N\to\infty,
$$
as a consequence, when $h\in\cd(a)$, $\oplus_{n=0}^N\frac{h^{\otimes n}}{\sqrt{n!}}$ converges to $e^h$ in the graph norm of $e^a$. The thesis follows.
\end{proof}

\begin{lemma}\label{lemma:sym}
Symmetrized elementary tensors can be written as linear combinations of tensor powers $x^{\otimes n}$, more precisely
\begin{equation}
\sym (x_1\otimes\dots\otimes x_n)=\frac1{n!}
\sum_{F\subseteq(n)}(-1)^{|F|+n}\left(\sum_{j\in F}x_j\right)^{\otimes n},
\end{equation}
where $(n)$ is the set of the first $n$ natural numbers, and $|F|$ denotes the cardinality of the subset $F$. 
\end{lemma}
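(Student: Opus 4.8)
The plan is to prove the identity by a polarization/inclusion-exclusion argument carried out in the symmetric tensor algebra. The natural starting point is to observe that for a single vector $y=\sum_{j\in F}x_j$, the tensor power $y^{\otimes n}$ expands multinomially into a sum of terms of the form $x_{i_1}\otimes\dots\otimes x_{i_n}$ with indices $i_k\in F$; symmetrizing, $\mathrm{Sym}(y^{\otimes n})=y^{\otimes n}$ already, but what we actually want is to isolate the ``fully mixed'' symmetrized tensor in which each of $x_1,\dots,x_n$ appears exactly once. So the strategy is: write the right-hand side of the claimed identity, substitute the multinomial expansion of each $\left(\sum_{j\in F}x_j\right)^{\otimes n}$, and then collect, for each multi-index $(i_1,\dots,i_n)\in\{1,\dots,n\}^n$, the total coefficient it receives after summing over $F$ with the sign $(-1)^{|F|+n}$.

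Concretely, the coefficient of a fixed monomial $x_{i_1}\otimes\dots\otimes x_{i_n}$ on the right-hand side is
\begin{equation*}
\frac{1}{n!}\sum_{F\subseteq(n),\ \{i_1,\dots,i_n\}\subseteq F}(-1)^{|F|+n}.
\end{equation*}
Let $S=\{i_1,\dots,i_n\}\subseteq(n)$ be the set of distinct indices appearing, with $|S|=s$. Summing over $F$ with $S\subseteq F\subseteq(n)$ amounts to summing over subsets of the complement $(n)\setminus S$, which has $n-s$ elements, and the sign contributes $(-1)^{|F|+n}=(-1)^{s+n}(-1)^{|F\setminus S|}$. Hence the coefficient equals $\frac{1}{n!}(-1)^{s+n}\sum_{k=0}^{n-s}\binom{n-s}{k}(-1)^k=\frac{1}{n!}(-1)^{s+n}(1-1)^{n-s}$, which vanishes unless $s=n$, in which case it equals $\frac{1}{n!}$. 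So the only surviving monomials are the $n!$ permutations $x_{\sigma(1)}\otimes\dots\otimes x_{\sigma(n)}$, each with coefficient $\frac{1}{n!}$, and the right-hand side collapses exactly to $\mathrm{Sym}(x_1\otimes\dots\otimes x_n)$ as defined above.

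I do not anticipate a genuine obstacle here: the proof is a finite combinatorial identity and the main thing to get right is the bookkeeping of which monomials appear with which multiplicity and the application of the binomial theorem $(1-1)^{m}=\delta_{m,0}$. The one point worth stating carefully is that a monomial $x_{i_1}\otimes\dots\otimes x_{i_n}$ can appear from the expansion of $\left(\sum_{j\in F}x_j\right)^{\otimes n}$ precisely when every index $i_k$ lies in $F$, i.e. when $\{i_1,\dots,i_n\}\subseteq F$ — the multinomial coefficient of each such ordered monomial in the expansion is exactly $1$ — so no extra combinatorial weights intrude. An alternative, essentially equivalent, route is to proceed by induction on $n$, splitting the sum over $F$ according to whether $n\in F$, and using the multilinearity of $\mathrm{Sym}$ to reduce the step to the case of $n-1$ vectors; but the direct coefficient computation above is cleaner and I would present that.
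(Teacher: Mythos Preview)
Your proof is correct and follows essentially the same inclusion--exclusion argument as the paper: both expand $\bigl(\sum_{j\in F}x_j\bigr)^{\otimes n}$, interchange the sum over $F$ with the sum over monomials, and evaluate $\sum_{S\subseteq F\subseteq(n)}(-1)^{|F|+n}=\delta_{|S|,n}$ via the binomial theorem. The only cosmetic difference is that the paper first applies $\sym$ and groups terms by multi-indices $I$ with $|I|=n$, whereas you work directly with ordered tuples $(i_1,\dots,i_n)$; the underlying combinatorics is identical.
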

\begin{proof}
In the following,  $I$ will denote a multi-index in $(\N\cup\{0\})^n$. We set 
$$|I|=\sum_{j=1}^n	I_j,\ \supp(I)=\{j:I_j\ne0\},\ \begin{pmatrix}n\\ I\end{pmatrix}=\frac{n!}{I_1!\cdot\dots\cdot I_n!},\ \vec{x}^{\otimes I}=\bigotimes_{j\in\supp(I)}x_j^{\otimes I_j}.
$$ 
Since under symmetrization the order in the tensor product does not matter, we have
\begin{align*}
\left(\sum_{j\in F}x_j\right)^{\otimes n}&=\sym\left(\sum_{j\in F}x_j\right)^{\otimes n}
=\sum_{j_1,\dots j_n\in F}\sym(x_{j_1}\otimes\dots\otimes x_{j_n})\\
&=\sum_{\begin{matrix}\supp(I)\subseteq F\\|I|=n\end{matrix}}\begin{pmatrix}n\\ I\end{pmatrix}\sym(\vec{x}^{\otimes I}).
\end{align*}
As a consequence,
\begin{align*}
\sum_{F\subseteq(n)}(-1)^{|F|+n}\left(\sum_{j\in F}x_j\right)^{\otimes n}
&=\sum_{F\subseteq(n)}(-1)^{|F|+n}\sum_{\begin{matrix}\supp(I)\subseteq F\\|I|=n\end{matrix}}\begin{pmatrix}n\\ I\end{pmatrix}\sym(\vec{x}^{\otimes I})\\
&=\sum_{|I|=n}\begin{pmatrix}n\\ I\end{pmatrix}\sym(\vec{x}^{\otimes I})\sum_{\supp(I)\subseteq F\subseteq(n)}(-1)^{|F|+n}.
\end{align*}
We now observe that, setting $j=|\supp(I)|$, the number of sets $F$ of cardinality $\ell$ such that $\supp(I)\subseteq F\subseteq(n)$ is $\begin{pmatrix}n-j\\\ell-j\end{pmatrix}$, hence
\begin{align*}
\sum_{\supp(I)\subseteq F\subseteq(n)}(-1)^{|F|+n}
&=\sum_{\ell=j}^n \begin{pmatrix}n-j\\\ell-j\end{pmatrix}(-1)^{\ell+n}
=\sum_{m=0}^{n-j} \begin{pmatrix}n-j\\m\end{pmatrix}(-1)^{m+j+n}\\
&=(-1)^{j+n}\delta_{jn}=\delta_{jn}.
\end{align*}
Since the only index $I$ with $|I|=n$ and $|\supp(I)|=n$ is $I=(1,\dots,1)$ we get the thesis.
\end{proof}

\begin{lemma}\label{lemma:sa}
Let $a$ be a selfadjoint operator on the one-particle space $\ch$. Then $e^a$ is selfadjoint.
\end{lemma}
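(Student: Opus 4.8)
The plan is to reduce, via the spectral theorem, to a multiplication operator, for which selfadjointness is manifest. Recall first that $e^a$ is symmetric: since $a=a^*$ one has $(e^a)^*\supseteq e^{(a^*)}=e^a$, as observed above. It therefore suffices to exhibit a selfadjoint extension of $e^a$, which will then coincide with $e^a$.

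By the spectral theorem choose a unitary $V\colon\ch\to L^2(X,\mu)$ with $VaV^{-1}=M_\phi$, multiplication by a real measurable function $\phi$. The unitary $e^V$ maps the defining domain of $e^a$ (the span of the symmetrized elementary tensors over $\cd(a)$) bijectively onto that of $e^{M_\phi}$, and $e^V e^a (e^V)^{-1}$ agrees with $e^{M_\phi}$ there; since both operators are the closures of these restrictions, $e^a$ is unitarily equivalent to $e^{M_\phi}$, so we may assume $\ch=L^2(X,\mu)$ and $a=M_\phi$. Identifying the $n$-particle space with the symmetric functions in $L^2(X^n,\mu^n)$, the operator $a^{\otimes n}$ is multiplication by the real measurable function $\phi^{\otimes n}\colon(x_1,\dots,x_n)\mapsto\phi(x_1)\cdots\phi(x_n)$. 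Hence $M:=\bigoplus_{n\ge0}M_{\phi^{\otimes n}}$, taken on its maximal domain $\{(f_n)_n:\sum_n\|\phi^{\otimes n}f_n\|_2^2<\infty\}$, is multiplication by a single real measurable function on the disjoint union $\bigsqcup_n X^n$, and is therefore selfadjoint.

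So it only remains to check that $e^a$ — the closure of $\bigoplus_n M_{\phi^{\otimes n}}$ on the span $D_0$ of the symmetrized elementary tensors $\sym(g_1\otimes\cdots\otimes g_n)$ with $g_i\in\cd(M_\phi)$ — coincides with $M$. This domain bookkeeping is the only real point. For fixed $n$ the symmetrized elementary tensors from $\cd(M_\phi)$ form a core for $M_{\phi^{\otimes n}}$: with the spectral cutoffs $X_k=\{|\phi|\le k\}$, a generic $f\in\cd(M_{\phi^{\otimes n}})$ is approximated in graph norm by $f\,\mathbf 1_{X_k^n}$ (dominated convergence), and on $L^2(X_k^n)$ the symbol $\phi^{\otimes n}$ is bounded, so $f\,\mathbf 1_{X_k^n}$ is approximated in graph norm by symmetrizations of elementary tensors supported in $X_k^n$, which lie in $\cd(M_\phi)$ since $\phi$ is bounded there. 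As $D_0$ is precisely the algebraic direct sum over $n$ of these per-level cores, and for a direct sum of closed operators the closure of its restriction to the algebraic direct sum of cores is the whole operator on the maximal domain (truncate in $n$; the tails vanish in graph norm), we conclude $e^a=M$, which is selfadjoint.

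The main obstacle is thus purely the domain analysis in the last step — certifying that the rather small domain used to \emph{define} $e^a$ (only symmetrized elementary tensors, and only finite linear combinations across particle number) is a core for the natural maximal multiplication operator. The reduction to multiplication operators and the selfadjointness of multiplication by a real function are then immediate.
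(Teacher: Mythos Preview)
Your argument is correct, though the sentence ``It therefore suffices to exhibit a selfadjoint extension of $e^a$, which will then coincide with $e^a$'' is false as a general principle (closed symmetric operators may admit many distinct selfadjoint extensions). Fortunately you do not actually use it: what you prove is that $D_0$ is a core for the selfadjoint multiplication operator $M$, so that $e^a=\overline{M|_{D_0}}=M$. That is a valid argument for equality, not merely for the existence of an extension; just rewrite the opening accordingly.

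Your route is genuinely different from the paper's. The paper invokes Nelson's analytic vector theorem: using Lemma~\ref{lemma:sym} it reduces to tensor powers $x^{\otimes n}$ with $x$ in a bounded spectral subspace of $a$, checks these form a total set, and verifies directly that they are analytic vectors for $e^a$. Your approach instead realizes $e^a$ concretely as multiplication by a real measurable function on a disjoint union of product spaces, so selfadjointness is immediate once the domain bookkeeping (your last paragraph) is done. The trade-off is that the paper's proof outsources the hard work to Nelson's theorem and is very short, while yours is self-contained and more elementary but requires the explicit core verification at each particle number and across the direct sum. Both arguments ultimately rely on the same spectral cutoffs $\{|\phi|\le k\}$ (equivalently $e_{[-\alpha,\alpha]}$) to tame the unboundedness.
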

\begin{proof}
Let  $e_U$ be the spectral projection of the operator $a$ for the Borel set $U$. Making use of Lemma \ref{lemma:sym}, one can show that vectors of the form $x^{\otimes n}$, $n\in\N$, 
$x\in\cd(a)$, $e_{[-\alpha,\alpha]}x=x$, $\alpha>0$, form a total set in $e^\ch$.
By a direct computation, such vectors are analytic for $e^a$. The thesis follows by Nelson Theorem, \cite{RS2} Theorem X.39.
\end{proof}

\begin{lemma}\label{lemma:n-th-der}
For any vector $h\in \ch$,
$$
\frac{d^n}{dt^n}e^{th}|_{t=0}=\sqrt{n!}\ h^{\otimes n},
$$
where derivatives converge in norm.
\end{lemma}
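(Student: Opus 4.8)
The plan is to read off the derivative directly from the defining series of the coherent vectors. By definition,
\[
e^{th}=\bigoplus_{k=0}^\infty\frac{(th)^{\otimes k}}{\sqrt{k!}}=\bigoplus_{k=0}^\infty\frac{t^k}{\sqrt{k!}}\,h^{\otimes k},
\]
a power series in the real variable $t$ whose $k$-th coefficient $h^{\otimes k}/\sqrt{k!}$ lies in the $k$-particle space $\ch^{\otimes_{\sym}k}$; in particular distinct coefficients are mutually orthogonal. Granting that this series may be differentiated term by term in the norm of $e^\ch$, the $n$-th derivative is $\bigoplus_{k\ge n}\frac{k!}{(k-n)!}\,\frac{t^{k-n}}{\sqrt{k!}}\,h^{\otimes k}$, and setting $t=0$ leaves only the summand $k=n$, which is $\frac{n!}{\sqrt{n!}}\,h^{\otimes n}=\sqrt{n!}\,h^{\otimes n}$, as desired.

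The one point that requires justification is therefore the term-by-term differentiation. I would argue as follows. Each partial sum $P_N(t)=\bigoplus_{k=0}^N\frac{t^k}{\sqrt{k!}}h^{\otimes k}$ is a polynomial in $t$ with coefficients in $e^\ch$, hence an entire $e^\ch$-valued function whose $\ell$-th derivative is $\bigoplus_{k=\ell}^N\frac{k!}{(k-\ell)!}\frac{t^{k-\ell}}{\sqrt{k!}}h^{\otimes k}$. Using $\|h^{\otimes k}\|=\|h\|^k$ and the orthogonality of the summands, for $|t|\le R$ one gets
\[
\Big\|\bigoplus_{k=\ell}^\infty\frac{k!}{(k-\ell)!}\frac{t^{k-\ell}}{\sqrt{k!}}h^{\otimes k}\Big\|^2=\sum_{k=\ell}^\infty\Big(\frac{k!}{(k-\ell)!}\Big)^2\frac{|t|^{2(k-\ell)}}{k!}\|h\|^{2k}\le\sum_{k=\ell}^\infty\Big(\frac{k!}{(k-\ell)!}\Big)^2\frac{R^{2(k-\ell)}}{k!}\|h\|^{2k},
\]
and the last series converges by the ratio test, uniformly for $|t|\le R$. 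Hence for every $\ell$ the formally differentiated series converges uniformly on compact subsets of $\R$ (indeed of $\C$), so $P_N$ converges to $t\mapsto e^{th}$ together with all of its derivatives, uniformly on compacta; in particular $\frac{d^n}{dt^n}e^{th}$ exists in norm and is obtained by differentiating the series term by term. (Equivalently: $t\mapsto e^{th}$ is an $e^\ch$-valued entire function, and the $n$-th derivative at the origin of a convergent power series $\sum_k c_k t^k$ equals $n!\,c_n$.)

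Combining the two steps, $\frac{d^n}{dt^n}e^{th}\big|_{t=0}=n!\cdot\frac{h^{\otimes n}}{\sqrt{n!}}=\sqrt{n!}\,h^{\otimes n}$. There is no genuine obstacle here; the only mildly technical ingredient is the uniform estimate above, which legitimizes exchanging differentiation with the infinite direct sum and guarantees that the convergence of the derivatives is in norm.
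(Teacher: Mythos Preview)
Your proof is correct and arrives at the same closed formula for the $n$-th derivative as the paper. The paper organizes the justification slightly differently: instead of invoking the general principle that a power series with uniformly convergent formally differentiated series may be differentiated term by term, it proves by induction on $n$ that
\[
\frac{d^n}{dt^n}e^{th}=\bigoplus_{j=0}^{\infty}\frac{\sqrt{(j+n)!}}{j!}\,t^{j}h^{\otimes (j+n)},
\]
establishing each inductive step by a direct estimate on the difference quotient $\eps^{-1}\big((t+\eps)^{j+1}-t^{j+1}-(j+1)t^j\eps\big)$ via the binomial expansion. Your route is a bit more conceptual (coherent vectors form a Banach-space-valued power series, hence are analytic, and the $n$-th derivative at the origin is $n!$ times the $n$-th coefficient), while the paper's is fully hands-on and self-contained; both ultimately rest on the same convergence estimate for the formally differentiated series, so the difference is one of packaging rather than substance.
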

\begin{proof}
We prove by induction on $n$ that the following formula is true in norm:
$$
\frac{d^n}{dt^n}e^{th}=\bigoplus_{j=0}^{\infty}\frac{\sqrt{(j+n)!}}{j!}t^{j}h^{\otimes (j+n)}.
$$
The result is true for $n=0$. Assume it for $n$, then
\begin{align*}
\frac{d^{(n+1)}}{dt^{(n+1)}}e^{th}&=\lim_{\eps\to0}\eps^{-1}
\bigoplus_{j=0}^{\infty}\frac{\sqrt{(j+n)!}}{j!}((t+\eps)^{j}-t^{j})h^{\otimes (j+n)}\\
&=
\lim_{\eps\to0}\eps^{-1}
\bigoplus_{j=0}^{\infty}\frac{\sqrt{(j+n+1)!}}{(j+1)!}((t+\eps)^{(j+1)}-t^{(j+1)})h^{\otimes (j+n+1)}
\end{align*}
If $|\eps|<1$, 
\begin{align*}
\left|\frac{(t+\eps)^{(j+1)}-t^{(j+1)}}{\eps}-(j+1)t^j\right|
&\leq\sum_{p=2}^{j+1}	
\begin{pmatrix}(j+1)\\p\end{pmatrix}|\eps|^{p-1}|t|^{j+1-p}\\
&\leq |\eps| \sum_{p=0}^{j+1}\begin{pmatrix}(j+1)\\p\end{pmatrix}|t|^{j+1-p}
\leq|\eps|(|t|+1)^{j+1}.
\end{align*}
As a consequence,
\begin{align*}
\|&
\bigoplus_{j=0}^{\infty}\frac{\sqrt{(j+n+1)!}}{(j+1)!}\frac{(t+\eps)^{(j+1)}-t^{(j+1)}}{\eps}h^{\otimes (j+n+1)}
- \bigoplus_{j=0}^{\infty}\frac{\sqrt{(j+n+1)!}}{j!}t^{j}h^{\otimes (j+n+1)}\|^2\\
&= 
\|\bigoplus_{j=0}^{\infty}\frac{\sqrt{(j+n+1)!}}{(j+1)!}h^{\otimes (j+n+1)}
\left(\frac{(t+\eps)^{(j+1)}-t^{(j+1)}}{\eps}-(j+1)t^{j}\right)\|^2\\
&\leq\eps^2
\sum_{j=0}^{\infty}\frac{(j+n+1)!}{((j+1)!)^2}\|h\|^{2 (j+n+1)}(|t|+1)^{2(j+1)}
\end{align*}
\end{proof}

\begin{lemma}\label{lemma:core}
Let  $\Omega$ be a  standard vector for $\car(K)$,  $S$ the associated Tomita operator. Then
\item[$(i)$] If $k\in K$, $Se^{ik}=e^{-ik}$.
\item[$(ii)$] Let $\cd$ be the linear span of the vectors $e^{ik}$, $k\in K$. Then  the closure of $\cd$ w.r.t. the graph norm of $S$ is the domain of $S$.
\item[$(iii)$] For $k_1,\dots,k_n\in K$, $\sym(k_1\otimes\dots\otimes k_n)$ belongs to the domain of $S$, and is invariant under $S$.
\end{lemma}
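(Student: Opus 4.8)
For $(i)$ I would apply the Weyl relation $W(h)e^0=\exp(-\frac14\|h\|^2)e^{\frac{i}{\sqrt2}h}$ with $h=\sqrt2\,k$, obtaining $W(\sqrt2 k)\Omega=\exp(-\frac12\|k\|^2)e^{ik}$; since $K$ is real linear, $\sqrt2\,k\in K$, so $W(\sqrt2 k)\in\car(K)$ and hence $SW(\sqrt2 k)\Omega=W(\sqrt2 k)^*\Omega=W(-\sqrt2 k)\Omega=\exp(-\frac12\|k\|^2)e^{-ik}$. As $S$ is antilinear and $c:=\exp(-\frac12\|k\|^2)$ is a positive real, $S(c\,e^{ik})=c\,Se^{ik}$, which forces $Se^{ik}=e^{-ik}$.

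For $(ii)$ the first step is to observe, via the Weyl relations, that a product $W(k_1)\cdots W(k_m)$ with $k_j\in K$ is a scalar multiple of $W(k_1+\dots+k_m)$ and that $k_1+\dots+k_m\in K$; thus the linear span $\cb$ of $\{W(k):k\in K\}$ is a unital $*$-subalgebra of $\car(K)$ and $\cd=\cb\Omega$. Since $\car(K)=\cb''$, $\cb$ is $\sigma$-weakly — hence $\sigma$-strongly — dense in $\car(K)$, so by Kaplansky density every $A\in\car(K)$ is the limit of a bounded net $A_\iota\in\cb$ in the $\sigma$-strong$^*$ topology; then $A_\iota\Omega\to A\Omega$ and $SA_\iota\Omega=A_\iota^*\Omega\to A^*\Omega=SA\Omega$, so $(A\Omega,SA\Omega)$ lies in the graph closure of $S|_{\cd}$. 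Because $\car(K)\Omega$ is by construction a core for $S$, I would conclude that $\cd$ is a core for $S$.

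For $(iii)$ I would fix $h\in K$, set $\alpha(t)=e^{ith}$ and $\beta(t)=e^{-ith}$, and note that, as $th\in K$, part $(i)$ gives $\alpha(t)\in\cd(S)$ with $S\alpha(t)=\beta(t)$ for all $t\in\R$. By Lemma \ref{lemma:n-th-der} applied to $\pm ih$, $\alpha$ and $\beta$ are $C^\infty$ in norm with $\alpha^{(n)}(0)=\sqrt{n!}\,(ih)^{\otimes n}$ and $\beta^{(n)}(0)=\sqrt{n!}\,(-ih)^{\otimes n}$. An induction on $n$ then does it: since $\cd(S)$ is a subspace, $\eps^{-1}\big(\alpha^{(n)}(t+\eps)-\alpha^{(n)}(t)\big)\in\cd(S)$, and because $S$ is antilinear and $\eps^{-1}$ is real, $S$ sends it to $\eps^{-1}\big(\beta^{(n)}(t+\eps)-\beta^{(n)}(t)\big)$; letting $\eps\to0$ and using that $S$ is closed upgrades the statement ``$\alpha^{(n)}(t)\in\cd(S)$ and $S\alpha^{(n)}(t)=\beta^{(n)}(t)$'' from $n$ to $n+1$. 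Evaluating at $t=0$ and pulling the scalar $i^n$ through the antilinear $S$ gives $h^{\otimes n}\in\cd(S)$ and $Sh^{\otimes n}=h^{\otimes n}$; finally, Lemma \ref{lemma:sym} writes $\sym(k_1\otimes\dots\otimes k_n)$ as a real linear combination of tensor powers $\big(\sum_{j\in F}k_j\big)^{\otimes n}$ with $\sum_{j\in F}k_j\in K$, each $S$-fixed, so $\sym(k_1\otimes\dots\otimes k_n)\in\cd(S)$ and is $S$-fixed.

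The step I expect to be the main obstacle is $(ii)$. Parts $(i)$ and $(iii)$ are essentially computations once the Weyl relations and Lemma \ref{lemma:n-th-der} are available, but turning the (easy) density of $\cd$ into the statement that $\cd$ is a \emph{core} for $S$ needs genuine operator-algebraic input — the bicommutant theorem together with Kaplansky density — in order to approximate an arbitrary $A\in\car(K)$ in a topology strong enough to control $A\Omega$ and $A^*\Omega=SA\Omega$ at the same time; the only other point requiring a little care is the identification $\cd=\cb\Omega$, where the Weyl relations are used to collapse products to single generators.
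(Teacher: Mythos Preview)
Your argument is correct and follows the same route as the paper's proof: part $(i)$ via $SW(k)\Omega=W(-k)\Omega$, part $(ii)$ via strong$^*$ approximation of $A\in\car(K)$ by the linear span of Weyl unitaries, and part $(iii)$ via the norm-differentiability of $t\mapsto e^{ith}$ from Lemma \ref{lemma:n-th-der} combined with the closedness of $S$, followed by Lemma \ref{lemma:sym}. The only differences are cosmetic: you spell out the $\sqrt2$ rescaling in $(i)$, you name Kaplansky explicitly in $(ii)$ where the paper simply asserts strong$^*$ convergence, and in $(iii)$ you run a clean induction on the order of the derivative whereas the paper phrases the same step as ``$\frac{d^n}{dt^n}e^{itk}|_{t=0}$ is a limit in the graph norm of $S$ of elements of $\cd$''.
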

\begin{proof} $(i)$. If $k\in K$, we have $SW(k)\Omega=W(-k)\Omega$, which implies the thesis.

$(ii)$. Indeed, any operator $A\in\car(K)$ can be written as the limit, in the strong$^*$-topology, of operators  $A_i$, with $A_i$ in the linear span of the $W(k)$'s, $k\in K$, therefore $A_i\Omega$ converges to $A\Omega$ in the graph norm of $S$, i.e. $A\Omega\in\overline{\cd}$. Since $\cd(S)$ is the closure, w.r.t. the graph norm, of $\{A\Omega,A\in\car(K)\}$, the thesis follows.

$(iii)$. By Lemma \ref{lemma:n-th-der}, one gets that, for $k\in K$, $\frac{d^n}{dt^n}e^{itk}|_{t=0}$ is a limit, in the graph norm of $S$, of elements of $\cd$. As a consequence, $k^{\otimes n}\in\cd(S)$, and, because of $(i)$, $Sk^{\otimes n}=k^{\otimes n}$. By Lemma \ref{lemma:sym} one gets that, for $k_1,\dots,k_n\in K$, $\sym(k_1\otimes\dots\otimes k_n)$ belongs to the domain of $S$, and is invariant under $S$. 
\end{proof}

\begin{proof}[Proof of Property $(a)$]
Since  $W(k)\Omega=e^{-\|k\|^2/4}e^{ik/\sqrt2}$, the sets $\{W(k)\Omega:k\in K\}$ and $\{e^{ik}:k\in K\}$ span the same space. From the Lemmas  \ref{lemma:sym} and \ref{lemma:n-th-der} above, the norm closure of the linear span of the set $\{e^k:k\in K\}$ contains all symmetrized elementary tensors of the form $\sym (x_1\otimes\dots\otimes x_n)$, with $x_i\in K$. Therefore, if $K+iK$ is dense, $\Omega$ is cyclic for $\car(K)$.
On the other hand, if $k\in K$ and $k'\in K'$, $\Im (k,k')=0$, therefore the canonical commutation relations imply  $[W(k),W(k')]=0$, i.e. $\car(K')\subset\car(K)'$. Then,  $K\cap iK=\{0\}$ implies, passing to the real-orthogonal complement, $K'+ iK'$ is dense. As a consequence, $\car(K')\Omega$ is dense, hence $\car(K)'\Omega$ is dense, namely $\Omega$ is separating for $\car(K)$. 
\\
Conversely, if $\Omega$ is cyclic for $\car(K)$ the set $\{e^{ik}:k\in K\}$ is total in $e^\ch$. Since the norm closure of the linear span of the symmetrized elementary tensors of the form $\sym (x_1\otimes\dots\otimes x_n)$, $x_i\in K$, contains  $\{e^{ik}:k\in K\}$, the symmetrized elementary tensors above are total in $e^\ch$. In particular, $K$ is total in $\ch$, namely $K+iK$ is dense.
If moreover $\Omega$ is separating, the operator $S$ is defined and, by  Lemma \ref{lemma:core} $(iii)$, $k\in K$ implies $Sk=k$. Then, if $k\in K\cap iK$, we get $Sk=k$ and $Sk=-k$, namely $k=0$
\end{proof}

\begin{proof}[Proof of Property $(b)$]
By Lemma \ref{lemma:core} $(iii)$, one gets $S\supseteq e^s$. On the other hand, by Lemma \ref{lemma:coherentvectors}, $e^{ik}\in\cd(e^s)$, $k\in K$, and $e^{s}$ coincides with $S$ on such vectors. Since the linear span $\cd$ of such vectors is a core for $S$ by Lemma \ref{lemma:core} $(ii)$, we get $e^s\supset S$.
We now observe that, given the polar decomposition $s=j\delta^{1/2}$, $e^j$ is anti-unitary, $e^{\delta^{1/2}}$ is positive selfadjoint by Lemma \ref{lemma:sa}, and $S=e^s=e^je^{\delta^{1/2}}$. Since $S$ is invertible, its polar decomposition $J\Delta^{1/2}$ is uniquely determined by the requirement that $J$ is anti-unitary and $\Delta\geq0$, hence $J=e^j$ and $\Delta=e^{\delta}$.
\\
Finally, $\car(K)'=J\car(K)J=J\{W(k):k\in K\}''J=\{JW(k)J:k\in K\}''=\{W(jk)^*:k\in K\}''=\car(K')$. 
\end{proof}

\subsection{Axioms for Quantum Field Theories}
\subsubsection{Observable algebras \& Haag-Kastler axioms}

We now put together the net of local spaces with the second quantization algebra construction. If $\ch=L^2(H_m,\ d\Omega_m)$, we may consider the following net of von~Neumann algebras on $e^\ch$.
\begin{equation}\label{freefieldlocalg}
\co\to\car(\co):=\car(K(\co)).
\end{equation}
The algebra $\car(\co)$ is interpreted as the algebra whose self-adjoint elements describe the physical quantities which can be observed in the region $\co$. Such net describes non-interacting neutral (i.e. self-adjoint fields) spin-zero Bose particles. In the following we shall only consider causally complete regions, namely regions for which $\co=\co''$, and more specially double cones. A double cone is obtained by applying any Poincar\'e transformation to the causal completion of an open ball in the time-zero plane.
The net $\co\to\car(\co)$, $\co$ being a causally complete region in $M^4$,
satisfies the following properties:

\begin{itemize}
\item[$(1)$] (isotony).\quad
$\co_1\subset\co_2\Rightarrow \car(\co_1)\subset\car(\co_2)$;
\item[$(2)$] (locality).\quad
$\co_1\subset\co_2'\Rightarrow \car(\co_1)\subset\car(\co_2)'$;
\item[$(2')$] (Haag duality).\quad
$ \car(\co')=\car(\co)'$;
\item[$(3)$] (Poincar\'e symmetry).\quad The Poincar\'e group acts as automorphisms of the net,
in such a way that $\alpha_g(\car(\co))=\car(g\co)$, $ g\in \cp^\uparrow_+$. 
\end{itemize}

Properties $(1), (2), (3)$ of a net of von~Neumann algebras (or C$^*$-algebras) on double-cones of the Minkowski space are called Haag-Kastler axioms, and have been proposed as a minimal set of axioms for a local quantum theory (cf.  \cite{HK}). 

A representation $\pi$ of a net $\co\to\car(\co)$ on a Hilbert space $\ch$ is a family $\{\pi_\co\}$, with $\pi_\co$ a representation of $\car(\co)$ on $\ch$, such that $\pi_{\co_2}|_{\car(\co_1)}=\pi_{\co_1}$ if $\co_1\subset\co_2$. When the family of regions is directed, this is the same as giving a representation of the inductive limit C$^*$-algebra.

The net $\co\to\car(\co)$ of local algebras for the free scalar field is equipped with a representation $\pi_0$
satisfying

\begin{itemize}
\item[$(3')$] (Poincar\'e covariance).\quad There exists a strongly continuous unitary representation $U$ of the Poincar\'e group $\cp^\uparrow_+$ such that $U(g)\pi_0(\car(\co))U(g)^*=\pi_0(\car(g\co))$, $ g\in \cp^\uparrow_+$. 

\item[$(4)$] (Positive energy) The joint spectrum of the generators of the translation subgroup lies in the closed forward light  cone.
\item[$(5)$] (vacuum).\quad
There exists a unique (up to a multiplicative constant) translation invariant vector $\Omega$. The set $\{\pi_0(A)\Omega, A\in\car(\co),\co$ double cone$\}$, is dense in $\ch$.
\end{itemize}

A representation $\pi_0$ of a Haag-Kastler net on a Hilbert space is called a {\it vacuum representation} if  properties $(3'), (4), (5)$ above hold.

\begin{proposition} 
With the assumptions above, $U(g)\Omega=\Omega$, for any $g\in\cp^\uparrow_+$.
\end{proposition}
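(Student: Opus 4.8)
The plan is to show that $U(g)\Omega$ is again invariant under all translations for every $g\in\cp^\uparrow_+$, conclude by the uniqueness part of property $(5)$ that $U(g)\Omega$ is a scalar multiple of $\Omega$, and then prove that the resulting scalar is always $1$ by identifying it with a continuous character of $\cp^\uparrow_+$ and showing that such a character is trivial.

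First, write $g=(\Lambda,b)$ with $\Lambda\in\cl^\uparrow_+$, $b\in\R^4$, and recall that the translations form a normal subgroup of $\cp^\uparrow_+$ with $g\,(1,a)\,g^{-1}=(1,\Lambda a)$; since $U$ is a unitary representation of $\cp^\uparrow_+$ this gives $U(g)U(1,a)U(g)^{*}=U(1,\Lambda a)$ for all $a\in\R^4$. Using the translation invariance of $\Omega$ we obtain, for every $a\in\R^4$,
\begin{equation*}
U(1,\Lambda a)\,U(g)\Omega=U(g)\,U(1,a)\,\Omega=U(g)\Omega ,
\end{equation*}
and since $\Lambda$ is invertible $\{\Lambda a:a\in\R^4\}=\R^4$, so $U(g)\Omega$ is invariant under the whole translation subgroup. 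Hence there is $c(g)\in\C$ with $U(g)\Omega=c(g)\Omega$, and $|c(g)|=1$ because $U(g)$ is unitary and $\Omega\neq0$. From $U(g_1g_2)=U(g_1)U(g_2)$ one reads off $c(g_1g_2)=c(g_1)c(g_2)$, and strong continuity of $U$ makes $c\colon\cp^\uparrow_+\to\T$ a continuous character.

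It remains to show $c\equiv1$. On the translation subgroup $c$ is trivial by property $(5)$ (alternatively $c|_{\R^4}$ has the form $a\mapsto e^{i\xi\cdot a}$, and conjugation invariance $c(1,\Lambda a)=c(g)c(1,a)c(g)^{-1}=c(1,a)$ forces $\xi$ to be fixed by all $\Lambda\in\cl^\uparrow_+$, hence $\xi=0$), so $c$ factors through $\cl^\uparrow_+$. Now $\cl^\uparrow_+$ is a connected Lie group whose Lie algebra $\mathfrak{so}(1,3)$ is simple and hence equals its own commutator subalgebra; a continuous character of such a group differentiates to a Lie-algebra homomorphism into $\R$, which must vanish, so $c\equiv1$ on $\cl^\uparrow_+$ by connectedness. (If one prefers to avoid the structure theory: every spatial rotation and every boost is conjugate in $\cl^\uparrow_+$ to its own inverse, a rotation by $\pi$ about a suitable perpendicular axis reversing the rotation or the boost velocity, so $c(\ell)=c(\ell^{-1})=c(\ell)^{-1}$ on a set of generators of $\cl^\uparrow_+$; thus $c$ takes values in $\{\pm1\}$ and must be $1$ by continuity and connectedness.) Combining, $c(g)=1$ and $U(g)\Omega=\Omega$ for all $g\in\cp^\uparrow_+$.

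The main obstacle is precisely this last step: everything before it is purely formal, and the content is that the only continuous $\T$-valued character of the proper orthochronous Poincar\'e group is the trivial one. This rests on the absence of nontrivial characters of $\cl^\uparrow_+$ (perfectness of its Lie algebra, or the elementary conjugacy-to-the-inverse argument) together with the Lorentz-invariance obstruction that kills the translational part of any would-be character. Note that neither positivity of energy nor cyclicity of $\Omega$ is needed here, only uniqueness of the translation-invariant vector and unitarity of the $U(g)$.
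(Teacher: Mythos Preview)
Your proof is correct and follows essentially the same route as the paper's: show that $U(g)\Omega$ is translation invariant, invoke the uniqueness in property~$(5)$ to obtain a character, and kill the character using that $\cl^\uparrow_+$ is perfect. The paper handles only $g\in\cl^\uparrow_+$ explicitly (translations being immediate from $(5)$) and appeals directly to perfectness of the Lorentz group, whereas you treat general $g=(\Lambda,b)$ from the start and supply more detail on why the character vanishes, including the alternative conjugate-to-inverse argument; these are expository differences rather than substantive ones.
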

\begin{proof} Let $g$ be an element of the Lorentz group. Then $$U(\tau_x)U(g)\Omega=U(g)U(\tau_{g^{-1}x})\Omega=U(g)\Omega,$$ namely $U(g)\Omega$ is translation invariant. Uniqueness up to a constant imply $U(g)\Omega=c_g\Omega$, with $|c_g|=1$, namely $g\to c_g$ is a one-dimensional representation of $\cl^\uparrow_+$. Since the Lorentz group is perfect, it has no non-trivial one dimensional representations, namely $c_g=1$.
\end{proof}

The following theorem is due to Borchers in this setting, but is usually called Reeh-Schlieder theorem, because of the analogous result in the Wightman setting (see Theorem \ref{RSthm}).

\begin{theorem}[\cite{Borch68}]
If axioms $(1), (2), (3'), (4), (5)$ are satisfied, and additivity holds, namely $\co=\cup_i\co_i$ implies $\car(\co)=\vee_i\car(\co_i)$, then the vacuum vector is cyclic and separating for any double cone.
\end{theorem}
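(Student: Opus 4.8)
The plan is to run the standard Reeh--Schlieder argument in the algebraic setting: exploit the global cyclicity contained in axiom $(5)$ together with translation covariance $(3')$ and the positivity of the energy $(4)$, use additivity at the end to ``fill up'' the whole Minkowski space, and finally invoke locality $(2)$ to pass from cyclicity to the separating property.

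\emph{Setting up the analytic function.} Fix a double cone $\co$ and a vector $\psi$ orthogonal to $\car(\co)\Omega$; the goal is $\psi=0$. I would choose a double cone $\co_1$ with $\overline{\co_1}\subset\co$ and an $\eps>0$ such that $\co_1+x\subset\co$ whenever $|x|<\eps$ (Euclidean norm on $\R^4$). For $A_1,\dots,A_n\in\car(\co_1)$ set
$$
f(x_1,\dots,x_n)=\langle\psi,\ U(x_1)A_1U(x_2)A_2\cdots U(x_n)A_n\Omega\rangle,\qquad x_j\in\R^4 .
$$
A telescoping computation using $U(-x)\Omega=\Omega$ rewrites the vector on the right as $W_1W_2\cdots W_n\Omega$, with $W_k=U(s_k)A_kU(-s_k)\in\car(\co_1+s_k)$ and $s_k=x_1+\dots+x_k$. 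Hence on the open neighbourhood of the origin where $|s_k|<\eps$ for all $k$, each $W_k$ — and therefore the product $W_1\cdots W_n\in\car(\co)$ — sends $\Omega$ into $\car(\co)\Omega$, so $f$ vanishes there.

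\emph{Analytic continuation --- the crux.} By the spectrum condition $(4)$ the joint spectrum of the translation generators $P=(P_0,\dots,P_3)$ lies in the self-dual cone $\overline V_+$; hence $U(z):=e^{iz\cdot P}$ is a bounded operator of norm $\le1$ whenever $\mathrm{Im}\,z\in\overline V_+$, and it is holomorphic in $z$ for $\mathrm{Im}\,z$ in the open cone. Consequently $f$ extends to a function holomorphic in the tube $\{(z_1,\dots,z_n):\mathrm{Im}\,z_j\in V_+\ \forall j\}$ and continuous up to the real boundary. Since $f$ vanishes on a non-empty open subset of $\R^{4n}$, the standard analytic-continuation lemma (iterated one-variable Schwarz reflection across the open real piece, using that the tube is connected; cf.\ \cite{SW}) forces $f\equiv0$ on the whole tube, in particular $f\equiv0$ on $\R^{4n}$. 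I expect this complex-analytic step to be the main obstacle, the delicate point being to propagate the vanishing from a boundary neighbourhood through the several-variable tube.

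\emph{Harvesting the conclusion.} From $f\equiv0$ and the rewriting above, $\psi$ is orthogonal to $W_1\cdots W_n\Omega$ for all $W_k\in\car(\co_1+t_k)$ with $t_k\in\R^4$ arbitrary and all $n\ge1$; equivalently $\psi\perp\ca_0\Omega$, where $\ca_0$ is the $*$-algebra generated by $\bigcup_{t\in\R^4}\car(\co_1+t)$. Since the translates $\co_1+t$ cover $M^4$, additivity gives $\ca_0''\supseteq\car(\co_2)$ for every double cone $\co_2$, and by Kaplansky's density theorem $\overline{\ca_0\Omega}\supseteq\overline{\car(\co_2)\Omega}$; combined with axiom $(5)$ this yields $\overline{\ca_0\Omega}=\ch$, so $\psi=0$. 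Thus $\Omega$ is cyclic for $\car(\co)$ — and, by the same argument, for $\car(\cd)$ for every non-empty open $\cd$. For the separating property I would note that the spacelike complement $\co'$ is open and non-empty, hence contains a double cone $\tilde\co$; by locality $(2)$, $\car(\tilde\co)\subseteq\car(\co)'$, and $\car(\tilde\co)\Omega$ is dense by the cyclicity just established, so $\car(\co)'\Omega$ is dense, i.e.\ $\Omega$ is separating for $\car(\co)$.
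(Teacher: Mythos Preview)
The paper does not supply a proof of this statement; it is stated with a citation to Borchers \cite{Borch68} and the exposition then moves on to the G{\aa}rding--Wightman axioms. Your argument is the standard Reeh--Schlieder/Borchers proof and is essentially correct.

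Two minor remarks. First, the analytic-continuation step---which you rightly single out as the crux---is described somewhat loosely. The phrase ``iterated one-variable Schwarz reflection'' is a reasonable shorthand, but the clean statement you need is: a function holomorphic in the tube $\R^{4n}+i(V_+)^n$ with continuous boundary values that vanish on a non-empty open subset of $\R^{4n}$ vanishes identically. This is covered by the reference you give to \cite{SW}; it can also be obtained by the iteration you sketch (first free $x_1$ using the one-tube result for $\R^4+iV_+$, then $x_2$, etc.), where the one-tube result itself reduces to a single complex variable by restricting to lines $x+\tau\eta$ with $\eta\in V_+$ and iterating once. Second, the appeal to Kaplansky in the harvesting paragraph is unnecessary: once additivity gives $\ca_0''\supseteq\car(\co_2)$, the bicommutant theorem already yields $\overline{\ca_0\Omega}=\overline{\ca_0''\Omega}\supseteq\overline{\car(\co_2)\Omega}$, and axiom $(5)$ finishes the job.
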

\subsubsection{Free fields \& the G{\aa}rding-Wightman axioms} 
Let us now come back to the free field example. Assume $f$ is in $\cs(M^4,\R)$, $Ef$ is the corresponding element in $L^2(H_m,d\Omega_m)$, and denote by $\phi(f)$ the self-adjoint generator of the one-parameter group $W(\lambda Ef)$, $W(\cdot)$ denoting the Weyl unitary. The map
$$
f\in\cs(M^4)\to \phi(f)
$$ 
is called the free scalar field of mass $m$ The map $\phi$ is usually extended linearly to complex-valued functions.

It satisfies the following properties:

\begin{itemize}
\item[$(A)$] The map $f\to \phi(f)$ is an operator valued tempered distribution.
\item[$(B)$] There is a dense common invariant domain $\cd$ for all fields $\phi(f)$, and $\phi(\overline{f})\subset\phi(f)^*$.
\item[$(B')$] The field operators $\phi(f)$ are essentially self-adjoint on  a dense common invariant domain $\cd$, for real-valued $f$. 
\item[$(C)$] There is a strongly continuous, positive energy unitary representation $U$ of the Poincar\'e group $\cp^\uparrow_+$ satisfying $U(g)\phi(f)U(g)^*=\phi(f_g)$.
\item[$(D)$] There is a unique (up to a multiplicative constant) translation invariant vector $\Omega$, the vacuum vector, contained in $\cd$.
\item[$(E)$] If the supports of $f$ and $g$ are space-like separated, $\phi(f)\phi(g)$ and $\phi(g)\phi(f)$ coincide on $\cd$.
\item[$(E')$] If the supports of $f$ and $g$ are space-like separated, $\phi(f)$ and $\phi(g)$ commute as self-adjoint operators, namely the spectral projections of the former commute with the spectral projections of the latter.
\end{itemize}

Properties $(A), (B), (C), (D)$ and $(E)$ are the so called G{\aa}rding-Wightman axioms for a neutral field of spin zero. They  have been proposed (in the generalized form for charged fields of any spin) as a minimal set of axioms for Quantum Field Theory \cite{SW}.

\begin{remark} 
The relations between the G{\aa}rding-Wightman axioms and the Haag-Kastler axioms have long been investigated. It is not difficult to show that, assuming the extra-axioms $(B')$ and $(E')$, we obtain a net of observable algebras obeying Haag-Kastler axioms, together with the vacuum representation. Conversely there are many papers that tried to recover fields from observables. One problem is that fields are not necessarily observable (they are not gauge-invariant). The problem of the reconstruction of the global gauge group and of the field algebras has been solved by Doplicher-Roberts \cite{DR} in terms of their theory of superselection sectors. The actual reconstruction of fields as operator-valued distributions has been addressed by Fredenhagen and Hertel \cite{FreHa}, Fredenhagen and J\"orss \cite{FreJo}, and Bostelmann\cite{Bostelmann}.
\end{remark}

\section{Bisognano-Wichmann relations.}\label{sec:BW}

The property described by Bisognano and Wichmann in their papers \cite{BW1,BW2} concerns the relation between the modular operators associated with certain space-time regions of the Minkowski space-time in the vacuum representation. We now describe this property in the case of the free scalar field.

\subsection{The Theorem by Bisognano and Wichmann}

\subsubsection{One-particle Bisognano-Wichmann theorem}

Let us consider the so-called right wedge region $W=\{x\in M^4:x_1>|x_0|\}$, and observe that such region is invariant for the one-parameter subgroup $\Lambda_W(t)$ of the Lorentz group
$$
\Lambda_W(t)=\begin{pmatrix}
\cosh (2\pi t)&-\sinh(2\pi t)& 0&0\\
-\sinh (2\pi t)&\cosh (2\pi t)& 0&0\\
0&0&1&0\\
0&0&0&1
\end{pmatrix}
$$

\begin{theorem}\label{thm:1pBW}
Let $K(W)$ be the closed real subspace of the one-particle space for the free scalar field, corresponding to the wedge region $W$.
The modular group and conjugation associated with the space $K(W)$ have a geometric action. More precisely
\begin{equation}\label{1pBW}
j_W={\uni}(r_1,0),\qquad \delta^{it}={\uni}(\Lambda_W(t),0),
\end{equation}
where $r_1$ denotes the (proper, time-reversing) transformation changing sign to the $x_0$ and $x_1$ coordinates.
Making use of the one-particle PCT transformation $\theta$, one may also write $j_W=\theta\cdot {\uni}(R_{23}(\pi),0)$, where $R_{23}$ denotes a rotation on the $(x_2,x_3)-$plane.
\end{theorem}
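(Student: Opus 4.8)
The strategy is to reduce the one-particle Bisognano--Wichmann theorem to an explicit computation in a single variable, exploiting the product structure of the wedge geometry, and then transport the result to position space via the embedding $E$ of Proposition~\ref{poincareaction}. First I would note that the boost subgroup $\Lambda_W(t)$ and the reflection $r_1$ act only on the $(x_0,x_1)$-coordinates, leaving $(x_2,x_3)$ fixed, so that on the Fourier-transformed side the representation ${\uni}$ restricted to these transformations acts on $L^2(H_m,d\Omega_m)$ in a way that is diagonal with respect to the spatial variables $(p_2,p_3)$. Parametrizing the hyperboloid $H_m$ by rapidity in the $1$-direction together with the transverse momentum, i.e.\ writing $p_0=\mu\cosh\chi$, $p_1=\mu\sinh\chi$ with $\mu=\sqrt{m^2+p_2^2+p_3^2}$, the boost ${\uni}(\Lambda_W(t),0)$ becomes the translation $\chi\mapsto\chi-2\pi t$ in the rapidity variable, and $d\Omega_m$ decomposes as $d\chi\, d p_2\, dp_3$ (up to constants); the reflection ${\uni}(r_1,0)$ becomes $\chi\mapsto-\chi$ composed with complex conjugation (from the earlier remark that ${\uni}(\gamma,0)\varphi=\overline\varphi$, combined with the rotation relating $r_1$ to $\gamma$). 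Thus I have reduced everything to the standard subspace of $L^2(\R,d\chi)$ (tensored with the transverse factor) generated by boundary values of functions analytic in a strip, which is the classical computation.

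The heart of the matter is to identify $K(W)$ with the right real subspace. I would characterize $K(W)$ as follows: by $(\ref{localization})$ and the covariance of $E$, $K(W)$ is the closure of $\{Ef : \supp f\subset W\}$, and one shows via the edge-of-the-wedge theorem (or the explicit analyticity of $\widehat f$ for $f$ supported in $W$) that $\varphi\in K(W)$ if and only if the function $t\mapsto {\uni}(\Lambda_W(t),0)\varphi$ extends to a bounded continuous function on the strip $\{0\le\Im t\le 1/2\}$, analytic in the interior, whose value at $t=i/2$ is ${\uni}(r_1,0)\varphi$. In the rapidity picture this is exactly the KMS-type boundary condition characterizing the standard subspace whose modular operator generates the $\chi$-translation. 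One then invokes Proposition~\ref{1pTTthm}: the pair $(\Delta^{it},J)=({\uni}(\Lambda_W(t),0),{\uni}(r_1,0))$ satisfies the algebraic relations $J^2=1$, $J\Delta^{it}J=\Delta^{it}$, and positivity of the generator, and the subspace of vectors $\varphi$ with $s\varphi=\varphi$ (where $s=J\Delta^{1/2}$) coincides with $K(W)$ by the analyticity characterization just described; by the one-to-one correspondence of Proposition~\ref{1pTTthm} this forces $j_{K(W)}=J$ and $\delta_{K(W)}^{it}=\Delta^{it}$, which is $(\ref{1pBW})$.

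The alternative formula for $j_W$ is then a purely group-theoretic rewriting: the product $R_{23}(\pi)\cdot r_1$ is precisely the total spacetime reflection $\gamma$ (it reverses all four coordinates, being the composition of the $(x_0,x_1)$-reversal with the $(x_2,x_3)$-reversal), so ${\uni}(r_1,0)={\uni}(R_{23}(\pi),0)^{-1}{\uni}(\gamma,0)=\Theta\cdot{\uni}(R_{23}(\pi),0)$ after using that $\Theta={\uni}(\gamma,0)$ is the (anti-unitary) PCT operator and that $R_{23}(\pi)$ has order two in the relevant sense; writing $\theta$ for the one-particle PCT transformation gives $j_W=\theta\cdot{\uni}(R_{23}(\pi),0)$. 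I expect the main obstacle to be the rigorous version of the analyticity characterization of $K(W)$ --- i.e.\ showing that the real subspace cut out by the strip-analyticity/boundary condition is exactly the closure of the compactly-supported vectors $Ef$ with $\supp f\subset W$, neither larger nor smaller. This requires care with the edge-of-the-wedge argument and with the density of test functions supported in $W$ among all vectors satisfying the boundary condition; everything else is either the explicit single-variable change of coordinates or a formal consequence of Proposition~\ref{1pTTthm}.
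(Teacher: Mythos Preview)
The paper does not actually give a self-contained proof of Theorem~\ref{thm:1pBW}; it is stated without argument and the text immediately passes to the general Wightman-field result (Theorem~\ref{thm:BW}). The only route to Theorem~\ref{thm:1pBW} that the paper's own material supports is the \emph{indirect} one: apply Theorem~\ref{thm:BW} to the free scalar field (which is a Wightman field satisfying the strengthened axioms), obtaining $J_W=\Theta\cdot U(R_{23}(\pi),0)$ and $\Delta_W^{it}=U(\Lambda_W(t),0)$ on Fock space, and then descend to the one-particle space via Theorem~\ref{thm:2qS}, which says $J_W=e^{j_W}$, $\Delta_W=e^{\delta_W}$, together with $U(g)=e^{u(g)}$. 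Injectivity of second quantization on the one-particle subspace then yields $(\ref{1pBW})$.

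Your proposal is a genuinely different and more direct route: you work entirely on the one-particle space, exploit the rapidity parametrization to reduce the boost to a translation on $L^2(\R,d\chi)$, and characterize $K(W)$ by a strip-analyticity / KMS boundary condition, then invoke Proposition~\ref{1pTTthm}. This is the standard modern approach (essentially that of \cite{BGL3} and \cite{Longobook}) and is logically preferable, since it does not rely on the full Wightman-field machinery of \cite{BW1}. What you gain is a self-contained argument that also makes the later modular-localization section (\S\ref{modloc}) non-circular; what the paper's implicit route gains is brevity, at the cost of importing a much heavier theorem.

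Your identification of the main obstacle is accurate. Showing $K(W)\subset\{\varphi:s\varphi=\varphi\}$ is the straightforward direction (Paley--Wiener type analyticity of $\widehat f$ for $\supp f\subset W$). The reverse inclusion is the delicate one. The cleanest way to close it, given the tools already in the paper, is: prove the same inclusion for $W'$, obtaining $K(W')\subset\{\varphi:s\varphi=\varphi\}'$, hence $K(W)\subset\{\varphi:s\varphi=\varphi\}\subset K(W')'$; then establish one-particle wedge duality $K(W')'=K(W)$ independently (e.g.\ via the time-zero formulation of the free field, where it reduces to duality for half-spaces). Alternatively, one uses that $u(\Lambda_W(t))K(W)=K(W)$ by covariance and invokes the standard-subspace analog of Takesaki's theorem to conclude that a standard subspace invariant under the modular group of a larger standard subspace and sharing its cyclic structure must coincide with it. Either way your plan is sound; just be explicit about which of these closes the equality.
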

\subsubsection{The general case}

Assume $\phi$ is a neutral scalar field satisfying the G{\aa}rding-Wightman axioms in the stronger form
$(A), (B'), (C), (D)$ and $(E')$, and assume also that the field is {\it irreducible}, namely nothing but multiples of the identity commutes with all fields.

For any open region $\co$,  denote by $\car(\co)$ the von~Neumann algebra generated by bounded functional calculi of the fields $\phi(f)$, with supp$f\subset\co$. Then:

\begin{theorem}[Reeh-Schlieder Theorem]\label{RSthm}
For any non-empty open region $\co$, the vacuum vector $\Omega$ is cyclic for the algebra $\car(\co)$.
In particular, if $\co'$ is non-empty, $\Omega$ is a standard vector for $\car(\co)$.
\end{theorem}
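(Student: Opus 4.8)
The plan is to run the classical analytic-continuation argument of Reeh and Schlieder, exploiting the spectrum condition $(A)$--$(D)$ to promote cyclicity from a large region to an arbitrary non-empty open region $\co$. First I would observe that it suffices to prove the following: if a vector $\psi$ is orthogonal to $\car(\co)\Omega$, then $\psi$ is orthogonal to $\car(M^4)\Omega$, which by the vacuum axiom (density of fields applied to $\Omega$, or the analogous additivity statement for the Wightman field) is dense; hence $\psi=0$. Equivalently, I would show that the closed subspace $\overline{\car(\co)\Omega}$ is translation invariant. Since the $\car(\co)$ are generated by bounded functional calculi of the smeared fields, and by continuity of the field operators it is enough to work with products of field operators applied to the vacuum, the key object is the vector-valued function
\begin{equation*}
F(a_1,\dots,a_n)=\big(\psi,\ \phi(f_1)_{a_1}\cdots\phi(f_n)_{a_n}\,\Omega\big),
\end{equation*}
where $f_j$ have supports in $\co$ and $\phi(f)_a:=U(\tau_a)\phi(f)U(\tau_a)^*=\phi(f_{\tau_a})$ denotes the $a$-translate. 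For small enough real translations $a_1,\dots,a_n$ the supports stay inside $\co$, so $F\equiv0$ there; the goal is to conclude $F\equiv0$ for all real arguments.

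The key steps, in order, are: (i) insert the translation unitaries explicitly, writing $\phi(f_1)_{a_1}\cdots\phi(f_n)_{a_n}=U(\tau_{a_1})\phi(f_1)U(\tau_{a_2-a_1})\phi(f_2)\cdots U(\tau_{a_n-a_{n-1}})\phi(f_n)U(\tau_{a_n})^*$, use $U(\tau_{a_n})^*\Omega=\Omega$ and reorganize $F$ as a function of the difference variables $\xi_k=a_{k+1}-a_k$ (plus an overall translation acting on $\psi$, which I can absorb); (ii) use the spectrum condition, i.e. that the joint spectrum of the translation generators $P$ lies in $\overline V_+$, to show that each factor $e^{i\xi_k\cdot P}$ applied to the relevant vector extends to a bounded-continuous function of $\xi_k$ on the tube $\R^4+i V_+$, holomorphic in the interior, via the representation $e^{i(\xi_k+i\eta_k)\cdot P}=e^{i\xi_k\cdot P}e^{-\eta_k\cdot P}$ with $\eta_k\in V_+$; (iii) conclude that $F$, as a function of the several complex variables $\zeta_k=\xi_k+i\eta_k$, is holomorphic in the tube $\mathcal T=\{(\zeta_1,\dots,\zeta_{n-1}): \Im\zeta_k\in V_+\}$ and continuous up to the real boundary; (iv) since $F$ vanishes on a non-empty open subset of the real boundary $\R^{4(n-1)}$ (the small-translation region), invoke the edge-of-the-wedge theorem (or, more elementarily, iterated one-variable analyticity together with the Schwarz reflection / identity theorem) to conclude $F\equiv0$ on $\mathcal T$ and hence on all of $\R^{4(n-1)}$; (v) unwind this to get $\psi\perp\car(M^4)\Omega$, hence $\psi=0$, giving cyclicity of $\car(\co)$. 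The final assertion, that $\Omega$ is separating for $\car(\co)$ whenever $\co'\neq\emptyset$, then follows from locality $(E')$: $\car(\co')$ commutes with $\car(\co)$, and cyclicity of $\car(\co')$ (just proved, since $\co'$ is open non-empty) forces $\Omega$ to be separating for $\car(\co)$.

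The main obstacle is step (iv): having a vector-valued holomorphic function of several complex variables on a tube over a cone that vanishes on a real open set, and wanting to conclude it vanishes identically. In one complex variable this is immediate from the identity theorem after a Schwarz-type reflection, but with $n-1$ independent $\R^4$-valued variables one genuinely needs either the several-variable edge-of-the-wedge theorem or a careful inductive argument reducing to the one-variable case along complex lines while keeping track of the analyticity domain at each stage; making the analyticity and boundary-continuity uniform enough to justify this reduction is the delicate technical point. I would cite the standard treatment in Streater--Wightman \cite{SW} for the edge-of-the-wedge input rather than reproving it. A secondary point requiring care is the passage from the bounded functional calculi generating $\car(\co)$ back to the unbounded field operators: one should note that it is enough to test orthogonality against vectors of the form $\phi(f_1)\cdots\phi(f_n)\Omega$ because these span a core-dense set inside $\car(\co)\Omega$ (using that $\Omega$ lies in the common invariant domain $\cd$ of axiom $(B')$ and that bounded functions of $\phi(f)$ can be strongly approximated so as to reproduce $\phi(f)\Omega$), so that vanishing of all such $F$ indeed implies $\psi\perp\car(\co)\Omega$.
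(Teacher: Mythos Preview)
The paper does not give its own proof of this theorem but simply refers the reader to Streater--Wightman \cite{SW}; your proposal is precisely the classical analytic-continuation argument (spectrum condition $\Rightarrow$ tube holomorphy $\Rightarrow$ edge-of-the-wedge) that is presented there, so you are in full agreement with the cited source. Your identification of the two delicate points---the several-variable edge-of-the-wedge input and the passage between bounded functional calculi and polynomials in the unbounded fields---is accurate, and citing \cite{SW} for the former is exactly what the paper does.
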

For a proof, see e.g. \cite{SW}.

\begin{theorem}[Bisognano-Wichmann Theorem \cite{BW1}]\label{thm:BW}
If $J_W$, $\Delta_W$ denote the modular operators for the pair $(\car(W),\Omega)$, then 
\begin{equation}\label{BWrel}
J_W=\Theta\cdot U(R_{23}(\pi),0),\qquad \Delta^{it}=U(\Lambda_W(t),0).
\end{equation}
Moreover, wedge duality holds, namely $\car(W)'=\car(W')$.
\end{theorem}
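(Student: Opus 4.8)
The strategy is to transfer the one-particle Bisognano-Wichmann result of Theorem~\ref{thm:1pBW} to the Fock space by means of the second-quantization functor, exploiting the fact that for a free field the algebra $\car(W)=\car(K(W))$ is a second quantization algebra over the standard subspace $K(W)\subset\ch=L^2(H_m,d\Omega_m)$. The first step is to verify that the hypotheses of Theorem~\ref{thm:2qS} are met: by the Reeh-Schlieder Theorem~\ref{RSthm} (applicable since $W'$ is nonempty) the vacuum $\Omega=e^0$ is standard for $\car(W)$, equivalently by Property~$(a)$ the subspace $K(W)$ is standard. Hence Theorem~\ref{thm:2qS} applies and gives $J_W=e^{j_W}$ and $\Delta_W^{it}=e^{i\delta_W}{}^{\,it}=e^{\delta_W^{it}}$, where $j_W$, $\delta_W$ are the modular conjugation and modular operator of the \emph{one-particle} space $K(W)$, and wedge duality $\car(W)'=\car(W')$ follows from Property~$(b)$ together with $K(W)'=K(W')$ (Theorem~\ref{lattices1}(i), since a wedge is a union of double cones and the symplectic complement is computed accordingly).

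Next I would feed in the one-particle geometric identification. Theorem~\ref{thm:1pBW} states $\delta_W^{it}={\uni}(\Lambda_W(t),0)$ and $j_W=\theta\cdot{\uni}(R_{23}(\pi),0)$, where $\theta$ is the one-particle PCT operator. Applying $e^{(\cdot)}$ and using functoriality of second quantization — $e^{ab}=e^ae^b$ on the relevant cores, and $e^{{\uni}(g)}=U(g)$ by the definition $U(g)=e^{{\uni}(g)}$ of the Fock representation — yields $\Delta_W^{it}=e^{{\uni}(\Lambda_W(t),0)}=U(\Lambda_W(t),0)$ directly. For the conjugation, $J_W=e^{j_W}=e^{\theta\cdot{\uni}(R_{23}(\pi),0)}=e^{\theta}\cdot e^{{\uni}(R_{23}(\pi),0)}=\Theta\cdot U(R_{23}(\pi),0)$, once one records that the Fock second quantization of the one-particle PCT operator $\theta$ (an anti-unitary) is precisely the PCT operator $\Theta$ on $e^\ch$ — this is essentially the definition of $\Theta$ as $e^\theta$, consistent with the anti-unitary/second-quantization discussion in Section~\ref{1pSp}. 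That produces exactly the asserted relations~\eqref{BWrel}.

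\textbf{Main obstacle.} The genuinely substantive input is Theorem~\ref{thm:1pBW}, the one-particle statement, whose proof is not in this excerpt; granting it, the Fock-space passage is essentially formal. Within that formal passage the one delicate point is the interchange of second quantization with products and polar decompositions: one must check that $S_W=e^{s_W}$ by showing $S_W\supseteq e^{s_W}$ via invariance of symmetrized tensors of vectors in $K(W)$ (Lemma~\ref{lemma:core}(iii)) and the reverse inclusion via the core property (Lemma~\ref{lemma:core}(ii)) together with Lemma~\ref{lemma:coherentvectors}, and then invoke uniqueness of the polar decomposition of the invertible operator $S_W$ (with $e^{j_W}$ anti-unitary by Lemma on second quantization of anti-unitaries, $e^{\delta_W^{1/2}}$ positive self-adjoint by Lemma~\ref{lemma:sa}) to conclude $J_W=e^{j_W}$, $\Delta_W=e^{\delta_W}$ — but all of this is exactly the content of Theorem~\ref{thm:2qS} and Property~$(b)$, already available. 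An alternative, more self-contained route to $\Delta_W^{it}=U(\Lambda_W(t),0)$ would bypass Theorem~\ref{thm:1pBW} and argue on the Fock space directly: one shows $U(\Lambda_W(t),0)$ implements a one-parameter automorphism group of $\car(W)$ fixing $\Omega$ and satisfying the KMS condition at inverse temperature $-1$ with respect to the vacuum state on $\car(W)$, and then appeals to the uniqueness of the modular group (the KMS characterization); the hard part there is verifying the KMS/analyticity property, which is where the spectrum condition $(4)$ and the geometry of the wedge enter, and which is ultimately where the original Bisognano-Wichmann analysis of Wightman functions does its work. I would present the functorial argument as the main line, since it is shortest given what precedes.
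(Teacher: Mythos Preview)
Your argument is correct for the \emph{free} scalar field, but Theorem~\ref{thm:BW} as stated in the paper applies to an arbitrary irreducible neutral scalar Wightman field satisfying axioms $(A),(B'),(C),(D),(E')$, not only to the free one. In that generality there is no one-particle space, no identification $\car(W)=\car(K(W))$, and hence no second-quantization functor to invoke: the lift from Theorem~\ref{thm:1pBW} to the Fock algebra, while a legitimate proof of the free-field special case, simply does not address the statement at hand. The paper itself does not supply a proof of Theorem~\ref{thm:BW} but cites \cite{BW1} (and \cite{Rigotti} for a simplification); the Bisognano--Wichmann argument works directly with the Wightman $n$-point functions, using the spectrum condition to obtain analytic continuation into the extended tube and then identifying the boundary value reached via the boost analytic continuation $t\mapsto t+\tfrac{i}{2}$ with the action of the Tomita operator $S_W$ on polynomials in the fields applied to the vacuum. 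This is exactly the ``hard part'' you mention at the end as an alternative route, and in the interacting case it is the whole content of the theorem, not an optional variant.

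A minor technical remark on the free-field part of your argument: your appeal to Theorem~\ref{lattices1}(i) for $K(W)'=K(W')$ is formally misplaced, since that result is stated for bounded simply connected regions with regular boundary, and a wedge is unbounded. The cleanest route is to use the one-particle Bisognano--Wichmann theorem itself: by Proposition~\ref{1pTTthm}(ii) one has $j_WK(W)=K(W)'$, while by Theorem~\ref{thm:1pBW} the conjugation $j_W={\uni}(r_1,0)$ acts geometrically, so $j_WK(W)=K(r_1W)=K(W')$; hence $K(W)'=K(W')$, and then $\car(W)'=\car(K(W)')=\car(K(W'))=\car(W')$ by Property~$(b)$.
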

A somewhat simpler proof is contained in \cite{Rigotti}.
The generalization to non-necessarily scalar fields was given in \cite{BW2}.

\medskip
Up to now, the only region for which  the modular objects have been proved to have a geometrical meaning is the right wedge. However,  the vacuum is invariant under  the Poincar\'e group. Therefore, denoting by $W_R$ the right wedge, the modular operator $S_{gW_R}$ for the pair $(\car(gW_R),\Omega)$ coincides with $U(g)S_{W_R}U(g)^*$. We then call wedge any element of the set $\cw:=\{gW_R,g\in\cp^\uparrow_+\}$, and observe that for any $W\in\cw$ the corresponding modular operators have a geometrical meaning; setting $\Lambda_{gW_R}(t)=g\Lambda_{W_R}(t)g^{-1}$, $r_{gW_R}=gr_{W_R}g^{-1}$, we get $J_W=U(r_W)$, $\Delta_W^{it}=U(\Lambda_W(t))$.

Given a net $\co\to\car(\co)$, $\co$ double cone, we may set $\car(\cc)=\vee_{\co\subset\cc}\car(\co)$ for a general open region $\cc$. Then the dual net is defined by $\car^d(\co)=\car(\co')'$. The dual net does not necessarily  satisfy locality, but if it does, it clearly satisfies Haag duality:
$\car(\co)\subset\car(\co')'$ implies $\car^d(\co')'=\car(\co)\subset\car(\co')'=\car^d(\co)$, on the other hand locality for $\car^d(\co)$ gives the reverse inclusion. The net $\co\to\car(\co)$ is said to satisfy {\it essential duality} if $\co\to\car^d(\co)$ is local (hence dual) for double cones. The following result can be found e.g. in \cite{Rigotti}.

\begin{corollary} \label{essdual}
With the assumptions of the present section, the net $\co\to\car(\co)$ satisfies essential duality.
\end{corollary}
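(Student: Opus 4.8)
The plan is to reduce essential duality for the net of double cones to wedge duality, which is available from Theorem \ref{thm:BW}. The key structural fact I would use is that every double cone $\co$ can be written as an intersection of wedges, $\co=\bigcap_{W\supseteq\co} W$, where $W$ ranges over the wedges in $\cw=\{g W_R:g\in\cp^\uparrow_+\}$ containing $\co$; dually, the causal complement $\co'$ of a double cone is the union of the causal complements $W'$ of those wedges, and since $W'$ is again a wedge, $\co'=\bigcup_{W'\subseteq\co'}W'$. Because the net satisfies isotony and locality and comes from the free-field (or Wightman) construction, one has $\car(W)\subseteq\car(\co')'$ for every wedge $W\supseteq\co$, hence $\car^d(\co)=\car(\co')'\supseteq\bigvee_{W\supseteq\co}\car(W)$. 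The heart of the matter is the reverse: to show $\car^d(\co)$ is contained in $\bigvee_{W\supseteq\co}\car(W)$, equivalently $\car(\co')=\bigcap_{W\supseteq\co}\car(W)'=\bigcap_{W\supseteq\co}\car(W')$, and then to deduce locality of $\co\mapsto\car^d(\co)$.

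First I would establish the wedge-intersection property at the level of algebras: $\bigvee_{W'\subseteq\co'}\car(W')=\car(\co')$. One inclusion is isotony together with $W'\subseteq\co'$; the other uses additivity of the net (each double cone contained in $\co'$ is covered by wedges contained in $\co'$, and $\car$ of a covered region is the join of the pieces — here I invoke the additivity/definition $\car(\cc)=\vee_{\co\subset\cc}\car(\co)$ for general open $\cc$ stated just before the corollary). Next, taking commutants and using wedge duality $\car(W)'=\car(W')$ from Theorem \ref{thm:BW} (transported by $U(g)$ to all $W\in\cw$, as noted in the paragraph preceding the corollary), I get
\[
\car^d(\co)=\car(\co')'=\Bigl(\bigvee_{W'\subseteq\co'}\car(W')\Bigr)'=\bigcap_{W'\subseteq\co'}\car(W')'=\bigcap_{W'\subseteq\co'}\car(W),
\]
i.e.\ $\car^d(\co)=\bigcap_{W\supseteq\co}\car(W)$. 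Then, for double cones $\co_1\subseteq\co_2'$, every wedge $W\supseteq\co_1$ and every wedge $\widetilde W\supseteq\co_2$ can be chosen with $W\subseteq\widetilde W'$ (this is the geometric input: spacelike-separated double cones can be separated by spacelike-separated wedges), so $\car(W)\subseteq\car(\widetilde W)'$ by wedge locality, and intersecting gives $\car^d(\co_1)\subseteq\car^d(\co_2)'$, which is exactly locality — hence Haag duality — for the dual net.

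The step I expect to be the main obstacle is the purely geometric separation lemma: given two spacelike-separated double cones $\co_1,\co_2$ in $M^4$, one must produce a wedge $W$ with $\co_1\subseteq W$ and $\co_2\subseteq W'$ (equivalently, exhibit $\co'$ as a union of wedges in a way compatible with the covering needed for additivity). This is elementary Minkowski geometry — translate and boost a standard wedge so that its edge separates the two double cones — but it is the place where the concrete structure of $M^4$ and of the family $\cw$ is genuinely used, and it must be done with enough uniformity that the additivity argument (covering $\co'$ by wedges, each of which is in turn exhausted by double cones) goes through. Everything else is a formal manipulation of von~Neumann algebras (commutants, joins, isotony) together with the already-cited wedge duality and Reeh--Schlieder standardness of $\Omega$, so I would only sketch those and concentrate the writeup on the wedge-covering geometry.
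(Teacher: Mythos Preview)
Your overall strategy coincides with the paper's: express $\car^d(\co)$ as $\bigcap_{W\supset\co}\car(W)$ via wedge duality, then use the wedge-separation lemma for spacelike-separated double cones to conclude locality of the dual net. The computation and the final step match. However, you have skipped the point the paper isolates as the first and essential step.

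The problem is that there are a priori \emph{two} meanings of $\car(W)$ for a wedge $W$: the von~Neumann algebra generated by the fields $\phi(f)$ with $\supp f\subset W$ (the algebra for which Theorem~\ref{thm:BW} furnishes wedge duality), and the algebra $\bigvee_{\co\subset W}\car(\co)$ produced by the definition $\car(\cc)=\bigvee_{\co\subset\cc}\car(\co)$ (the one that actually appears when you unfold $\car(\co')$). Your chain
\[
\car(\co')=\bigvee_{W'\subset\co'}\car(W')\qquad\text{and then}\qquad \car(W')'=\car(W)
\]
silently identifies the two. If $\car(W')$ means the field-generated algebra, the first equality in the direction you call ``isotony'' is exactly the assertion that the field algebra of $W'$ sits inside $\bigvee_{\co_1\subset\co'}\car(\co_1)$, which is what has to be proved. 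If instead $\car(W')$ means the double-cone-generated algebra, that inclusion is trivial, but then $\car(W')'=\car(W)$ is not what Theorem~\ref{thm:BW} gives you; one only obtains $\car(W')'\supseteq\car(W)$, and this is the wrong direction for the final locality argument.

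The paper closes this gap by proving $\bigvee_{\co\subset W}\car(\co)=\car(W)$ directly. The argument is short but uses modular theory in an essential way: the left-hand side is contained in the right-hand side and is globally invariant under $\Delta_W^{it}$, because by Theorem~\ref{thm:BW} the modular group acts geometrically and preserves $W$; Takesaki's theorem then yields a vacuum-preserving conditional expectation onto the subalgebra, and cyclicity of $\Omega$ forces it to be the identity. This is the genuine obstacle in the proof; the wedge-separation geometry you flagged as the ``main obstacle'' is, as you yourself say, elementary.
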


\begin{proof}
We first note that the two a-priori different notions for $\car(W)$ actually coincide, namely the von~Neumann algebra generated by fields localized in $W$ coincides with the algebra $\vee_{\co\subset W}\car(\co)$. Indeed, $\vee_{\co\subset W}\car(\co)$ is contained in $\car(W)$, and is globally invariant under the action of the modular group of $\car(W)$, since the group acts geometrically.
By a theorem of Takesaki, we get a conditional expectation from $\car(W)$ to the invariant subalgebra. Since the vacuum is cyclic for the latter, the conditional expectation is the identity, namely the two algebras  coincide. 

Let us observe that if  $\co$ and $\co_0$ are space-like separated double cones, there exists a wedge $W$ such that $\co\subset W\subset\co_0'$. Then, for any double cone $\co_0$ we get, by wedge duality,
\begin{align*}
\car^d(\co_0)&=\car(\co_0')'=\left(\bigvee_{\co\subset\co_0'}\car(\co)\right)'
=\left(\bigvee_{W\subset\co_0'}\car(W)\right)'\\
&=\bigwedge_{W\subset\co_0'}\car(W)'
=\bigwedge_{W\supset\co_0}\car(W).
\end{align*}
Finally, if  $\co_0$ and $\co_1$ are space-like separated double cones, we find a wedge $W$ such that $\co_0\subset W$ and $\co_1\subset W'$, hence the corresponding algebras $\car^d(\co_0)$, $\car^d(\co_1)$ commute, namely $\car^d$ is local.
\end{proof}

\begin {remark}
A striking result in quantum field theory is the relation between the statistical behavior of quantum particles, which may be described either by the Bose-Einstein statistics or by the Fermi-Dirac statistics, and is manifested by either the commutation or the anticommutation relations for fields at space-like distance, and the integer or half-integer values for the spin, corresponding to the symmetry group being truly represented, or represented up to a phase, namely the appearance of a representation of the symmetry group or of its universal covering. In the Wightmann framework the proof of this connection follows by the implementability of the PCT symmetry by the operator $\Theta$ (the so called PCT theorem, cf. \cite{SW}). As shown by the Bisognano-Wichmann theorem, such PCT operator is related to the modular conjugation $J_W$.  In the algebraic setting, the geometrical meaning of $J_W$ is  indeed the base for a proof of the connection between spin and statistics (cf. \cite {GuLo2,GuLo3,KuckSpin}).
\end{remark}
\subsubsection{The conformal case}

As shown above, the larger is the symmetry group the larger is the family of regions for which the modular objects have a geometric meaning. This observation produced an important result of Hislop and Longo.

Let us recall that on a (semi)-Riemannian manifold, a conformal transformation is a transformation which preserves the metric tensor up to a scalar function. In dimension $\geq3$, the conformal group is a finite dimensional Lie group; for the space $M^4$ its identity component is generated by the proper Poincar\'e group and the relativistic ray inversion transformation $x\to\frac{x}{x\cdot x}$. A quantum field theory on $M^4$ is conformal if the identity component of the conformal group acts as the symmetry group. We note here that conformal transformations are singular on some submanifolds. The way to treat this problem is to extend the theory to a suitable compactification of the space-time (or better to its universal covering). For a detailed description of this procedure see \cite{BGL1}.

\begin{theorem}[Hislop-Longo \cite{HL}]
Assume $\phi$ is an irreducible neutral scalar field satisfying the G{\aa}rding-Wightman axioms in the stronger form
$(A), (B'), (C'), (D')$ and $(E')$, where
\item[$(C')$] There is a strongly continuous, positive energy unitary representation $U$ of the conformal group  satisfying $U(g)\phi(f)U(g)^*=\phi(f_g)$.
\item[$(D')$] There is a unique (up to a multiplicative constant) translation invariant vector $\Omega$, the vacuum vector, contained in $\cd$, which is also conformally invariant.

Then the modular conjugations and groups of the von~Neumann algebras associated with wedges, double cones, and forward and backward light cones have a geometric meaning, namely for any such region $\cc$ there is a conformal reflection $r_\cc$ and a one parameter group $\Lambda_\cc$ of conformal transformations such that
\begin{equation}\label{confBWrel}
J_\cc=U(r_\cc),\qquad \Delta_\cc^{it}=U(\Lambda_\cc(t)).
\end{equation}
\end{theorem}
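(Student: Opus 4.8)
The plan is to reduce the conformal statement to the Bisognano–Wichmann theorem (Theorem~\ref{thm:BW}) by exploiting transitivity of the conformal group on the relevant families of regions, exactly as was done in the excerpt for wedges under the Poincar\'e group. First I would record the key group-theoretic input: the identity component of the conformal group of $M^4$ acts transitively on the set of wedges, on the set of double cones, and on the (two-element) set of forward/backward light cones; moreover a double cone and a light cone are both conformal images of a wedge $W$. Concretely, the relativistic ray inversion $x\mapsto x/(x\cdot x)$ carries the right wedge to a double cone, and composing with translations and boosts one reaches every double cone; a similar chain reaches the light cones. This is where one should cite \cite{HL,BGL1} for the explicit transformations and for the care needed because conformal maps are singular on certain submanifolds, which forces one to work on (the covering of) the conformal compactification.

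Next, for a region $\cc=gW$ with $g$ in the conformal group, I would define $r_\cc:=g r_W g^{-1}$ and $\Lambda_\cc(t):=g\,\Lambda_W(t)\,g^{-1}$, noting that these are again conformal transformations and that $\Lambda_\cc$ is a one-parameter group fixing $\cc$. By conformal covariance $(C')$ and conformal invariance of the vacuum $(D')$ we have $U(g)\car(W)U(g)^*=\car(\cc)$ and $U(g)\Omega=\Omega$ (the latter because $g\Omega$ is, up to the usual perfectness/uniqueness argument, proportional to $\Omega$). Hence the Tomita operator transforms covariantly, $S_\cc=U(g)S_W U(g)^*$, and taking polar decompositions gives $J_\cc=U(g)J_W U(g)^*$ and $\Delta_\cc^{it}=U(g)\Delta_W^{it}U(g)^*$. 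Feeding in the Bisognano–Wichmann identities $J_W=\Theta\cdot U(R_{23}(\pi))$ and $\Delta_W^{it}=U(\Lambda_W(t))$ from Theorem~\ref{thm:BW}, the second relation immediately yields $\Delta_\cc^{it}=U(g\Lambda_W(t)g^{-1})=U(\Lambda_\cc(t))$, and the first yields $J_\cc=U(g)\Theta U(g)^*\cdot U(g R_{23}(\pi)g^{-1})$. One then checks that $U(g)\Theta U(g)^*$ is again the PCT-type conjugation associated with $\cc$, so that $J_\cc$ implements the conformal reflection $r_\cc$; this is the statement $J_\cc=U(r_\cc)$ once one absorbs the rotation into the reflection as in the wedge case.

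I expect two points to require the most care. The first is genuinely geometric: making precise what ``$\car(\cc)$ for a light cone or double cone'' means and why it is the $U(g)$-image of $\car(W)$ --- this needs essential duality (Corollary~\ref{essdual}) together with an additivity/covariance argument so that the conformally transported algebra agrees with the one generated by fields localized in $\cc$, and it needs the extension of the net to the conformal completion so that $U(g)$ is globally defined even when $g$ is singular on the naive Minkowski picture. The second, more delicate, point is the behaviour of the antiunitary $\Theta$: one must verify that $U(g)\Theta U(g)^*$ is still antiunitary, still implements a conformal reflection, and is independent of the chosen $g$ with $gW=\cc$, i.e. that the recipe $r_\cc,\Lambda_\cc$ does not depend on the representative --- this follows because the stabilizer of $W$ in the conformal group commutes with $S_W$ (again by the geometric action of the modular group, via the Takesaki argument used in the proof of Corollary~\ref{essdual}). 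Granting these, equation~\eqref{confBWrel} drops out of the covariance of the modular data under $U$.
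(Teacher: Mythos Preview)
Your proposal is correct and follows essentially the same route as the paper's sketch: reduce to the wedge case by showing that double cones and light cones are conformal images of wedges (the paper gives the explicit maps---ray inversion applied to a translated right wedge yields a double cone, and ray inversion applied to a suitable double cone yields a forward light cone), then transport the Bisognano--Wichmann identities via $S_\cc=U(g)S_W U(g)^*$ using conformal covariance and conformal invariance of the vacuum. Your additional caveats about working on the conformal compactification and about representative-independence are well taken and in fact go beyond what the paper's sketch spells out; the paper simply defers those issues to \cite{BGL1}.
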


\begin{proof} [Sketch of the proof]
It is enough to show that double cones and forward and backward cones may be obtained by wedges via conformal transformation. Indeed, applying the  relativistic ray inversion transformation to the right wedge translated by the vector $(0,r/2,0,0)$ one gets the double cone whose basis is the ball of radius $1/r$ and center $(0,-1/r,0,0)$ in the hyperplane $x_0=0$. All other double cones are obtained via Poincar\'e transformations. Applying the ray inversion transformation to the double cone  whose basis is the ball of radius $r$ and center $(r,0,0,0)$ in the hyperplane $x_0=r$ one gets the forward cone based on the point $(1/2r,0,0,0)$. All other cones can be obtained via translations and space-time reflections.
\end{proof}

\subsubsection{The case of the de Sitter space}\label{desitter} 
Instead of changing only the symmetry group, one can change the spacetime itself, and study the geometrical meaning of modular operators on different spacetimes. The four-dimensional de Sitter space may be seen as the hyperboloid $\{(t,\vec{x})\in M^5:\vec{x}^2=R^2+t^2\}$ in the five-dimensional Minkowski space-time. The intersections of the hyperboloid with the wedges of $M^5$ whose edge contains the origin play the r\^ole of the wedges, and the Lorentz group plays the r\^ole of the symmetry group. An analog of the Bisognano-Wichmann theorem for the de Sitter space has been proven in \cite{BrEpMo}, where the  spectrum condition, which is unavailable in de Sitter space since there are no translations, is replaced by analyticity properties of the $n$-point functions.

\subsection{Borchers theorem and BW relations for conformal nets}
For many years, the Theorem by Bisognano and Wichmann was a kind of paradoxical result: while its formulation is very natural in the Haag-Kastler formalism, its proof was given only in the G{\aa}rding-Wightman setting. The first main result in the algebraic formalism is due to Borchers, and is of a quite abstract nature.

\begin{theorem}[Borchers \cite{Borch92}]\label{Borch92}
Let $\car$ be a von~Neumann algebra with a standard vector $\Omega$, and $U(a)$ a one-parameter group of unitaries with positive generator  leaving $\Omega$ fixed and such that, for $a\geq0$, $U(a)\car U(a)^*\subset\car$. The the following commutation relations between the modular operators $\Delta$ and $J$ for $(\car,\Omega)$ and $U(a)$ hold:

\begin{equation}
\Delta^{it}U(a)\Delta^{-it}=U(e^{-2\pi t}a),\quad JU(a)J=U(-a),\quad t,a\in\R.
\end{equation}

\end{theorem}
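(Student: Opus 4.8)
The plan is to reduce the statement to an analyticity argument for vector-valued functions, exploiting the KMS property of the modular flow together with the one-sided invariance of $\car$ under the positive-energy translations $U(a)$. First I would set $\sigma_t = \mathrm{Ad}\,\Delta^{it}$ and, for fixed $a > 0$ and $x \in \car$, study the function $t \mapsto \sigma_t(U(a) x U(-a))$. By the hypothesis $U(a)\car U(a)^* \subset \car$ for $a \ge 0$, this lies in $\car$ for every real $t$, so we may apply it to $\Omega$ and pair with vectors from $\car'\Omega$; the KMS condition for $(\car,\Omega)$ then gives boundedness and analyticity of suitable matrix elements in the strip $\{0 < \Im t < 1/2\}$ (or, after the standard rescaling, in a strip determined by the $2\pi$ in the exponent). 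The key point is that the generator $P$ of $U$ is positive, so the function $a \mapsto U(a)$ extends analytically to the upper half-plane $\Im a > 0$; combining the two analyticity domains — one coming from modular theory, one from the spectrum condition — over the region where both make sense forces a rigid relation between them.

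Concretely, the standard device (due to Borchers, streamlined later by Florig and by Wiesbrock) is to consider, for $\xi,\eta$ in suitable dense domains, the function
\begin{equation}
F(t,a) = \langle \xi,\ \Delta^{it} U(a) \Delta^{-it}\,\eta\rangle,
\end{equation}
show it is separately analytic in $t$ on a strip and in $a$ on a half-plane, use a Malgrange–Zerner (``flat tube'') type theorem to get joint analyticity on the product domain, and then compute boundary values. On one face one reads off $\Delta^{it} U(a) \Delta^{-it}$; analytically continuing $t$ to the edge $\Im t = 1/2$ and using that $\Delta^{1/2}$ is (formally) implemented by $J$ via $S = J\Delta^{1/2}$, together with $J\Omega = \Omega$ and $J\car J = \car'$, one obtains that the continued function is again of the form $\langle\,\cdot\,, U(a')\,\cdot\,\rangle$ with the parameter rescaled by $e^{-2\pi t}$. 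Matching the two analytic functions on their common domain yields $\Delta^{it}U(a)\Delta^{-it} = U(e^{-2\pi t}a)$ for $a \ge 0$; since both sides are one-parameter groups in $a$, this extends to all $a \in \R$. The relation $JU(a)J = U(-a)$ then follows by continuing $t \to t + i/2$ in the group relation just proved, or equivalently by applying it with the substitution that sends $\Delta^{it}$ past the edge of the strip.

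An alternative, more algebraic route I would keep in reserve is to work directly with the one-parameter groups: set $U(t) = \Delta^{it}$, $V(a) = U(a)$, and observe that the hypotheses say exactly that $a \mapsto \Delta^{it} U(a)\Delta^{-it}$ is, for each $t$, a positive-generator one-parameter group whose generator is $e^{?}$ times that of $U$; showing the exponent is linear in $t$ with slope $-2\pi$ amounts to a Lie-algebraic/commutator computation that can be made rigorous using the analytic vectors furnished by the positivity of $P$ and of $\log\Delta$ on the appropriate half-lines. Either way, the main obstacle — and the part that requires genuine care rather than bookkeeping — is the joint-analyticity step: one must choose the dense sets of vectors $\xi,\eta$ so that $F(t,a)$ is simultaneously controlled in both variables, verify the boundedness needed to invoke the edge-of-the-wedge / flat-tube theorem, and handle the fact that $\Delta^{it} U(a)\Delta^{-it}$ need not stay inside $\car$ once $a$ is continued off the real axis, so that the KMS estimates have to be applied with some delicacy (typically by first proving the identity for $x$ in a weakly dense subalgebra of analytic elements and then passing to the limit).
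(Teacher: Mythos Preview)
Your outline captures the two essential analytic inputs --- strip analyticity coming from the modular/KMS structure and upper-half-plane analyticity coming from the positivity of the generator of $U$ --- and the idea of forcing a rigid relation by combining them. That is indeed the heart of the matter, and what you describe (two complex variables, a flat-tube/Malgrange--Zerner argument, then reading off the scaling at a boundary face) is close in spirit to Borchers' original proof. It is not, however, the argument the paper gives: the paper follows Florig's simplification, which is both more elementary and more explicit about where the factor $e^{-2\pi t}$ comes from.

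The paper collapses everything to a \emph{single} complex variable via the ansatz
\[
f_U(z)=\bigl(\Delta^{-i\overline z}x'\Omega,\ U(e^{2\pi z}a)\,\Delta^{-iz}x\Omega\bigr),\qquad x\in\car,\ x'\in\car',\ a\ge 0,
\]
on the closed strip $0\le\Im z\le\tfrac12$. The substitution $a\mapsto e^{2\pi z}a$ sends the strip into the closed upper half-plane of the translation parameter, so positivity of the generator handles the middle factor; the outer factors are controlled because $x\Omega\in\cd(\Delta^{1/2})$ and $x'\Omega\in\cd(\Delta^{-1/2})$. The second device --- absent from your sketch --- is to introduce $V(a)=JU(-a)J$, verify that $V$ satisfies the \emph{same} hypotheses as $U$ (positive generator, fixes $\Omega$, implements endomorphisms of $\car$ for $a\ge0$), and compute directly that $f_U(t+\tfrac{i}{2})=f_V(t)$ and, by the involution $U\leftrightarrow V$, $f_V(t+\tfrac{i}{2})=f_U(t)$. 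Gluing alternate copies of $f_U$ and $f_V$ across the horizontal lines $\Im z=\tfrac{m}{2}$ produces a bounded entire function of period $i$, hence constant by Liouville, and both commutation relations drop out at once from $f(t)=f(0)=f(i/2)$. No multivariable complex analysis is needed, and the exponent $-2\pi$ is built into the ansatz rather than left to be discovered at the boundary; in your sketch that emergence is precisely the step you flag as ``the part that requires genuine care'' but do not actually carry out.
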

The following proof is due to Florig \cite{Florig}.
\begin{proof}
Set $V(a)=JU(-a)J$. 
Let us observe that $V(a)\Omega=JU(-a)J\Omega=\Omega$, and, if $x\in\car, x'\in\car'$, $a\geq0$,
\begin{align*}
[V(a)xV(a)^*,x']
&=V(a)[x,V(a)^*x'V(a)]V(a)^*\\
&=V(a)[x,JU(a)(J x'J)U(a)^*J]V(a)^*=0,
\end{align*}
since $(J x'J)\in\car$, and $U(a)$ implements endomorphisms of $\car$ for $a\geq0$. Moreover, if $H$ denotes the selfadjoint generator of the one-parameter group $U$, the selfadjoint generator of $V$ is given by
$$
-i\frac{d}{da}V(a)=-i\frac{d}{da}Je^{-iaH}J=-iJ(-iH)J=JHJ,
$$
namely $V$ has a positive generator, too. This shows that $V$ has the same properties of $U$.

Now, for $x\in\car$, $x'\in\car'$, $0\leq\Im z\leq\frac12$, set
$$
f_U(z)=(\Delta^{-i\overline{z}}x'\Omega,U(e^{2\pi z}a)\Delta^{-iz}x\Omega),\qquad a\geq0.
$$
Let us prove that $f$ is continuous and bounded in the closed strip $0\leq\Im z\leq\frac12$, and analytic in the open strip $0<\Im z<\frac12$. Indeed, $e^{-2\pi z}a$ belongs to the upper half-plane, where $U$, having a positive generator, is analytic. Moreover, 
\begin{align*}
\Delta^{-iz}x\Omega
&=\left(\Delta|_{[01]}^{-iz}+\Delta|_{(1,\infty)}^{-iz-1/2}\Delta|_{(1,\infty)}^{1/2}\right)x\Omega\\
&=\Delta|_{[01]}^{-iz}x\Omega + \Delta|_{(1,\infty)}^{-iz-1/2}\Delta^{1/2}x\Omega,
\end{align*}
where $\Delta|_{E}=\Delta P_\Delta(E)$, with $P_\Delta(E)$ the spectral projection of $\Delta$ relative to the measurable set $E$. Now, since $\Delta|_{[01]}\leq I$, 
$\Delta|_{[01]}^{w}$ is analytic for $\Re w>0$, namely 
$\Delta|_{[01]}^{-iz}$ is analytic for $\Im z>0$. Analogously, since $\Delta|_{(1,\infty)}\geq I$, 
$\Delta|_{(1,\infty)}^w$ is analytic for $\Re w<0$, namely $\Delta|_{(1,\infty)}^{-iz-1/2}$ is analytic for $\Im z<1/2$. The analyticity of $\Delta^{-iz}x\Omega$ in the open strip follows, since $x\Omega\in\cd(\Delta^{1/2})$. The same argument shows that in the closed strip $\|\Delta|_{[01]}^{-iz}\|\leq1$ and $\|\Delta|_{(1,\infty)}^{-iz-1/2}\|\leq1$, hence $\|\Delta^{-iz}x\Omega\|\leq\|x\Omega\|+\|\Delta^{1/2}x\Omega\|$. In an analogous way we get analyticity and boundedness for $\Delta^{-i\overline{z}}x'\Omega$.

Since $V$ has the same properties as $U$, we get that $f_V(z)$ is continuous and bounded in the closed strip $0\leq\Im z\leq\frac12$, and analytic in the open strip $0<\Im z<\frac12$.

We now show that $f_U(t+\frac{i}2)=f_V(t)$, $\forall t\in\R, a\geq0$. Indeed
\begin{align*}
f_U(t+\frac{i}2)
&=(\Delta^{-\frac12}\Delta^{-it}x'\Omega,U(e^{2\pi t}e^{i\pi }a)\Delta^{-it}\Delta^{\frac12}x\Omega)\\
&=(\Delta^{-\frac12}\Delta^{-it}x'\Omega,U(-e^{2\pi t}a)\Delta^{-it}Jx^*\Omega)\\
&=(\Delta^{-\frac12}\Delta^{-it}x'\Omega,JV(e^{2\pi t}a)\Delta^{-it}x^*\Delta^{it}V(e^{2\pi t}a)^*\Omega)\\
&=(\Delta^{-it}x'\Omega,SV(e^{2\pi t}a)\Delta^{-it}x^*\Delta^{it}V(e^{2\pi t}a)^*\Omega)\\
&=(\Delta^{-it}x'\Omega,V(e^{2\pi t}a)\Delta^{-it}x\Omega)=f_V(t),
\end{align*}
where in the third (and fifth) equation we used the invariance of $\Omega$ under  $V(a)$ and $\Delta^{it}$, and in the fifth one we used the fact that $V(e^{2\pi z}a)\Delta^{-it}x^*\Delta^{it}V(e^{2\pi t}a)^*\in\car$, since $\Delta^{it}$ implements automorphisms of $\car$ for $t\in\R$, and $V(a)$ implements endomorphisms of $\car$ for $a\geq0$.

In the same way, $f_V(t+\frac{i}2)=f_U(t)$, since the map $U(a)\to JU(-a)J$ is an involution. As a consequence, gluing copies of the functions $f_U$ and $f_V$ we get a continuous function $f$ on the complex plane which is periodic of period $i$, satisfies $f(z)=f_U(z-i\frac{m}2)$ on any closed strip $\frac{m}2\leq\Im z\leq \frac{m+1}2$ for $m$ even and  satisfies $f(z)=f_V(z-i\frac{m}2)$ on any closed strip $\frac{m}2\leq\Im z\leq \frac{m+1}2$ for $m$ odd. As a consequence, by the edge of the wedge theorem (cf. e.g. \cite{SW}), $f$ is analytic on $\C$, and, being bounded, is indeed constant by Liouville theorem. This entails
\begin{align*}
&(\Delta^{-it}x'\Omega,U(e^{2\pi t}a)\Delta^{-it}x\Omega)=f(t)=f(0)=(x'\Omega,U(a)x\Omega),\\
&(x'\Omega,U(a)x\Omega)=f(0)=f(i/2)=f_V(0)=(x'\Omega,JU(-a)Jx\Omega).
\end{align*}
Since $\Omega$ is cyclic and separating for $\car$, we get
\begin{equation}\label{halfrel}
\begin{aligned}
U(a)&=\Delta^{it}U(e^{2\pi t}a)\Delta^{-it},\\
U(a)&=JU(-a)J
\end{aligned}
\quad a\geq0, \ t\in\R.
\end{equation}
The same relations should hold for $V$, therefore we get (\ref{halfrel}) for $a\leq0$.
\end{proof}

The previous theorem has a direct corollary for two-dimensional quantum field theories in the algebraic setting, which motivated Borchers theorem.

\begin{corollary}[\cite{Borch92}]
Assume we are given a net $\co\to\car(\co)$ of von~Neumann algebras, where $\co$ is a double cone in the two-dimensional Minkowski space $M^2$, acting in the vacuum representation, so that axioms
$(1), (2), (4), (5)$ are satisfied, with axiom $(3')$ replaced by
\item[$(3'')$] There exists a strongly continuous, positive energy, unitary representation $U$ of the translation group such that
 $U(\tau_x)\pi_0(\car(\co))U(\tau_x)^*=\pi_0(\car(\co+x))$, where $\tau_x$ denotes the translation by the vector $x$.

Then the representation $U$ extends to a representation of the proper Poincar\'e group in such a way that $(3')$ is satisfied for the dual net $\car^d(\co)$ and Bisognano-Wichmann relations hold. In particular essential duality holds for the given net.
\end{corollary}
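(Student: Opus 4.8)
The plan is to read the boosts and the wedge reflections off the modular operators of the wedge algebras, check that together with the given translations they close up to a representation of $\cp_+$ of $M^2$, and then push the resulting geometric information onto the dual net.

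First I would set up the wedge algebras. In two dimensions every wedge is a translate of the right wedge $W$ or of $W'=-W$, and, since $W$ is unbounded, every double cone has a translate lying inside $W$. Using the last fact together with the spectrum condition $(4)$ and the Reeh--Schlieder analyticity argument (if $\psi\perp\car(W)\Omega$, then for $A$ localized in a double cone that can be translated into $W$ the function $x\mapsto(\psi,U(x)A\Omega)$ is the boundary value of a function analytic in a tube over the forward cone and vanishing on an open set, hence vanishes identically, so $\psi\perp\car(\co)\Omega$ for all double cones $\co$ and $\psi=0$ by $(5)$), I would conclude that $\Omega$ is cyclic, and by locality $(2)$ separating, for every wedge algebra $\car(W_1)$. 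Thus each wedge algebra is standard for $\Omega$, with modular operators $\Delta_{W_1},J_{W_1}$.

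Next I would pass to light-cone coordinates and write the translation group as the product of its two null one-parameter subgroups, both with nonnegative generator by $(4)$; call $U_1$ the one that translates $W$ into itself for nonnegative parameter and has nonnegative generator, and $U_2$ the other. Borchers' Theorem~\ref{Borch92} applied to $(\car(W),\Omega,U_1)$ gives $\Delta_W^{it}U_1(s)\Delta_W^{-it}=U_1(e^{-2\pi t}s)$ and $J_WU_1(s)J_W=U_1(-s)$. The subgroup $U_2$ moves $\car(W)$ the wrong way but moves $\car(W)'$ into itself for nonnegative parameter; applying Theorem~\ref{Borch92} to $(\car(W)',\Omega,U_2)$ — whose modular group is $\mathrm{Ad}\,\Delta_W^{-it}$ — and rewriting yields $\Delta_W^{it}U_2(s)\Delta_W^{-it}=U_2(e^{2\pi t}s)$ and $J_WU_2(s)J_W=U_2(-s)$. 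These are precisely the relations expressing that, upon setting $U(\Lambda_W(t)):=\Delta_W^{it}$ and $U(r_W):=J_W$ (with $\Lambda_W$ the boost fixing $W$, acting as $e^{\mp2\pi t}$ on the two null lines, and $r_W$ the wedge reflection, which in $M^2$ is $x\mapsto-x$), the operators $\{U(\tau_x)\}\cup\{U(\Lambda_W(t))\}\cup\{U(r_W)\}$ satisfy the defining relations of $\cp_+$ of $M^2$; the remaining relations $U(r_W)U(\Lambda_W(t))U(r_W)=U(\Lambda_W(t))$ and $U(r_W)^2=1$ are the modular identities $J_W\Delta_W^{it}J_W=\Delta_W^{it}$ and $J_W^2=1$. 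Hence $U$ extends to an (anti)unitary, positive-energy representation of $\cp_+$, unitary on $\cp_+^\uparrow$, and by construction $\Delta_W^{it}=U(\Lambda_W(t))$, $J_W=U(r_W)$; conjugating by translations gives the Bisognano--Wichmann relations for every wedge.

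The last step — Poincar\'e covariance of the dual net and essential duality — is the one I expect to be the main obstacle. One thing is free: $U$ acts geometrically on the right-wedge algebras of the given net, since $\car(W+a)=U(\tau_a)\car(W)U(\tau_a)^*$ and $\cp_+^\uparrow$ is generated by translations and by $\{\Lambda_W(t)\}$, which fixes $\car(W)$ by modular invariance, so $U(g)\car(W+a)U(g)^*=\car(g(W+a))$ for $g\in\cp_+^\uparrow$. Writing a double cone as $\co=W_1\cap W_2$ with $W_1$ a right, $W_2$ a left wedge, so that $\co'$ is the union of the complementary wedges and $\car^d(\co)=\car(W_1')'\cap\car(W_2')'$, I would then try to run the Takesaki conditional-expectation argument already used for Corollary~\ref{essdual}: the wedge algebras are globally invariant under the (geometrically acting) modular groups, so an inclusion such as $\car(W')\subseteq\car(W)'$ is invariant under the modular group of $\car(W)'$ and therefore, $\Omega$ being cyclic for $\car(W')$, admits a conditional expectation that is the identity, yielding wedge duality $\car(W)'=\car(W')$; granting this, essential duality and the covariance of $\car^d$ follow from the two-dimensional facts that a double cone is an intersection of two wedges and that two spacelike double cones are separated by a wedge. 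The delicate point is exactly the input to Takesaki's theorem: the geometric action of the modular groups is available for free only on the right-wedge algebras, and it must be upgraded to the left-wedge algebras (equivalently, the missing wedge duality must be proved), which is where the full two-dimensional geometry — and whatever additivity is implicit in the Reeh--Schlieder step of this setting — has to be used.
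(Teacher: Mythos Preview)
The paper does not supply a proof of this corollary; it is simply stated with a citation to Borchers' original article, so there is no in-paper argument to compare against. Your outline is nonetheless the standard route: Reeh--Schlieder gives $\Omega$ standard for each wedge algebra; splitting the translations into the two light-like one-parameter subgroups and applying Theorem~\ref{Borch92} to $(\car(W),\Omega,U_1)$ and to $(\car(W)',\Omega,U_2)$ yields the correct rescaling and reflection relations of $\Delta_W^{it}$ and $J_W$ with both null translations; these are precisely a presentation of the two-dimensional proper Poincar\'e group, so the extension of $U$ together with the Bisognano--Wichmann identities for $W$ (and, by translation covariance, for every wedge) follows.

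You have also correctly isolated the one genuine difficulty. Theorem~\ref{Borch92} controls how $\Delta_W^{it}$ and $J_W$ act on the translations, hence on right-wedge algebras $\car(W+a)$ and on their commutants, but not directly on $\car(W')$ or on double-cone algebras; wedge duality is therefore not automatic, and your Takesaki argument needs as input precisely the invariance $\Delta_W^{it}\car(W')\Delta_W^{-it}=\car(W')$ that is not yet available. The way to close this is to run Theorem~\ref{Borch92} once more, now for $(\car(W'),\Omega)$ with the null translation compressing $W'$: this forces $\Delta_{\car(W')}^{it}$ to obey the same commutation relations with the translation group as $\Delta_W^{-it}$. Combined with the uniqueness of the vacuum in axiom~$(5)$ and the (weak) additivity already implicit in your Reeh--Schlieder step, this is what lets one feed the Takesaki conditional-expectation argument for $\car(W')\subset\car(W)'$ and obtain wedge duality; essential duality and the Poincar\'e covariance of $\car^d$ then follow from the two-dimensional geometry exactly as in your last paragraph and as in Corollary~\ref{essdual}. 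So your plan is right; the missing ingredient is this second invocation of Borchers' theorem on the left-wedge algebra.
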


\begin{remark} In the previous Corollary Poincar\'e covariance is not assumed, indeed the Lorentz boosts are constructed via modular groups. More precisely what is proved is a geometric action of the modular groups and reflections (see next section). This is stronger than the Bisognano-Wichmann result, however rises the question of the uniqueness of the implementation of the Poincar\'e symmetry. It may happen that the theory was endowed with a Poincar\'e action which does not coincide with the one recovered by modular theory. A uniqueness result is contained in \cite{BGL1},
a comprehensive review on these questions is given in \cite{BoYn}.
\end{remark}

\begin{remark}
It is possible to reverse the statement of the previous Corollary, namely reconstruct the net of local algebras starting with the vacuum vector $\Omega$, the right wedge algebra $\cam$ and a positive energy representation $U$ of the translation group on  the two-dimensional Minkowski space $M^2$. The standard hypotheses are required, namely $\Omega$ should be cyclic and separating for $\cam$ and invariant under $U$, and $U(x)$ has to implement endomorphisms of $\cam$ when $x$ is space-like and pointing to the right. In this way one reconstructs the algebras $\car(W)$ for all wedges together with a representation of the Poincar\'e group in two dimensions (cf. the reconstrution results for conformal theories on the circle in Theorem \ref{confreconstruction}). Then one may define the double cone algebras via intersection (cf. eq. (\ref{convex}) below). However, the non triviality of the double cone algebras is not guaranteed in general. This problem has been solved under the further assumption of modular nuclearity, bringing to the construction of interacting theories (see \cite{Lechner} and references therein).
\end{remark}

The results by Hislop-Longo and the theorem of Borchers have been used to get the following.
\begin{theorem}[\cite{BGL1},\cite{GaFr}]\label{confBW}
Let $\co\to\car(\co)$ a conformally covariant net of von~Neumann algebras acting on a Hilbert space. Then it satisfies essential duality, and  the modular conjugations and groups of the von~Neumann algebras associated with wedges, double cones, and forward and backward cones have a geometric meaning. In particular, for any such region $\cc$, the relations in $(\ref{confBWrel})$ are satisfied.\end{theorem}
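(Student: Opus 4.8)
The plan is to reduce the whole statement to a single wedge, and then to combine Borchers' theorem (Theorem~\ref{Borch92}) with the extra rigidity that conformal invariance provides. \emph{Step 1: geometric reduction.} By the elementary geometric facts recalled in the proof of the Hislop--Longo theorem, every double cone and every forward or backward light cone $\cc$ is of the form $\cc=gW_R$ for a suitable $g$ in the identity component of the conformal group, and the associated reflection is $r_\cc=gr_{W_R}g^{-1}$. Since the vacuum is conformally invariant, $U(g)\Omega=\Omega$, and conformal covariance gives $\car(\cc)=U(g)\car(W_R)U(g)^\ast$; by uniqueness of the modular data this forces $\Delta_\cc^{it}=U(g)\Delta_{W_R}^{it}U(g)^\ast$ and $J_\cc=U(g)J_{W_R}U(g)^\ast$ (here $U$ on the orientation/time reversing conformal maps is understood to be defined through the modular conjugations themselves, as in Theorem~\ref{thm:BW}). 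Hence, once $\Delta_{W_R}^{it}=U(\Lambda_{W_R}(t))$ and $J_{W_R}=U(r_{W_R})$ are established, one obtains $\Delta_\cc^{it}=U(g\Lambda_{W_R}(t)g^{-1})=U(\Lambda_\cc(t))$ and $J_\cc=U(r_\cc)$, that is $(\ref{confBWrel})$. Moreover, once $J_{W_R}$ is known to be the geometric reflection exchanging $W_R$ and $W_R'$, Tomita--Takesaki ($J_{W_R}\car(W_R)J_{W_R}=\car(W_R)'$) together with covariance gives wedge duality $\car(W_R)'=\car(W_R')$, and then essential duality for double cones follows by the computation in the proof of Corollary~\ref{essdual}. (Throughout one uses that $\Omega$ is cyclic and separating for wedge algebras, which is Reeh--Schlieder once additivity is assumed.)

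\emph{Step 2: the wedge, via Borchers.} It remains to treat $W:=W_R$. One checks that the light-like translation $T_+(s)\colon x\mapsto x+s(1,1,0,0)$ maps $W$ into $W$ for $s\geq 0$, so that $U(T_+(s))\car(W)U(T_+(s))^\ast\subseteq\car(W)$ for such $s$ by Poincar\'e covariance, that $U(T_+(s))\Omega=\Omega$, and that the generator $P_0-P_1$ of $U(T_+(\cdot))$ is positive by the spectrum condition. Theorem~\ref{Borch92} then applies to $(\car(W),\Omega)$ and $U(T_+(\cdot))$ and yields
\[
\Delta_W^{it}U(T_+(s))\Delta_W^{-it}=U(T_+(e^{-2\pi t}s)),\qquad J_WU(T_+(s))J_W=U(T_+(-s)).
\]
Since the boost $\Lambda_W(t)$ scales the null direction $(1,1,0,0)$ by $e^{-2\pi t}$ and $r_W$ reverses it, the geometric transformations obey the same relations; thus $V(t):=\Delta_W^{it}U(\Lambda_W(-t))$ and $V_0:=J_WU(r_W)$ commute with every $U(T_+(s))$. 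Applying Borchers once more, this time to $(\car(W)',\Omega)$ --- whose modular operator is $\Delta_W^{-1}$ --- and to the light-like translation $T'_+(s)\colon x\mapsto x+s(1,-1,0,0)$ (one checks $W\subseteq T'_+(s)W$ for $s\geq0$, so that $U(T'_+(s))$ implements endomorphisms of $\car(W)'$ for $s\geq0$, again with positive generator $P_0+P_1$), one finds that $V(t)$ commutes with every $U(T'_+(s))$ as well.

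\emph{Step 3: closing the gap --- the main obstacle.} The genuine difficulty is to pass from ``$\Delta_W^{it}$ obeys the boost's commutation relations with the light-like translations'' to the identity $\Delta_W^{it}=U(\Lambda_W(t))$. For the Poincar\'e group alone this step fails in general --- which is exactly why Bisognano--Wichmann is a theorem rather than an axiom --- and one must exploit the full conformal group. The boost $\Lambda_W$ together with the light-like translation $T_+$ and the light-like special conformal transformation $S_+$ attached to the null ray through the origin on $\partial W$ generate a subgroup locally isomorphic to $SL(2,\R)$ leaving $W$ invariant; moreover $S_+$, being conjugate to $T_+$ by the relativistic ray inversion (which is unitarily implemented in a conformal theory), has a positive generator too and maps $W$ into $W$ for the appropriate sign of its parameter, so Borchers applies to it as well and controls the action of $\Delta_W^{it}$ on $U(S_+(\cdot))$. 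One is thereby reduced to a rigidity statement for positive-energy representations of $\widetilde{SL(2,\R)}$ with a fixed vector: the one-parameter unitary group $\Delta_W^{it}$, which fixes $\Omega$ and scales the two parabolic subgroups $U(T_+(\cdot))$ and $U(S_+(\cdot))$ by $e^{\mp2\pi t}$, must be the dilation (Cartan) subgroup $U(\Lambda_W(t))$, positivity of the energy ruling out the a priori possible correction by a commuting factor; equivalently, one transports $W$ to the forward light cone $C_+$, invariant under dilations, and argues directly with the dilation group. The relation $J_W=U(r_W)$ then follows from the $J$-part of Borchers together with this identification, $J_W$ being the unique antiunitary fixing $\Omega$, commuting with $\Delta_W^{it}$, and carrying $\car(W)$ onto $\car(W)'$. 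The careful implementation --- in particular the classification input for positive-energy $\widetilde{SL(2,\R)}$-representations and the passage to the universal cover of compactified Minkowski space on which the relevant conformal maps are everywhere defined --- is carried out in \cite{BGL1} and \cite{GaFr}.
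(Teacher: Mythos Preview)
The paper does not give a proof of this theorem; the sentence preceding the statement merely records that the result is obtained by combining the Hislop--Longo argument with Borchers' theorem, and defers to \cite{BGL1,GaFr} for the details. Your sketch is a faithful elaboration of exactly that hint and of the strategy actually used in those references: Step~1 is the Hislop--Longo geometric reduction (wedges, double cones and light cones all lie on a single orbit of the conformal group, so it suffices to treat one wedge), Step~2 is Borchers' theorem applied to the two light-like translation semigroups attached to $W_R$, and in Step~3 you correctly isolate the genuine obstruction --- Borchers' commutation relations alone do not force $\Delta_W^{it}=U(\Lambda_W(t))$ --- and correctly name the extra conformal input (the special conformal transformations completing an $SL(2,\R)$, equivalently the transport to the forward cone with its dilation group) that removes it. Your acknowledgement that the rigidity step for positive-energy representations of $\widetilde{SL(2,\R)}$ and the passage to the compactified spacetime require the careful analysis of \cite{BGL1,GaFr} is honest and appropriate; those are precisely the places where the argument ceases to be a formal manipulation.
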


The previous result is the first 4-dimensional example of a complete proof of the Bisognano-Wichmann relations in the algebraic setting, though under the conformal symmetry assumption. 
A proof for massive Poincar\'e covariant theories is due to Mund \cite{Mund}.

\subsection{Physical interpretations: Hawking radiation \& Unruh effect}

In the early seventies, Haag, Hugenholtz and Winnink \cite{HHK} showed that the analyticity condition enjoyed by the modular group was indeed equivalent to the so-called KMS condition in quantum thermodynamics, characterizing equilibrium states  for a given time evolution. According to this interpretation, the Bisognano-Wichmann relations for wedge-like regions mean that the vacuum state is a thermal equilibrium state for the time evolution given by the Lorentz boosts. Indeed, as observed by Sewell \cite{Sewell}, an observer whose time-evolution is given by Lorentz boosts is a uniformly accelerated observer, and, by the Einstein equivalence principle, behaves like a free-falling observer in a gravitational field. The wedge region, as a space-time in itself, is known as the Rindler wedge, and is one of the space-times describing a black hole, the wedge boundary describing the event horizon. A fundamental result of Hawking \cite{Haw} showed that free falling observers in a black hole feel a temperature, the so called Hawking temperature. A heuristic explanation is the following: spontaneous
particle pairs creation happens on the event horizon, negative energy
particles may tunnel into the inaccessible region, the others
contribute to the thermal radiation. This explains why the vacuum becomes a thermal state for an accelerated observer. The general fact that a vacuum state may become a temperature state because of acceleration is generally called Unruh effect \cite{U}. Furthermore, the width of the analyticity strip associated with the KMS state is interpreted as the inverse temperature, hence, according to the re-parametrization of the boosts, for a uniformly accelerated observer with acceleration $a$, the vacuum has temperature $\frac{a}{2\pi}$.

\medskip

A similar motivation explains Bisognano-Wichmann relations for the de Sitter space-time. Gibbons and Hawking  \cite{GH} have shown that a spacetime $\cam$ with repulsive (i.e. positive)
cosmological constant has certain similarities with a black hole
spacetime.  $\cam$ is expanding so rapidly that, if $\gamma$ is a
freely falling observer in $\cam$, there are regions of $\cam$ that
are inaccessible to $\gamma$, even if he waits indefinitely long; in
other words the past of the world line of $\gamma$ is a proper
subregion $\cn$ of $\cam$.  The boundary $\mathfrak H$ of $\cn$
is a cosmological event horizon for $\gamma$.  
As in the black hole case, one argues that $\gamma$ detects a temperature related to the
surface gravity of $\mathfrak H$.  A heuristic explanation can be given as above, the event horizon being replaced by the cosmological horizon.

\section{Modular covariance and modular localization.}\label{GMA&ML}

Once the geometric meaning of the modular objects in quantum field theory has been established, one may try to reverse the procedure, namely to start with modular objects and reconstruct some aspects of a quantum field theory.

\subsection{Modular covariance}

In the spirit of Theorem \ref{Borch92} of Borchers and in fact based on it, one may ask if the modular conjugations or one-parameter groups associated to wedge regions generate a representation of the Poincar\'e group which acts geometrically on the net. So, instead of assuming Poincar\'e symmetry one may try to recover it by modular objects. A hypothesis which is sufficient for that is the request that the adjoint action of the modular groups maps local algebras to local algebras in some prescribed way. These assumptions have been called modular covariance or geometrical modular action. The first result in this direction is the following \cite{BGL2,GuLo2}

\begin{theorem}
Let $\co\to\car({\co})$ a net of von~Neumann algebras acting on a Hilbert space $\ch$ and satisfying the following properties:
\begin{itemize}
\item (isotony).\quad
$\co_1\subset\co_2\Rightarrow \car(\co_1)\subset\car(\co_2)$;
\item (locality).\quad
$\co_1\subset\co_2'\Rightarrow \car(\co_1)\subset\car(\co_2)'$;
\item (Reeh-Schlieder property).\quad
There exists a vector $\Omega$ which is ciclic for the algebras $\car(\co)$ associated with double cones.
\item (modular covariance) For any wedge $W$ and any double cone $\co$, we have
$$
\Delta_W^{it}\car(\co)\Delta_W^{-it}=\car(\Lambda_W(t)\co),\quad t\in\R.
$$
\end{itemize}
Then there exists a unique positive-energy anti-unitary representation $U$ of the proper Poincar\'e group such that $U(g)\car(\co)U(g)^*=\car(g\co)$, and the Bisognano-Wichmann relations hold. In particular the net satisfies essential duality.

If we assume essential duality, the thesis still holds if we replace modular covariance with the less restrictive assumption
$$
\alpha^W_t(\car(\co))=\car(\Lambda_W(t)\co),\quad t\in\R, \co\subset W,
$$
where $\alpha^W_t$ denotes the modular automorphism group associated with $(\car(W),\Omega)$.
\end{theorem}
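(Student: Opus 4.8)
The plan is to \emph{define} the sought representation directly from the modular data: for a wedge $W$ put $U(\Lambda_W(t)):=\Delta_W^{it}$ on the boost subgroup and $U(r_W):=J_W$ on the reflection, where $\Delta_W$, $J_W$ are the modular objects of $(\car(W),\Omega)$ with $\car(W):=\bigvee_{\co\subset W}\car(\co)$. This is legitimate because $\Omega$ is standard for every wedge algebra: it is cyclic since $\car(W)\supseteq\car(\co)$ for a double cone $\co\subset W$, and separating since locality gives $\car(W')\subseteq\car(W)'$ while $\Omega$ is cyclic for $\car(W')$. By transitivity of $\cp^\uparrow_+$ on $\cw$ it suffices to control the pairs $(W_R,gW_R)$. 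Now $\cp_+$ is generated by the wedge boosts and the reflections $r_W$: the boosts already generate $\cl^\uparrow_+$, and a lightlike translation along a null $\ell\subset\partial W$ arises from products of wedge boosts, e.g. $\tau_{(1-e^{-2\pi t})\ell}=\Lambda_{W+\ell}(t)\Lambda_W(-t)$ (the boost of $W$ scaling $\ell$ by $e^{-2\pi t}$). So two things must be done: (i) exhibit genuine unitary groups realising these limiting translations, and (ii) check that $\Lambda_W(t)\mapsto\Delta_W^{it}$, $r_W\mapsto J_W$ respects all relations of $\cp_+$.

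For (i) I would use half-sided modular inclusions. With $W=W_R$ and $\ell$ a future-pointing null vector on $\partial W$, the inclusion $\car(W+\ell):=\bigvee_{\co\subset W+\ell}\car(\co)\subseteq\car(W)$ has common cyclic and separating vector $\Omega$, and modular covariance gives $\Delta_W^{it}\car(W+\ell)\Delta_W^{-it}=\car(W+e^{-2\pi t}\ell)$, which is contained in $\car(W+\ell)$ for $t\le0$ since $s\mapsto W+s\ell$ is decreasing; thus it is a half-sided modular inclusion. By the Borchers--Wiesbrock correspondence (Wiesbrock; Araki--Zsid\'o) there is a one-parameter unitary group $T_\ell$, with semibounded generator, fixing $\Omega$, implementing the geometric translations by $a\ell$ on the net, and obeying $\Delta_W^{it}T_\ell(a)\Delta_W^{-it}=T_\ell(e^{-2\pi t}a)$ and $J_WT_\ell(a)J_W=T_\ell(-a)$. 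Letting $W$ run over the spatially rotated copies of $W_R$ and $\ell$ over the null directions on their horizons, these $T_\ell$'s span the translations; one checks they commute (suitably translated wedges are spacelike separated, so locality plus a cyclicity argument applies), assemble into a representation $T$ of $\R^4$, and have joint spectrum in $\overline{V}_+$, since each future-null generator has a definite sign and the positive null rays convexly span the forward cone --- this yields the spectrum condition.

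Step (ii) is the core. Modular covariance already tells us that $\Delta_W^{it}$ and $T_\ell(a)$ implement on the global algebra $\cam:=\bigvee_\co\car(\co)$ the \emph{same} geometric automorphism as the corresponding element of $\cp_+$, and the same will be shown for $J_W$; hence any word in these unitaries equal to the identity in $\cp_+$ is implemented by a unitary lying in $\cam'$ and fixing $\Omega$. The main obstacle is to show this residual unitary is always $1$ --- equivalently, to trivialise the a priori $\cam'$-valued $2$-cocycle measuring the non-multiplicativity of $g\mapsto U(g)$; note that irreducibility is \emph{not} assumed, so $\cam'=\C$ is unavailable, and this is exactly the cohomological point. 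The way through is that the relations of $\cp^\uparrow_+$ reduce to relations between pairs of wedge boosts (together with the $\cp_+$-relations involving the $r_W$), and each such relation is rigid: the commutation relations tying $\Delta_{W_1}^{it}$, $\Delta_{W_2}^{is}$, $J_{W_i}$ and the $T_\ell$'s together are precisely the conclusions of Borchers' Theorem~\ref{Borch92} applied to the \emph{canonically} constructed translation groups $T_\ell$ and the boosts $\Delta_W^{it}$, and these hold as operator identities, not merely modulo $\cam'$. In particular one obtains that $J_W$ implements $r_W$ on the whole net, and uniqueness of $U$ is immediate since its values on the generating set are forced to be $\Delta_W^{it}$ and $J_W$.

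It remains to collect the consequences. The Bisognano--Wichmann relations $\Delta_W^{it}=U(\Lambda_W(t))$, $J_W=U(r_W)$ hold by construction; wedge duality $\car(W)'=J_W\car(W)J_W=\car(r_WW)=\car(W')$ follows from Tomita--Takesaki and the geometric action of $J_W$, and essential duality is then deduced exactly as in Corollary~\ref{essdual}. Finally, if essential duality is assumed a priori one needs modular covariance only in the form $\alpha^W_t(\car(\co))=\car(\Lambda_W(t)\co)$ for $\co\subset W$: since $\car(W')$ is globally invariant under $\alpha^W_t$, the action of $\Delta_W^{it}$ on a double cone $\co\not\subset W$ is reconstructed from its action on the double cones contained in $W'$ via the intersection formula $\car^d(\co)=\bigwedge_{W_0\supset\co}\car(W_0)$ together with the description of double cone algebras as intersections of wedge algebras, so the entire argument goes through under the weaker hypothesis.
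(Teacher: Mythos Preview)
The paper does not actually prove this theorem; it is stated with a citation to \cite{BGL2,GuLo2} and no argument is supplied. So there is no ``paper's own proof'' to compare against directly. That said, your outline is recognisably the strategy of those references, and steps~(i) and the final paragraph are essentially right. The real issue is step~(ii), where the argument you give does not close the gap you correctly identify.

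Your claim that ``each such relation is rigid: the commutation relations\dots\ are precisely the conclusions of Borchers' Theorem~\ref{Borch92}\dots\ and these hold as operator identities, not merely modulo~$\cam'$'' is not correct. Borchers' theorem controls the interaction between the modular objects of a \emph{single} algebra and a one-parameter translation group with positive generator; it says nothing about how $\Delta_{W_1}^{it}$ and $\Delta_{W_2}^{is}$ relate for two distinct wedges whose boosts are not related by a translation. The nontrivial relations of $\cl^\uparrow_+$ --- for instance that a suitable product of boosts in different directions is a rotation of a prescribed angle --- are exactly the ones that are \emph{not} consequences of Theorem~\ref{Borch92}, and it is these you must verify as operator identities. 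Moreover, a word $V$ in the $\Delta_W^{it}$'s corresponding to the identity in $\cp_+$ only satisfies $V\car(\co)V^{*}=\car(\co)$ \emph{setwise}; you have not shown $V\in\cam'$, so the cyclicity-of-$\Omega$ argument you allude to does not apply directly.

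The argument in \cite{BGL2} proceeds differently at this point. One observes that such a $V$ fixes $\Omega$ and preserves every $\car(W')$ setwise, hence by uniqueness of modular objects $V\Delta_{W'}V^{-1}=\Delta_{W'}$ for all wedges $W'$; therefore $V$ is \emph{central} in the group generated by the modular unitaries. This exhibits that group as a central extension of (the group generated by the boosts in) $\cp^\uparrow_+$, and the remaining work --- which is genuinely cohomological and is the content of \cite{BGL2} --- is to show this extension is the expected one (a priori one only obtains a representation of the universal cover; that it descends uses locality, cf.\ \cite{GuLo2}). Your sketch skips this entirely. A secondary gap: you assert ``the same will be shown for $J_W$'' but never indicate how the geometric action of $J_W$ on double cones not contained in $W\cup W'$ is obtained; modular covariance is a hypothesis on $\Delta_W^{it}$ only, and Tomita--Takesaki gives just $J_W\car(W)J_W=\car(W)'$.
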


Many results of this type have been given later, among which we quote \cite{Bo1,Bo2,BFS,BDFS,BuSu2,BuSu3}. In particular, Buchholz, Dreyer, Florig and Summers proved a quite general result, which we state here in the classical case of wedge algebras of the Minkowski space.

A map $W\to\car(W)$ from wedges to von~Neumann algebras satisfies the Condition of Geometric Modular Action (CGMA) if $(i)$ preserves inclusion; $(ii)$ if ${W_1}\cap{W_2}\ne\emptyset$, the vacuum is cyclic for $\car(W_1)\cap\car(W_2)$; $(iii)$ if the vacuum is cyclic for $\car(W_1)\cap\car(W_2)$, then $\overline{W_1}\cap\overline{W_2}\ne\emptyset$; $(iv)$ for any wedge $W_0$, $J_{W_0}$ implements an automorphism of the family $\{\car(W)\}$.

The map $W\to\car(W)$ satisfies the Condition of Geometric Action for the Modular Group (CMG) if $(i)$, $(ii)$, $(iii)$ above are satisfied, and $(v)$ for any wedge $W_0$, $t\in\R$, $\Delta^{it}_{W_0}$ implements an automorphism of the family $\{\car(W)\}$.

The map $W\to\car(W)$ satisfies the Modular Stability Condition if the modular unitaries of any wedge are contained in the group generated by the modular conjugations of all wedges.

\begin{theorem}[\cite{BDFS,BuSu2}]
Assume we have a map $W\to\car(W)$ from wedge regions to von Neumann algebras acting on a given Hilbert space such that CGMA is satisfied, and the group generated by the $J_W$'s acts transitively on the wedge algebras. Then there is a (anti)-unitary representation of the proper Poincar\'e group such that $J_W=U(r_W)$ and $U(g)\car(W)U(g)^*=\car(gW)$. In particular wedge duality holds.

If we assume modular stability then 
the spectrum condition holds (up to a sign).

If we assume CMG with transitive action we get a strongly continuous unitary representation of the  covering group of the proper orthochronous Poincar\'e group such that $U(g)\car(W)U(g)^*=\car(gW)$ and $\Delta_W^{it}=U(\Lambda_W(t))$. 

If locality is further assumed, the representation extends to the proper Poincar\'e group, $J_W=U(r_w)$ and the spectrum condition holds. 
\end{theorem}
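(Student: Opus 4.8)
The plan is to push the problem down to the index set $\cw$ of wedges, exploit the rigidity of $\cw$ as a geometrically ordered structure, and lift back to $\ch$. First one shows $W\mapsto\car(W)$ is injective: by CGMA $(ii)$--$(iii)$ the cyclicity of $\Omega$ for $\car(W_1)\cap\car(W_2)$ detects (up to boundary cases) whether $\overline{W_1}\cap\overline{W_2}\ne\emptyset$, and a wedge is recovered from the family of wedges causally disjoint from it, so $\car(W_1)=\car(W_2)$ forces $W_1=W_2$. By CGMA $(iv)$ each modular conjugation $J_{W_0}$ then induces a well defined bijection $\beta_{W_0}$ of $\cw$ via $J_{W_0}\car(W)J_{W_0}=\car(\beta_{W_0}(W))$, as does every element of $\cj:=\langle J_W:W\in\cw\rangle$; since these operators fix $\Omega$ they preserve cyclicity of intersections, hence the causal-disjointness relation, so $\beta_{W_0}\in\mathrm{Aut}(\cw)$.

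Next one uses the rigidity of the wedge space: $\mathrm{Aut}(\cw)$ is the proper Poincar\'e group $\cp_+$, and the wedge reflections $\{r_W\}$ are \emph{the unique} family of nontrivial involutive automorphisms of $\cw$ obeying the covariance relations $r_{W_0}r_{W_1}r_{W_0}=r_{r_{W_0}W_1}$ and generating a transitive group. The family $\{\beta_{W_0}\}$ shares these features: each $\beta_{W_0}$ is a nontrivial involution (since $J_{W_0}^2=1$ and $\car(W_0)'\ne\car(W_0)$); the covariance relation holds because $J_{W_0}J_{W_1}J_{W_0}$ is, by uniqueness of the modular conjugation, the modular conjugation of $(\car(\beta_{W_0}(W_1)),\Omega)$, i.e. $J_{\beta_{W_0}(W_1)}$; and $\langle\beta_{W_0}\rangle$ is transitive by the hypothesis on $\cj$. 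Hence $\beta_{W_0}=r_{W_0}$, so $J_{W_0}\car(W)J_{W_0}=\car(r_{W_0}W)$ for all $W$; taking $W=W_0$ gives $\car(W_0)'=\car(W_0')$, i.e. wedge duality.

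Consequently $r_W\mapsto J_W$ respects a reflection presentation of $\cp_+$: $J_W^2=1$ and $J_{W_0}J_{W_1}J_{W_0}=J_{r_{W_0}W_1}$ are immediate, and the remaining relations hold once one knows that the geometric homomorphism $p:\cj\to\cp_+$ ($J_W\mapsto r_W$) is injective --- any element of $\ker p$ commutes with every $J_W$, hence lies in the centre of $\cj$, and is shown to be a scalar. Its inverse is the desired (anti)unitary representation $U$ of $\cp_+$ on $\ch$ with $U(r_W)=J_W$; being a product of $J_W$'s, $U(g)$ is unitary for $g\in\cp^\uparrow_+$ and antiunitary otherwise, and $U(g)\car(W)U(g)^*=\car(gW)$.

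For the remaining assertions: under Modular Stability the boosts $\Delta_W^{it}$ lie in $\cj$, hence act geometrically and must realise $\Lambda_W(t)$; forming translations from boosts attached to lightlike-related wedges (Borchers' commutation geometry) and feeding in the positivity intrinsic to modular theory --- exactly the complex-analytic input of Theorem~\ref{Borch92} --- places the translation generators in a forward light cone, the ``up to a sign'' reflecting the inability of the abstract data to orient time. Under CMG with transitive action one re-runs the construction with the one-parameter groups $\Delta_W^{it}$ in place of the $J_W$: they generate a homomorphic image of the universal cover $\widetilde{\cp^\uparrow_+}$ (a cover, since the modular flows carry no a priori periodicity), strong continuity being automatic from continuity of the modular groups plus connectedness of the generated group, and one gets $\Delta_W^{it}=U(\Lambda_W(t))$ and $U(g)\car(W)U(g)^*=\car(gW)$. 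Adjoining locality lets one rerun the first three paragraphs so that the $J_W$ also act as $r_W$, whence $U$ extends to $\cp_+$ and the spectrum condition follows from the positivity argument. The crux throughout --- and the main obstacle --- is the geometric and group-theoretic rigidity used in the second and third paragraphs (that $\mathrm{Aut}(\cw)=\cp_+$, that the wedge reflections are the unique covariant transitive family of involutions, and that $\ker p$ is trivial); this is the technical heart of the Buchholz--Dreyer--Florig--Summers analysis, after which the rest is bookkeeping together with a Borchers-type argument of the kind already carried out for Theorem~\ref{Borch92}.
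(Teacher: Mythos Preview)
The paper does not contain a proof of this theorem: it is stated with attribution to \cite{BDFS,BuSu2} and followed only by the remark that ``the result just quoted was indeed stated and proved in a far more general setting''. There is therefore no proof in the paper to compare your proposal against.

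That said, your sketch is a faithful outline of the Buchholz--Dreyer--Florig--Summers strategy: pass from algebras to an action on the index set $\cw$ via CGMA, invoke the rigidity $\mathrm{Aut}(\cw)\cong\cp_+$, identify the $\beta_{W_0}$ with the geometric reflections $r_{W_0}$ by the uniqueness of a covariant transitive family of involutions, and lift back to an (anti)unitary representation. The points you flag as the ``technical heart'' (the computation of $\mathrm{Aut}(\cw)$, the uniqueness of the reflection family, and the triviality of $\ker p$) are indeed where the substantial work lies in the original papers, and you correctly do not pretend to carry them out. One caution: your claim that any element of $\ker p$ ``is shown to be a scalar'' is not quite the argument used --- in \cite{BDFS} one rather shows that the central subgroup is at most $\Z_2$ and then rules out the nontrivial case using the net structure; the scalar conclusion does not follow merely from commuting with all $J_W$. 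Likewise, the passage from Modular Stability to the spectrum condition is more delicate than a direct appeal to Theorem~\ref{Borch92}: one must first establish that the $\Delta_W^{it}$ act as the boosts $\Lambda_W(t)$ (not some other one-parameter subgroup), which again uses the rigidity analysis. These are refinements rather than errors in your overall architecture.
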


Let us remark that the result just quoted was indeed stated and proved in a far more general setting, in order to treat general space-times and general symmetry groups, via suitable family of "wedge like" regions.

A direct analysis of the de Sitter space in terms of a suitable thermal behavior of the vacuum vector is contained in \cite{BoBu}.

We conclude this subsection by mentioning some important results due to Borchers and Wiesbrock, of particular interest in the case of conformal theories.

As already mentioned, for conformally covariant theories Bisognano-Wichmann relations hold (cf. Theorem \ref{confBW}). Chiral two dimensional theories split in a tensor product of two nets on a line, and any such net is covariant w.r.t. the group of fractional linear transformations (M\"{o}bius group). Therefore, any such net extend to a net on the one-point compactification, namely a net on the circle.

Such nets are given by a map $I\to\car(I)$ from the open intervals of the circle to von Neumann algebras acting on a Hilbert space, verifying the suitably modified Haag-Kastler axioms, with the space-like complement replaced by the interior of the complement, the symmetry group replaced by the M\"{o}bius group, and the spectrum condition by the positivity of the generator of rotations (cf. e.g. \cite{Longobook} for a thorough description of the properties of conformal nets on the circle).

Under these hypotheses $\Omega$ is cyclic and separating for the algebras $\car(I)$, and the Bisognano-Wichmann relations hold, namely $\Delta_I^{it}=U(\Lambda_I(t))$, where $\Lambda_I(t)$ is the unique one-parameter subgroup of the M\"obius group leaving $I$ globally invariant, the re-scaling being fixed by the KMS condition, or, equivalently, by the Borchers relations.

As already mentioned, we may also recover the M\"obius symmetry assuming modular covariance for the modular groups plus Reeh-Schlieder property.

With the following result, Wiesbrock showed that one may reconstruct the covariant net itself starting with few algebras. Indeed, assume we have a conformal net on the line, and consider the algebras $\cam:=\car(-1,1)$ and $\cn:=\car(0,\infty)$. It turns out that $J_\cam$ implements the ray inversion map, hence preserves $\cn$,
therefore 
\begin{equation}\label{HWW1}
J_\cam\cn J_\cam=\cn.
\end{equation} 
 $\Delta_\cam^{it}$ 
moves points in $(-1,1)$ toward 1 for negative $t$, 
 therefore, since $\cam\cap\cn=\car(0,1)$, 
\begin{equation}\label{HWW2}
\Delta_\cam^{it}(\cam\cap\cn)\Delta_\cam^{-it}\subset\cam\cap\cn,\quad t\leq0.
\end{equation} 
Finally,  $\Delta_\cn^{it}$ 
 implements contractions for positive $t$, 
therefore
\begin{equation}\label{HWW3}
\Delta_\cn^{it}(\cam\cap\cn)\Delta_\cn^{-it}\subset\cam\cap\cn,\quad t\geq0.
\end{equation} 

\begin{theorem}[\cite{Wies1}]\label{confreconstruction}
Assume we have two von~Neumann algebras acting on a Hilbert space $\ch$ with a common cyclic and separating vector $\Omega$. If relations (\ref{HWW1}), (\ref{HWW2}), (\ref{HWW3}), hold, these data reconstruct in a canonical way a M\"{o}bius covariant net of von~Neumann algebras on the circle.
\end{theorem}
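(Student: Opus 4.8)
The plan is to recognise the two hypotheses (\ref{HWW2}) and (\ref{HWW3}) as \emph{half-sided modular inclusions} and to use the Borchers--Wiesbrock analysis of such inclusions to manufacture, out of $\cam$, $\cn$, $J_\cam$ and the two modular groups, a (anti-)unitary positive-energy representation $U$ of the M\"obius group $PSL(2,\R)$ acting geometrically; the net is then recovered by transporting $\cam$ around the circle with $U$ and checking the axioms.

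First I would put $\cl:=\cam\cap\cn$ and check that $\Omega$ is cyclic, hence standard, for $\cl$ (this being part of the standing assumptions in results of this type). Then (\ref{HWW3}) exhibits $\cl\subset\cn$ as a $+$half-sided modular inclusion with respect to $\Omega$, and (\ref{HWW2}) exhibits $\cl\subset\cam$ as a $-$half-sided modular inclusion. Invoking the theorem of Borchers and Wiesbrock on half-sided modular inclusions --- which rests on precisely the kind of Hardy-space/edge-of-the-wedge argument used in the proof of Theorem \ref{Borch92} above, namely that a von~Neumann algebra with cyclic separating $\Omega$, together with a positive-energy one-parameter unitary group fixing $\Omega$ and implementing endomorphisms in one direction, forces the Borchers commutation relations, and that such ``Borchers pairs'' are exactly the half-sided modular inclusions --- I would extract two strongly continuous one-parameter unitary groups $U_{+}(s)$, $U_{-}(s)$ with positive generators, fixing $\Omega$, obeying
\[
\Delta_\cam^{it}\,U_{+}(s)\,\Delta_\cam^{-it}=U_{+}(e^{-2\pi t}s),\qquad U_{+}(1)\,\cam\,U_{+}(-1)=\cl ,
\]
and the symmetric relations for $U_{-}$ relative to $\cn$. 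Thus $(\Delta_\cam^{it},U_{+}(s))$ generates an ``$ax+b$'' group, likewise $(\Delta_\cn^{it},U_{-}(s))$, and the two overlap in the common modular group $\Delta_\cl^{it}$ of $\cl$, which each structure expresses in its own terms.

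Next I would combine these pieces. Using (\ref{HWW1}), which says that $J_\cam$ normalises $\cn$ and so has the correct geometric action --- intertwining the two translation groups up to inversion, $J_\cam U_{+}(s)J_\cam=U_{-}(-s)$ in the appropriate normalisation --- I would verify that the unitaries $\{\Delta_\cam^{it},U_{+}(s),U_{-}(s),J_\cam\}$ satisfy exactly the defining relations of $PSL(2,\R)$ (equivalently, of the Bisognano--Wichmann one-parameter groups and reflections attached to two ``transverse'' intervals of the circle). This produces $U$, whose generator of rotations is a positive combination of the positive generators of $U_{+}$ and $U_{-}$, hence positive. Fixing the reference interval $I_0$ corresponding to $\cam$ in the motivating picture, I would then define $\car(I):=U(g)\cam\,U(g)^{*}$ whenever $gI_0=I$; this is well posed because the stabiliser of $I_0$ in the M\"obius group is the dilation subgroup, whose unitaries $\Delta_\cam^{it}$ fix $\cam$. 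Isotony and M\"obius covariance are then immediate; cyclicity of $\Omega$ for each $\car(I)$ follows from cyclicity for $\cam$ and invariance of $\Omega$ under $U$; and since $J_\cam$ is the modular conjugation of $(\cam,\Omega)$ implementing the reflection that exchanges $I_0$ with its complement, one obtains $\car(I_0')=\car(I_0)'$ and, after transport by $U$, Haag duality $\car(I)'=\car(I')$ for every interval, hence locality.

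The hard part is the two preliminary steps: the half-sided modular inclusion analysis, and --- above all --- checking that $\{\Delta_\cam^{it},\Delta_\cn^{it},J_\cam\}$ (equivalently, together with the $U_\pm$) close up into a representation of $PSL(2,\R)$ rather than of some strictly larger group. The first is the Borchers--Wiesbrock theorem, proved by delicate complex-analytic arguments of the same flavour as the proof of Theorem \ref{Borch92}; the second amounts to pinning down the single missing commutation relation between $U_{+}$ and $U_{-}$, and here (\ref{HWW1}) is indispensable, since it is what forces $J_\cam$ to be a geometric reflection and thereby fixes the relative normalisation of the two ``$ax+b$'' subgroups.
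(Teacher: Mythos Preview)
The paper does not give its own proof of this theorem: it is stated with attribution to \cite{Wies1}, and the only comment on the argument is the remark immediately following it, namely that the proof ``was based on a result of the same author, whose proof however had a gap,'' the gap being closed later by Araki--Zsid\'o \cite{ArZs} (cf.\ also \cite{Florig}). That ``result of the same author'' is precisely the structure theorem for half-sided modular inclusions that you invoke as your main tool. So your sketch is in line with what the paper indicates and with Wiesbrock's original strategy: recognise (\ref{HWW2}) and (\ref{HWW3}) as $\pm$half-sided modular inclusions of $\cl=\cam\cap\cn$ in $\cam$ and $\cn$, extract the two positive-generator one-parameter groups via the Borchers--Wiesbrock mechanism, and use (\ref{HWW1}) to close up into a representation of $PSL(2,\R)$, then transport $\cam$ to define the net.

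Two small cautions. First, cyclicity of $\Omega$ for $\cl=\cam\cap\cn$ is genuinely an extra input, not something you can read off directly from the stated hypotheses on $\cam$ and $\cn$ separately; you are right to flag it as a standing assumption. Second, the step you correctly call ``the hard part'' --- that the unitaries actually close up to $PSL(2,\R)$ and not a larger group --- is exactly the part where Wiesbrock's original argument was incomplete; the rigorous treatment is in \cite{ArZs}, and you should cite that rather than treat it as a routine verification.
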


Let me just remark that the proof of previous theorem was based on a result of the same author, whose proof however had a gap. Such gap was filled by a subsequent paper by Araki-Zsido \cite{ArZs}, cf. also \cite{Florig}.

Similar techniques have been used by Wiesbrock and collaborators to recover Poincar\'e covariant nets on the 3-dimensional and 4-dimensional Minkowski space from few algebras with suitable relations \cite{Wies2,KaeWie}.

\subsection{Modular localization}\label{modloc}

This last section is devoted to the observation that, in the case of free fields, the whole net may be reconstructed via the representation of the symmetry group, in terms of one-particle modular operators.
For related results and applications we refer to \cite{FaSc} and references quoted therein.

As we have seen in section \ref{locstruc}, in the one-particle Hilbert space we have a map $\co\to K(\co)$ which associates a standard vector space $K(\co)$ with any double cone $\co$. Since $K(\co)$ is standard, we may define its modular operators $J_\co$, $\Delta_\co$. When wedge regions are considered, Bisognano-Wichmann relations (\ref{1pBW}) hold. 

Now suppose a strongly continuous, (anti)-unitary, positive energy representation ${\uni}$ of the proper Poincar\'e group is given, acting on a Hilbert space $\ch$. We may use Bisognano-Wichmann relations as a prescription: first we associate with any wedge region $W$ the corresponding reflection $r_W$ and one-parameter group $\Lambda_W(t)$, then set $j_W={\uni}(r_W)$, $\delta_W^{it}={\uni}(\Lambda_W(t))$.

With this definition, $j_W$ is anti-linear and commutes with $\delta_W^{it}$, therefore it anti-commutes with $\delta_W^{1/2}$. Hence, setting $s_W=j_W\delta^{1/2}$ we get an anti-linear closed operator such that $s_W^2\subset I$. We may therefore set
\begin{equation}\label{modSsp}
K_W:=\{h\in\cd(s_W):s_Wh=h\}.
\end{equation}

The following result holds.

\begin{theorem}[\cite{BGL3}]
Assume we are given a strongly continuous, (anti)-unitary, positive energy representation ${\uni}$ of the proper Poincar\'e group, acting on a Hilbert space $\ch$.
For any wedge $W$, the space $K(W)$ constructed as above is standard, with Tomita operator $s_W$. The family $W\to K(W)$ satisfies the following properties:
\begin{itemize}
\item[(isotony)]\quad
$W_1\subset W_2\Rightarrow K(W_1)\subset K(W_2)$;
\item[(duality)]\quad
$ K(W')=K(W)'$;
\item [(covariance)]\quad ${\uni}(g)K(W)=K(gW)$, $ g\in \cp_+$. 
\end{itemize}
By definition, the Bisognano-Wichmann relations hold.
\end{theorem}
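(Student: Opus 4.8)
The plan is to verify the three listed properties in turn, using the group-theoretic relations among the wedge reflections and boost subgroups, the standardness result already established in Proposition \ref{1pTTthm}, and the covariance of the assignment $W\mapsto s_W$ which is almost immediate from the definitions. The key structural input is that the map $W\mapsto(r_W,\Lambda_W(\cdot))$ from wedges to Poincaré transformations is itself covariant, $r_{gW}=g\,r_W\,g^{-1}$ and $\Lambda_{gW}(t)=g\,\Lambda_W(t)\,g^{-1}$, which is exactly how these data were defined for general wedges $W=gW_R$. Applying the representation ${\uni}$ and using that ${\uni}$ is a homomorphism on $\cp_+$ (with the understanding that ${\uni}(g)$ may be anti-linear), this gives $j_{gW}={\uni}(g)j_W{\uni}(g)^*$ and $\delta_{gW}^{it}={\uni}(g)\delta_W^{it}{\uni}(g)^*$, hence $s_{gW}={\uni}(g)s_W{\uni}(g)^{-1}$; reading off the $+1$-eigenspaces yields ${\uni}(g)K(W)=K(gW)$, which is the covariance statement.

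First I would establish standardness. By Proposition \ref{1pTTthm}, a closed standard subspace corresponds exactly to a closed anti-linear $s$ with $\cd(s)=Rg(s)$ and $s^2=1$; conversely any anti-linear closed $s$ with $s^2\subseteq I$ and with $\cd(s)$ dense defines, via its $+1$-eigenspace, a closed standard subspace whose Tomita operator is $s$. So I must check that $s_W=j_W\delta_W^{1/2}$ is densely defined and closed with $s_W^2\subseteq I$. Closedness follows since $\delta_W^{1/2}$ is a positive self-adjoint operator (functional calculus of the self-adjoint generator of the boost one-parameter unitary group ${\uni}(\Lambda_W(t))$) and $j_W$ is a bounded anti-unitary; density of $\cd(s_W)=\cd(\delta_W^{1/2})$ is automatic. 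For $s_W^2\subseteq I$ one uses $j_W\delta_W^{it}j_W=\delta_W^{-it}$, which is the content of the group relation $r_W\Lambda_W(t)r_W=\Lambda_W(-t)$ transported by ${\uni}$ (noting $j_W={\uni}(r_W)$ is anti-linear, so it also conjugates $i$ to $-i$, giving the sign flip on $\delta_W^{1/2}$); together with $j_W^2=I$ this yields $s_W^2=j_W\delta_W^{1/2}j_W\delta_W^{1/2}=\delta_W^{-1/2}\delta_W^{1/2}\subseteq I$. Hence $K(W)$ is standard with Tomita operator $s_W$, and its modular objects are precisely $(j_W,\delta_W)$; this last point is what is meant by ``the Bisognano-Wichmann relations hold by definition.''

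Next, isotony. If $W_1\subset W_2$ are wedges, I would use that the commutant/complement operation on standard subspaces reverses inclusions, so it suffices to prove duality first and then deduce isotony from the geometry; alternatively, and more directly, one shows $K(W_1)\subset K(W_2)$ by exhibiting, via Poincaré transformations, a common sub-wedge picture, or by invoking the explicit known inclusion relations among the one-parameter boost groups. The cleanest route is to prove duality $K(W')=K(W)'$ and then obtain isotony as a corollary: $W_1\subset W_2$ implies $W_2'\subset W_1'$, and if one has an a priori monotonicity in one direction one gets it in both. For duality itself, the geometric fact to use is $r_{W'}=r_W$ and $\Lambda_{W'}(t)=\Lambda_W(-t)$ (the reflection of a wedge is the same reflection, while the boosts run backwards), so $j_{W'}=j_W$ and $\delta_{W'}^{it}=\delta_W^{-it}$, whence $s_{W'}=j_W\delta_W^{-1/2}=s_W^*$ by the polar-decomposition identities $s_W^*=j_W\delta_W^{-1/2}$ derived exactly as in the proof of Proposition \ref{1pTTthm}(i). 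Since $s_{W'}=s_W^*$, Proposition \ref{1pTTthm}(i) gives $K(W')=K(W)'$.

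The main obstacle I anticipate is isotony: unlike covariance and duality, which are pure transport-of-structure consequences of the homomorphism property of ${\uni}$ and the geometry of the map $W\mapsto(r_W,\Lambda_W)$, inclusion $K(W_1)\subset K(W_2)$ for $W_1\subset W_2$ is \emph{not} visible from a single wedge's modular data; it genuinely requires comparing the operators $s_{W_1}$ and $s_{W_2}$, whose boost generators do not commute. The standard device (due to the referenced work \cite{BGL3}, ultimately resting on the positive-energy condition on ${\uni}$ and a Borchers-type commutation relation between the boosts of $W_2$ and certain light-like translations that map $W_2$ into $W_1$) is to write $W_1$ as an intersection of translates of $W_2$ by translations in a light-like direction for which ${\uni}$ restricts to a positive-generator one-parameter group, and then to run the Borchers argument (Theorem \ref{Borch92}) at the one-particle level to conclude that such a translation sends $K(W_2)$ into itself; intersecting over the relevant translations yields $K(W_1)\subset K(W_2)$. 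This is where essentially all the analytic content (positivity of energy, edge-of-the-wedge / Liouville as in Florig's proof of Borchers' theorem) enters, and it is the step I would expect to occupy the bulk of a complete write-up.
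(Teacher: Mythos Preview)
Your proposal is correct and follows essentially the same route as the paper: covariance and the Bisognano--Wichmann relations are direct from the definition; duality follows from $r_{W'}=r_W$ and $\Lambda_{W'}(t)=\Lambda_W(-t)$ (equivalently $j_{W'}=j_W$, $\delta_{W'}^{it}=\delta_W^{-it}$), giving $s_{W'}=s_W^*$ and hence $K(W')=K(W)'$; and isotony is the only analytically nontrivial step, resting on positive energy via a one-particle converse of Borchers' theorem.

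Two small points in your isotony discussion deserve tightening. First, the ``cleanest route'' you sketch---deducing isotony from duality by passing to complements---is circular: $W_1\subset W_2\Rightarrow W_2'\subset W_1'$ gives you nothing until you already know isotony for the primed wedges. Second, the phrase ``write $W_1$ as an intersection of translates of $W_2$'' is not quite the mechanism. Rather, by covariance one reduces to $W_2=W_R$ and $W_1=\tau_a W_R$ for a single $a$ in the closure of $W_R$; such $a$ decomposes as a sum of light-like translations (e.g.\ along $(1,1,0,0)$ and $(-1,1,0,0)$), each mapping $W_R$ into itself. The converse Borchers argument (positive generator plus the boost--translation commutation relations) shows each such ${\uni}(\tau_x)$ maps $K(W_R)$ into itself, and composing---not intersecting---gives $K(W_1)={\uni}(\tau_a)K(W_R)\subset K(W_R)$. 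This is exactly what the paper indicates when it says it suffices to treat $\tau_x W_R\subset W_R$ for $x=(t,t,0,0)$, $t\geq0$.
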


Let us observe that the properties of covariance and Bisognano-Wichmann are  quite direct consequences of the definitions. As for duality, it follows from the relations $J_W=J_{W'}$ and $\Delta_W^{it}=\Delta_{W'}^{-it}$. Finally, it is sufficient to prove isotony for the inclusion $\tau_x W_R\subset W_R$, where $\tau_x$ is a translation and $x=(t,t,0,0)$, $t\geq0$. The latter is a partial converse of the (one particle) Borchers theorem, the commutation relations, positivity of the generator of translations and cyclicity of the vacuum being assumed, and the endomorphic action of translations to be proved. We refer to \cite{BGL3} or to the recent general notes of Longo \cite{Longobook} for a proof.

\medskip

The net $W\to K(W)$ may be extended to general causally complete regions as follows:
\begin{align}
K(\co)&:=\bigcap_{W\supset\co}K(W)\quad \co {\rm\ convex},\label{convex}\\
K(\cc)&:=\bigcup_{\co\subset\cc}K(\co)\quad \co {\rm\ convex,}\ \cc {\rm\ not\ convex.}\label{nonconv}
\end{align}
Clearly the properties of isotony, locality, and covariance hold for this extended net. Duality for double cones holds too, as in Corollary \ref{essdual}. The non-trivial property is the standard property for $K(\co)$. One may anyway use the second quantization functor and construct the algebras on the Fock space as above: $\car(\co)=\{W(h):h\in K(\co)\}''$.

In fact, Theorem \ref{thm:1pBW} shows that the usual free fields may be alternatively reconstructed via {\it modular localization}. However, not all irreducible positive energy representations of the Poincar\'e group are considered in physics. A subfamily of them, called infinite spin representations, have always been considered as non-physical, cf. e.g. \cite{Yng1}, where it is shown that the construction of free fields associated with these representations is not allowed. The procedure described above however allows the construction of a free field net for these representations. Indeed, for infinite spin representation, it is conjectured that $K(\co)$ is not standard for double cones. However the following holds.

\begin{theorem}[\cite{BGL3}]
Let $\cc$ be a  {\em space-like cone}, namely a convex cone generated by a point and a double cone which are space-like separated.

With the assumptions above, the space $K(\cc)$ is standard. If ${\uni}$ does not contain the trivial representation, the second quantization algebra $\car(\cc)$ is a type III$_{1}$ factor.
\end{theorem}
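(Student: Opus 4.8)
The assertion has two layers: standardness of the real subspace $K(\cc)$, and the structure of $\car(\cc)=\car(K(\cc))$. The separating half of standardness is for free: a space-like cone is contained in some wedge $W$, so by \eqref{convex} $K(\cc)\subseteq K(W)$, and since $K(W)$ is standard by the previous theorem, $K(\cc)\cap iK(\cc)\subseteq K(W)\cap iK(W)=\{0\}$. Hence everything reduces to showing $K(\cc)$ cyclic, i.e.\ $\overline{K(\cc)+iK(\cc)}=\ch$, equivalently that $0$ is the only vector $\C$-orthogonal to $K(\cc)$. The point to bear in mind, and the reason the argument cannot be routine, is that --- in contrast with the wedge case --- this cannot be deduced by squeezing the cyclic subspace of some \emph{bounded} region into $\cc$: for the infinite spin representations $K(\co)$ is expected to be non-standard for bounded $\co$, so the unbounded, cone-shaped geometry of $\cc$ must be exploited in an essential way. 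This is the main obstacle.

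The plan for cyclicity is to realize $\cc$ as an intersection of wedges, $\cc=\bigcap_{W\supseteq\cc}W$, so that $K(\cc)=\bigcap_{W\supseteq\cc}K(W)$, and to keep the right-hand side under control by passing to a manageable subfamily --- a finite, or one-parameter, family of wedges $\{W_\lambda\}$ still cutting out $\cc$ --- chosen so that the associated modular data sit in ``modular position''. Concretely, using the light-like (or suitably space-like) translations $\tau$ that preserve the $W_\lambda$ one has $\tau_a W_\lambda\subseteq W_\lambda$ for $a\ge 0$, so by Borchers' theorem (Theorem \ref{Borch92}) --- this is where positivity of energy of ${\uni}$ is used --- the modular unitaries $\delta_{W_\lambda}^{it}$ rescale these translations and $j_{W_\lambda}$ reverses them; in particular $K(\tau_aW_\lambda)\subseteq K(W_\lambda)$ is a half-sided modular inclusion and the $W_\lambda$ are in modular intersection. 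From the structure theory of such configurations in the category of standard subspaces (the one-particle counterparts of the results behind Theorems \ref{Borch92} and \ref{confreconstruction}, cf.\ \cite{Florig,ArZs}), or, alternatively, by a direct Reeh--Schlieder type analyticity argument built on the covariance ${\uni}(\tau_x)K(W_\lambda)=K(W_\lambda+x)$ and the spectrum condition, one concludes that the intersection $K(\cc)$ is non-degenerate, hence standard with Tomita operator $s_\cc$. Verifying that a suitable finite (or one-parameter) family of wedges in modular position genuinely carves out $\cc$, and that this is enough to force non-degeneracy, is the technical heart of the argument.

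For the second part, assume ${\uni}$ has no trivial subrepresentation. Once $K(\cc)$ is known to be standard, Theorem \ref{thm:2qS} puts $\car(\cc)=\car(K(\cc))$ in standard form with respect to $\Omega$, with $\Delta_\cc=e^{\delta_\cc}$, $J_\cc=e^{j_\cc}$ and $s_\cc=j_\cc\delta_\cc^{1/2}$. By Theorem \ref{thm:lattices} the map $K\mapsto\car(K)$ is an isomorphism of complemented lattices, so the center of $\car(\cc)$ is $\car(K(\cc))\cap\car(K(\cc))'=\car(K(\cc))\cap\car(K(\cc)')=\car\bigl(K(\cc)\cap K(\cc)'\bigr)$, and by Proposition \ref{1pTTthm}(i) this equals $\car\bigl(\{x:\ j_\cc x=x,\ \delta_\cc x=x\}\bigr)$. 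Thus $\car(\cc)$ is a factor as soon as $1\notin\sigma_p(\delta_\cc)$, and a type III$_1$ factor as soon as, in addition, $\sigma(\log\delta_\cc)$ generates $\R$ as a closed subgroup --- this being the relevant case of the type classification of second quantization factors, an isolated point of the modular spectrum producing instead type I, II, or III$_\lambda$ with $\lambda<1$. The hypothesis on ${\uni}$ disposes of the evident obstruction, since a Poincar\'e-invariant vector would lie in $K(W)\cap K(W')$ for every wedge, hence in $K(\cc)\cap K(\cc)'$; to finish, I would use the modular relations of the previous step once more to push an eigenvector of $\delta_\cc$ down to a vector invariant under the boost one-parameter groups of the wedges $W_\lambda$, which is impossible in a positive-energy representation without trivial subrepresentation, and argue similarly for the density of $\sigma(\log\delta_\cc)$ from the richness of ${\uni}$. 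The delicate point is exactly this spectral analysis: for a space-like cone the modular group of $K(\cc)$ is \emph{not} implemented by Poincar\'e transformations, so its spectrum must be reached indirectly, through the wedge data, rather than read off from the geometry.
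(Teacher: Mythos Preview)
The paper does not contain a proof of this theorem: it is quoted from \cite{BGL3} without argument, so there is no ``paper's own proof'' to compare against. What I can do is assess your strategy against the actual proof in \cite{BGL3}.

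Your treatment of the separating half is correct and is exactly the trivial direction. Your identification of the cyclic half as the real issue, and your remark that bounded-region standardness cannot be invoked (because of the infinite-spin case), are both on target. However, the mechanism you propose for cyclicity --- reducing to a finite or one-parameter family of wedges in ``modular intersection'' and then appealing to the structure theory of half-sided modular inclusions --- is not how \cite{BGL3} proceeds, and as stated it is too vague to be a proof. In \cite{BGL3} the argument is organized by first decomposing ${\uni}$ into irreducibles and then treating the three families (massive, massless finite helicity, massless infinite spin) separately; in each case one exhibits, by explicit construction adapted to the little group, enough vectors in $K(\cc)$ to span $\ch$. The geometric input is that a space-like cone, unlike a double cone, contains translates of itself along a full light ray, and this is what allows a Reeh--Schlieder type analyticity argument to go through even when no bounded region gives a cyclic subspace. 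Your outline gestures at this (``light-like translations $\tau$ that preserve the $W_\lambda$'') but does not isolate the actual step.

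For the factor and type~III$_1$ assertions your reduction via Theorem~\ref{thm:lattices} and Proposition~\ref{1pTTthm}(i) to the one-particle statement $K(\cc)\cap K(\cc)'=\{0\}$ and to the spectrum of $\log\delta_\cc$ is correct in spirit, and the observation that a ${\uni}$-invariant vector would obstruct factoriality is right. But the route you sketch for the spectral statement --- pushing a hypothetical eigenvector of $\delta_\cc$ to a boost-invariant vector --- does not work as written, precisely for the reason you yourself name: $\delta_\cc^{it}$ is not geometric. The argument in \cite{BGL3} instead establishes the type~III$_1$ property first for wedges, where $\log\delta_W$ is the boost generator and its spectrum is all of $\R$ whenever ${\uni}$ has no invariant vectors, and then transfers this to $\car(\cc)$ via the inclusion $\car(\cc)\subset\car(W)$ together with the fact that the modular group of the larger algebra acts ergodically (indeed by translations in the parameter) on that of the smaller one; this is a standard half-sided modular inclusion argument, and it bypasses any direct spectral analysis of $\delta_\cc$.
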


 The quoted results, in particular the idea that for infinite spin representations local algebras for bounded regions may be trivial, but those for arbitrarily thin cones are not, suggested the construction of string localized fields for infinite spin representations \cite{MSY}. 

An interesting aspect of the previous construction is the possibility of generalizing it to more general space-times or symmetry groups. The main point is to identify a suitable family of regions endowed with a reflection and a one-parameter group with prescribed properties. One example is given by conformal theories: in this case the family is that of double cones; given any representation of the conformal group  for $M^d$ one may construct the associated free field algebras as above. The second example is the de Sitter space: in this case the family consists of the wedges considered in section \ref{desitter}; given a representation of the Lorentz group one may e.g. reconstruct the free field algebras described by Bros and Moschella \cite{BrMo}. While the method in \cite{BGL3} for the de Sitter space applies to all representations of the Lorentz group, the standard property for bounded regions was proved only for those representations which extend to a representation of  $\cp_+$ with non-trivial translations.


\providecommand{\bysame}{\leavevmode\hbox to3em{\hrulefill}\thinspace}

\end{document}